\documentclass{amsart}
\usepackage{xcolor}
\usepackage{hyperref}
\hypersetup{
	colorlinks=true,
	citecolor=blue,
	linkcolor=blue,
	filecolor=magenta,      
	urlcolor=cyan,
}
\usepackage[capitalise,noabbrev]{cleveref}
\usepackage[all, knot]{xy}\SelectTips{eu}{}

\usepackage{amsmath,amscd}
\usepackage{amssymb}
\usepackage{mathtools}
\usepackage{stmaryrd}
\usepackage{url}
\usepackage{tikz-cd}
\usepackage{enumitem}

\usepackage{fullpage}

\setlength{\parindent}{0cm}
\setlength{\parskip}{0.7ex}
\usepackage{setspace}
\setstretch{1.1}
\usepackage{microtype}

\usepackage{times}

\usepackage{tikz}

\usepackage{enumitem,kantlipsum}

\address{Department of Algebra, Faculty of Mathematics and Physics, Charles University in Prague, Sokolovsk\'a 83, 186 75 Praha, Czech Republic}

\email{shaul@karlin.mff.cuni.cz}

%
%
%


\newtheorem{thm}[equation]{Theorem}
\newtheorem*{thm*}{Theorem}
\newtheorem*{cor*}{Corollary}
\newtheorem*{dfn*}{Definition}

\newtheorem{cthm}{Theorem}
\newtheorem{ccor}[cthm]{Corollary}

\newtheorem{cor}[equation]{Corollary}
\newtheorem{prop}[equation]{Proposition}

\theoremstyle{definition}
\newtheorem{dfn}[equation]{Definition}
\newtheorem{rem}[equation]{Remark}


\newcommand{\opn}{\operatorname}
\newcommand{\cat}[1]{\operatorname{\mathsf{#1}}}

\newcommand{\mfrak}[1]{\mathfrak{#1}}

\newcommand{\mrm}[1]{\mathrm{#1}}
\newcommand{\mbb}[1]{\mathbb{#1}}

\newcommand{\K}{\mbb{K} \hspace{0.05em}}

\newcommand{\p}{\mfrak{p}}

\newcommand{\injdim}{\operatorname{inj\,dim}}
\newcommand{\projdim}{\operatorname{proj\,dim}}
\newcommand{\flatdim}{\operatorname{flat\,dim}}

\def\skewtimes{\ltimes\!}

\newcommand{\op}{\opn{op}}


\title{Acyclic complexes of injectives and finitistic dimensions}
\author{Liran Shaul \\ \\ with an appendix by Tsutomu Nakamura and Peder Thompson}

\begin{document}
\begin{abstract}
For a ring $A$, 
we consider the question whether
every bounded above cochain complex of injective $A$-modules which is acyclic is null-homotopic.
We show that if $A$ is left and right noetherian and has a dualizing complex, 
then this implies that the finitistic dimension of $A$ is finite.
In the appendix, Nakamura and Thompson show that the opposite holds over any ring.
Our results give several new necessary and sufficient conditions for a ring to have finite finitistic dimension in a very general setting. Applications include a generalization of a recent result of Rickard about relations between unbounded derived categories and finitistic dimension,
as well as several new characterizations of noetherian rings which satisfy the Gorenstein symmetry conjecture.
\end{abstract}

\numberwithin{equation}{section}
\maketitle

\setcounter{section}{-1}
\section{Introduction}

In this paper rings are associative and unital,
modules are by default left modules,
and complexes are indexed cohomologically.
Given a ring $A$, 
our goal is to gain a better understanding of homological numerical invariants associated to it.
Specifically, we are interested in the (big) projective, injective and flat finitistic dimensions of $A$.
The big projective finitistic dimension of $A$ is given by 
\[
\opn{FPD}(A) := \sup\{\projdim_A(M)\mid M\in \opn{Mod}(A), \projdim_A(M) < \infty\},
\]
and the injective and flat finitistic dimensions, 
denoted by $\opn{FID}(A)$ and $\opn{FFD}(A)$ are defined similarly.
The question whether these invariants are finite for artin algebras is known as the big finitistic dimension conjecture,
and is considered one of the most important open problems in homological algebra. See the survey \cite{Zim} for history of this problem.

One of the most basic facts in homological algebra,
which is used in the construction of derived functors,
is the fact that over any ring $A$, every bounded below cochain complex of injective $A$-modules which is acyclic is null-homotopic.
Equivalently, any quasi-isomorphism between two bounded below complexes of injective $A$-modules is a homotopy equivalence.
This basic fact implies the uniqueness, up to a homotopy equivalence, of an injective resolution of a given module.
On the projective side, over any ring, 
every bounded above cochain complex of projective $A$-modules which is acyclic is null-homotopic.

In this paper we consider statements dual to these.
That is, we consider the question,
given a ring $A$, whether every bounded above complex of injective $A$-modules which is acyclic is null-homotopic.
Similarly, we ask, whether every bounded below complex of projective $A$-modules which is acyclic is null-homotopic.

Our main result shows that for left and right noetherian rings with a dualizing complex, these questions are equivalent to 
the question of whether finitistic dimensions are finite.
The notion of a dualizing complex, first introduced by Grothendieck \cite{RD} in algebraic geometry, and by Yekutieli \cite{Yek} in non-commutative algebra is essential in our proof techniques. Examples show that in the absence of it, our main results do not hold.
Most noetherian rings that arise in nature,
and in particular, finite dimensional algebras over a field possess dualizing complexes. 

To state our main results, we first recall the notion of a localizing (respectively colocalizing) subcategory of the unbounded derived category $\cat{D}(A)$,
where $A$ is a ring.
Given a non-empty set $\mathbf{S}$ of objects in $\cat{D}(A)$,
the localizing (resp. colocalizing) subcategory generated by $\mathbf{S}$,
denoted by $\opn{Loc}(\mathbf{S})$ (resp. $\opn{Coloc}(\mathbf{S})$),
is by the definition the smallest full triangulated subcategory of $\cat{D}(A)$ which contains $\mathbf{S}$ and is closed under infinite coproducts (resp. products). We will only be interested in the cases where $\mathbf{S}$ is either $\opn{Proj}(A)$, the collection of all projective $A$-modules, $\opn{Inj}(A)$, the collection of all injective $A$-modules, or $\opn{Flat}(A)$, the collection of all flat $A$-modules.

As our first main result, we show in \cref{thm:acyclic}:
\begin{cthm}\label{cthmA}
Let $A$ be a left and right noetherian ring which has a dualizing complex.
Assume that $\opn{Coloc}(\opn{Proj}(A)) = \cat{D}(A)$.
Then every bounded above cochain complex of injective $A$-modules which is acyclic is null-homotopic.
\end{cthm}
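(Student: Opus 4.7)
Let $I^\bullet$ be a bounded above acyclic complex of injective $A$-modules, say with $I^n = 0$ for $n > N$, and set $Z^n = \ker(d^n)$. Acyclicity gives short exact sequences $0 \to Z^n \to I^n \to Z^{n+1} \to 0$, and the tails $0 \to Z^n \to I^n \to \cdots \to I^N \to 0$ exhibit each $Z^n$ with finite injective dimension at most $N - n$.

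The first step of my plan is the elementary reformulation that $I^\bullet$ is null-homotopic if and only if every $Z^n$ is injective. Given a null-homotopy $s$, the composition $d^{n-1}s^n$ is a retraction $I^n \to Z^n$, making $Z^n$ a direct summand of the injective $I^n$; conversely, if each $Z^n$ is injective, then as an injective submodule it splits off $I^n$, and the resulting decompositions $I^n \cong Z^n \oplus Z^{n+1}$ assemble into an explicit contracting homotopy. The question thus reduces to showing every $Z^n$ is injective.

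The crucial technical ingredient is a uniform bound $\injdim(Z^n) \leq d < \infty$ for all $n$. Granting this, for any fixed $m$ the module $Z^{m-d}$ has injective dimension at most $d$, and its $d$-th cosyzygy along the coresolution $0 \to Z^{m-d} \to I^{m-d} \to \cdots$ coming from $I^\bullet$ equals $Z^m$; since the $d$-th cosyzygy of any module of injective dimension at most $d$ is injective, it follows that $Z^m$ is injective for every $m$. So it suffices to show $\opn{FID}(A) < \infty$, any value $d \geq \opn{FID}(A)$ then providing the required uniform bound.

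The main obstacle is therefore to deduce $\opn{FID}(A) < \infty$ from the hypothesis $\opn{Coloc}(\opn{Proj}(A)) = \cat{D}(A)$, and this is where the dualizing complex $R$ is essential. The plan is to apply $\RHom_A(-, R)$ to dualize the assumption: this contravariant functor sends products to coproducts and carries projective $A$-modules to bounded complexes built from injective $A^{op}$-modules, translating the colocalizing-generation hypothesis over $A$ into a localizing-generation condition in $\cat{D}(A^{op})$. An unbounded-derived-category argument in the spirit of Rickard's theorem on the finitistic dimension should then extract finiteness of $\opn{FID}(A)$ from such a generation condition. Carefully executing this dualization, in particular identifying precisely which generating set emerges on the $A^{op}$-side and verifying that it is strong enough to force finiteness of the finitistic dimension, is the technical heart of the argument.
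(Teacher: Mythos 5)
Your elementary reduction is correct and is exactly the argument used by Nakamura and Thompson in Theorem~\ref{FID_fin_nh} of the appendix: null-homotopy of a bounded above acyclic complex of injectives is equivalent to injectivity of all cycle modules, each cycle module has finite injective dimension by truncating the complex, and a uniform bound $d=\opn{FID}(A)$ forces each $Z^m$ to be injective since it is a $d$-th cosyzygy of a module of injective dimension at most $d$. So you have correctly reduced the claim to showing $\opn{Coloc}(\opn{Proj}(A))=\cat{D}(A)\implies\opn{FID}(A)<\infty$.

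The gap is in that reduction's target, which you rightly flag as the technical heart but whose sketch does not work as proposed. Applying $\RHom_A(-,R)$ to dualize the colocalizing hypothesis runs into the problem that $\RHom_A(-,R)$ is \emph{not} an equivalence of unbounded derived categories; the usual Grothendieck duality $\RHom_A(-,R):\cat{D}^{\mrm{b}}_{\mrm{f}}(A)\to\cat{D}^{\mrm{b}}_{\mrm{f}}(A^{\op})$ is only an anti-equivalence on the bounded complexes with finitely generated cohomology, and on all of $\cat{D}(A)$ it is neither fully faithful nor essentially surjective. Consequently the ``image'' of $\opn{Coloc}(\opn{Proj}(A))$ is not a (co)localizing subcategory of $\cat{D}(A^{\op})$ in any useful sense, and there is no immediate way to convert the hypothesis into a localizing-generation statement over $A^{\op}$ to which a Rickard-type theorem could be applied. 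The paper's proof avoids this entirely: it works inside the homotopy categories $\cat{K}(\opn{Inj}(A))$ and $\cat{K}(\opn{Proj}(A))$ where the \emph{covariant} Grothendieck duality $R\otimes_A-$ (Iyengar--Krause) genuinely is an equivalence, and argues by contradiction that a non-contractible acyclic bounded above complex of injectives $D$ would produce, via $q\circ\opn{Hom}_A(R,-)$, a nonzero K-projective complex whose $\cat{D}(A)$-orthogonal is a proper colocalizing subcategory containing all projectives. Moreover, the logical order in the paper is the reverse of yours: it first proves the null-homotopy statement (Theorem~\ref{thm:acyclic}) and only afterwards deduces $\opn{FID}(A)<\infty$ (Theorem~\ref{thm:FPD}), so using finiteness of $\opn{FID}(A)$ as the key lemma would require you to supply an independent Rickard-style proof of $\opn{Coloc}(\opn{Proj}(A))=\cat{D}(A)\implies\opn{FID}(A)<\infty$, which your sketch does not yet provide.
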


Specializing \cref{cthmA} to commutative rings,
we obtain in \cref{cor:commutative} the following unconditional result:

\begin{ccor}
Let $A$ be a commutative noetherian ring which has a dualizing complex.
Then every bounded above cochain complex of injective $A$-modules which is acyclic is null-homotopic.
\end{ccor}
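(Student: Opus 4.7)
The plan is to derive this corollary directly from \cref{cthmA} by verifying that its hypothesis $\opn{Coloc}(\opn{Proj}(A)) = \cat{D}(A)$ holds automatically whenever $A$ is a commutative noetherian ring admitting a dualizing complex.

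First, I would use the standard fact that the existence of a dualizing complex over a commutative noetherian ring forces the Krull dimension $d := \dim A$ to be finite (a consequence of the finite injective dimension of such a complex together with Bass's formula). By the Raynaud--Gruson theorem, every flat $A$-module then has projective dimension at most $d$. Consequently each flat module is assembled from at most $d+1$ projective modules via iterated distinguished triangles, so $\opn{Flat}(A) \subseteq \opn{Coloc}(\opn{Proj}(A))$, and the problem reduces to establishing $\opn{Coloc}(\opn{Flat}(A)) = \cat{D}(A)$.

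To prove this flat version, I would invoke Neeman's classification of colocalizing subcategories of $\cat{D}(A)$ for commutative noetherian $A$, according to which such subcategories correspond to subsets of $\opn{Spec}(A)$ via cosupport. It thus suffices to exhibit inside $\opn{Coloc}(\opn{Flat}(A))$ complexes whose cosupports cover the whole spectrum, for instance the residue fields $\kappa(\p) = A_\p/\p A_\p$ for every $\p \in \opn{Spec}(A)$. Since each localization $A_\p$ is itself a flat $A$-module, a Koszul complex on a finite generating set of $\p A_\p$ produces $\kappa(\p)$ (up to a finite tower of triangles) inside $\opn{Coloc}(\opn{Flat}(A))$.

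The main obstacle will be this cosupport step: although each residue field is accessible by a bounded Koszul construction from flat modules, reassembling \emph{every} object of $\cat{D}(A)$ from residue fields under \emph{products} and triangles (as opposed to the more familiar coproducts) requires care. The finite Krull dimension inherited from the dualizing complex is what makes this feasible, since it bounds the depth of the required filtrations and ensures convergence of the cohomological arguments underlying the cosupport classification. Once this is in hand, \cref{cthmA} applies verbatim and yields the desired vanishing of bounded above acyclic complexes of injectives.
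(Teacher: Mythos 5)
Your overall strategy is the same as the paper's: apply \cref{cthmA} after verifying $\opn{Coloc}(\opn{Proj}(A)) = \cat{D}(A)$, and reduce the latter (via finite projective dimension of flat modules) to $\opn{Coloc}(\opn{Flat}(A)) = \cat{D}(A)$. You should be aware, though, that the paper's \emph{primary} route to this last fact is much shorter than what you propose: Rickard's theorem gives $\opn{Loc}(\opn{Inj}(A)) = \cat{D}(A)$ for any commutative noetherian ring, and then \cref{thm:injImplyFlat} (using $A = A^{\op}$ and coherence) immediately yields $\opn{Coloc}(\opn{Flat}(A)) = \cat{D}(A)$. The paper only mentions Neeman's classification of colocalizing subcategories as an alternative.

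Your alternative via cosupport is also mentioned in the paper, but there are two issues. First, your choice of object is not quite right: you say a Koszul complex on generators of $\p A_\p$ ``produces $\kappa(\p)$ up to a finite tower of triangles,'' but this is only true when $A_\p$ is regular; in general the Koszul complex has cohomology in several degrees and $\kappa(\p)$ is not in the thick subcategory it generates. What you actually want is an object of $\opn{Coloc}(\opn{Flat}(A))$ whose cosupport (in the sense of Benson--Iyengar--Krause) is $\{\p\}$; the paper's choice, the $\p$-adic completion $\widehat{A_\p}$ of the flat module $A_\p$, is cleaner because it is itself flat and its cosupport is identified as $\{\p\}$ directly in the literature, whereas verifying the cosupport of your Koszul complex would require an extra argument. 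Second, your worry about ``reassembling every object of $\cat{D}(A)$ from residue fields under products'' and the role of finite Krull dimension there is misplaced: Neeman's classification of colocalizing subcategories holds for arbitrary commutative noetherian rings, so once you know every prime lies in the cosupport of some object of $\opn{Coloc}(\opn{Flat}(A))$, the conclusion is immediate with no further convergence argument. Finite Krull dimension is used only in the earlier Raynaud--Gruson step, exactly as you say.
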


The dual result about bounded below complexes of projective $A$-modules also holds. See \cref{rem:dual} for details.
In the appendix, Nakamura and Thompson further generalize this result, showing in \cref{cor:acyc-inj-general} that it holds over any commutative noetherian ring.

Our next results concern the big injective finitistic dimension. Recall that it is given by 
\[
\opn{FID}(A) = \sup\{\injdim_A(M) \mid M \in \opn{Mod}(A), \injdim_A(M) < \infty\}.
\]
Our first result concerning the finitistic dimension is given in \cref{thm:FPD} and it states:
\begin{cthm}\label{cthmB}
Let $A$ be a left and right noetherian ring with a dualizing complex,
and suppose that every bounded above complex of injective $A$-modules which is acyclic is null-homotopic.
Then $\opn{FID}(A) < \infty$.
\end{cthm}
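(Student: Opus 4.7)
The plan is to argue by contrapositive: assume $\opn{FID}(A) = \infty$ and construct a bounded above acyclic complex of injective $A$-modules that fails to be null-homotopic. First I would record a convenient reformulation: a bounded above acyclic complex of injectives $\cdots \to I^{-1} \to I^0 \to 0$ is null-homotopic if and only if every cocycle $K^n = \ker(I^n \to I^{n+1})$ is injective. Indeed, each short exact sequence $0 \to K^{n-1} \to I^{n-1} \to K^n \to 0$ splits if and only if $K^{n-1}$ is a direct summand of the injective $I^{n-1}$, which, since $A$ is noetherian, is equivalent to $K^{n-1}$ being injective; the splittings then assemble into a contracting homotopy. Consequently, the hypothesis is equivalent to asserting that every $A$-module $N$ admitting an exact left resolution $\cdots \to I^{-2} \to I^{-1} \to N \to 0$ by injective $A$-modules is itself injective. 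To contradict this it suffices to exhibit a non-injective $A$-module of finite positive injective dimension that admits such a left injective resolution.

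The dualizing complex $R$ furnishes such an $N$ via the Foxby machinery relating the $A$-side and the $A^{op}$-side. Given a module $M$ with $\injdim_A(M)$ very large but finite (available since $\opn{FID}(A) = \infty$), $M$ lies in the Bass class associated with $R$, so $X := \RHom_A(R, M) \in \cat{D}(A^{op})$ admits a bounded above projective resolution $P^\bullet \to X$, and the canonical map $R \Lotimes_{A^{op}} P^\bullet \to M$ is a quasi-isomorphism. Representing $R$ by a bounded complex of $A$-$A^{op}$-bimodules whose terms are injective as left $A$-modules (possible since $\injdim_A(R) < \infty$), each $R^i \otimes_{A^{op}} P^j$ is a direct sum of copies of the injective left $A$-module $R^i$ and hence is itself injective. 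The totalisation $C^\bullet := \opn{Tot}(R \otimes_{A^{op}} P^\bullet)$ is then a bounded above complex of injective $A$-modules quasi-isomorphic to $M$ in $\cat{D}(A)$. From a suitable negative truncation of $C^\bullet$ one extracts a module $N$ appearing as a cocycle; $N$ is a cosyzygy of $M$ and therefore has positive, finite injective dimension, so in particular is not injective. Splicing the tail of $C^\bullet$ terminating at $N$ with a finite injective resolution of $N$ on the right then produces the desired bounded above acyclic complex of injective $A$-modules whose cocycle $N$ is not injective.

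By the reformulation in the first paragraph this complex is not null-homotopic, contradicting the hypothesis, and hence $\opn{FID}(A) < \infty$. The principal obstacle I foresee is the module-level bookkeeping in passing from the derived-category statement $C^\bullet \simeq M$ in $\cat{D}(A)$ to an honest exact complex with identifiable module cocycles: one must choose truncations, representatives and splicings so that a particular cocycle is literally isomorphic to a non-injective cosyzygy of $M$, and so that the left tail of the constructed complex is genuinely exact rather than merely acyclic up to quasi-isomorphism. This control is afforded by the two-sided finite injective dimension of the dualizing complex together with its biduality properties, and is where the structural assumptions on $R$ are indispensable.
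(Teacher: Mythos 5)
Your reformulation in the first paragraph is correct (with the caveat, which you do implicitly use later, that the cocycle $N$ of a bounded above acyclic complex of injectives automatically has finite injective dimension, so the hypothesis is really the assertion that every $N$ of \emph{finite} injective dimension admitting a left injective resolution is injective). However, the construction in the second paragraph has a genuine gap: it cannot produce the required non-injective cocycle.

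The problem is that for $M$ of \emph{finite} injective dimension, the complex $X = \RHom_A(R,M)$ has \emph{finite} projective dimension over $A^{\op}$. Indeed, if $J^\bullet$ is a finite injective resolution of $M$, then $\Hom_A(R,J^\bullet)$ is a bounded double complex in which every term $\Hom_A(R^i, J^j)$ is flat (this is exactly the content of \cite[Lemma 4.1(b)]{Pos} cited in the proof of \cref{prop:flatdimOfDual}); hence $X$ is represented by a bounded complex of flats, so $\flatdim_{A^{\op}}(X) < \infty$, and then by Jorgensen's theorem \cite[Theorem]{Jor05} also $\projdim_{A^{\op}}(X) < \infty$. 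Consequently $P^\bullet$ is homotopy equivalent to a \emph{bounded} complex of projectives, and since $R\otimes_{A^{\op}}-$ is additive it follows that $C^\bullet$ is homotopy equivalent to a bounded complex of injectives. For a bounded complex of injectives quasi-isomorphic to $M$ in non-negative degrees, the cocycles $\Z^n(C)$ with $n<0$ are \emph{all injective}: starting from the bottom, $\Z^a(C)=0$ when $C^{a-1}=0$ and $H^a(C)=0$, and one sees by upward induction using the split short exact sequences $0\to \Z^{n}\to C^{n}\to \Z^{n+1}\to 0$ that each negative cocycle is a direct summand of an injective module. Padding $P^\bullet$ by a contractible tail does not help, since the added summand of $C^\bullet$ is null-homotopic and so its cocycles are direct summands of injectives as well. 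In short, the assertion that a negative cocycle $N$ of $C^\bullet$ is ``a cosyzygy of $M$'' is not justified and is in fact false; the negative part of $C^\bullet$ carries no non-trivial cosyzygy information precisely because the Foxby/Bass-class correspondence forces $X$ to have finite projective dimension.

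This is the structural obstruction that the paper's proof is designed to circumvent. The paper's argument does not attempt to produce a single non-injective cocycle from a single module of finite injective dimension. Instead, \cref{thm:seperation} extracts from $\FID(A)=\infty$ an infinite family of modules $M_n$ with $\injdim_A(M_n)=a_n\to\infty$ together with a uniform non-vanishing $\Ext^{a_n}_A(R,M_n)\neq 0$, and the non-null-homotopic bounded above acyclic complex of injectives is constructed as the cone of the canonical quasi-isomorphism $\bigoplus_n I_n[a_n]\to\prod_n I_n[a_n]$; the failure of this map to be a homotopy equivalence is detected by applying $\Hom_A(S,-)$ for a bounded complex of finitely generated modules $S\cong\opn{Rest}_A(R)$, which commutes with both products and coproducts. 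Any attempt along your lines would have to be reworked so that the unboundedness of $\FID(A)$ — not merely the largeness of a single $\injdim_A(M)$ — is what forces the non-injectivity; as it stands, the single-module construction cannot escape the finiteness of $\projdim_{A^{\op}}(\RHom_A(R,M))$.
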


Combining \cref{cthmA} and \cref{cthmB}, 
we obtain the following, which generalizes \cite[Proposition 5.2]{Rickard} where Rickard proved the same result for finite dimensional algebras over a field:

\begin{ccor}
Let $A$ be a left and right noetherian ring with a dualizing complex.
If $\opn{Coloc}(\opn{Proj}(A)) = \cat{D}(A)$ then $\opn{FID}(A) < \infty$.
\end{ccor}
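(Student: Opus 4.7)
The plan is simply to chain the two preceding main theorems. The hypotheses of the corollary are exactly the hypotheses of \cref{cthmA}: $A$ is left and right noetherian, admits a dualizing complex, and satisfies $\opn{Coloc}(\opn{Proj}(A)) = \cat{D}(A)$. So as a first step I would invoke \cref{cthmA} to extract the intermediate conclusion that every bounded above cochain complex of injective $A$-modules which is acyclic is null-homotopic.

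Next, I would feed this property, together with the standing assumption that $A$ is left and right noetherian with a dualizing complex, directly into \cref{cthmB}. Its conclusion is precisely that $\opn{FID}(A) < \infty$, which is what we want.

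Since the entire argument is a two-step composition of already-stated results, there is essentially no obstacle at this stage: all the work has been pushed into \cref{cthmA} and \cref{cthmB}. The only thing worth flagging is that, in order for the corollary to actually generalize \cite[Proposition~5.2]{Rickard}, one should note that any finite dimensional algebra over a field automatically possesses a dualizing complex (given by the $k$-linear dual), so Rickard's hypothesis is a special case of ours; the new content is that the same conclusion holds in the much broader noetherian-with-dualizing-complex setting, where the more elementary duality techniques available over a field are no longer at hand, and one must instead rely on the Grothendieck/Yekutieli duality machinery embedded in \cref{cthmA} and \cref{cthmB}.
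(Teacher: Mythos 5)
Your proof is correct and is exactly the argument the paper uses: the corollary is obtained by composing \cref{thm:acyclic} (\cref{cthmA}) with \cref{thm:FPD} (\cref{cthmB}). Your additional remark that finite dimensional algebras over a field possess a dualizing complex, so that the corollary indeed generalizes Rickard's result, also matches the discussion the paper gives in the remark immediately following this corollary.
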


In the appendix, Nakamura and Thompson show that for any ring, 
the converse of \cref{cthmB} (and the dual statement about finitistic projective dimension) hold. Using this, our results above, and our results from \cite{ShFinDim}, 
we obtain the following which is the main result of this paper,
repeated below as \cref{thm:main}:

\begin{cthm}\label{cthm:main}
Let $A$ be any ring, and consider the following statements that $A$ might satisfy:
\begin{enumerate}
\item The injective left $A$-modules generate $\cat{D}(A)$, that is $\opn{Loc}(\opn{Inj}(A)) = \cat{D}(A)$.
\item The flat right $A$-modules cogenerate $\cat{D}(A^{\op})$, that is $\opn{Coloc}(\opn{Flat}(A^{\op})) = \cat{D}(A^{\op})$.
\item The projective right $A$-modules cogenerate $\cat{D}(A^{\op})$, that is $\opn{Coloc}(\opn{Proj}(A^{\op})) = \cat{D}(A^{\op})$.
\item Every bounded above complex of injective right $A$-modules which is acyclic is null-homotopic, that is, $\cat{K}^{-}_{\mrm{ac}}(\opn{Inj}(A^{\op})) := \{M \in \cat{K}^{-}(\opn{Inj}(A^{\op})) \mid \forall n, \mrm{H}^n(M) = 0\} = 0$.
\item Every bounded below complex of projective left $A$-modules which is acyclic is null-homotopic, that is, $\cat{K}^{+}_{\mrm{ac}}(\opn{Proj}(A)) := \{ M \in \cat{K}^{+}(\opn{Proj}(A)) \mid \forall n, \mrm{H}^n(M) = 0\} = 0$.
\item It holds that $\opn{FPD}(A) < \infty$.
\item It holds that $\opn{FFD}(A) < \infty$.
\item It holds that $\opn{FID}(A^{\op}) < \infty$.
\end{enumerate}
Then the following holds:
\begin{enumerate}
\item[a)] It always holds that  (8) $\implies$ (4), that (7) $\impliedby$ (6) $\implies$ (5) $\impliedby$ (7) and that (3) $\implies$ (2).
\item[b)] If moreover $A$ is left coherent then (1) $\implies$ (2),
and if $A$ is left noetherian then (7) $\iff$ (8).
\item[c)] If $A$ is left and right noetherian and has a dualizing complex, 
then 
\[
(1)\implies (2) \iff (3) \implies (4)  \iff (5)  \iff  (6) \iff (7) \iff (8)
\]
\item[d)] If $A$ does not have a dualizing complex, even if it is commutative and noetherian,
it could happen that (1), (2), (4) and (5) hold, but (6), (7) and (8) do not hold.
\end{enumerate}
\end{cthm}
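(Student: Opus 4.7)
The plan is to assemble this omnibus theorem by combining the paper's main results (\cref{cthmA}, \cref{cthmB}, and the appendix of Nakamura--Thompson) with classical character-module dualities and the finitistic-dimension characterizations from \cite{ShFinDim}. For part~(a), I would dispatch each always-valid implication separately: $(8)\Rightarrow(4)$ and $(6)\Rightarrow(5)$ are exactly the statements the appendix produces, namely the converse of \cref{cthmB} and its projective-side dual, valid over arbitrary rings; $(3)\Rightarrow(2)$ is immediate from the inclusion $\opn{Proj}(A^{\op})\subseteq\opn{Flat}(A^{\op})$; and $(6)\Rightarrow(7)$ uses the classical inequality $\opn{FFD}(A)\leq\opn{FPD}(A)$. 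For $(7)\Rightarrow(5)$, given a bounded below acyclic complex $P^{\bullet}$ of projectives I would observe that, for any $N>n$, the syzygy $\mrm{Z}^n$ is the $(N-n)$-th syzygy in a projective resolution of length $N-1$ of the finite-projective-dimension module $\mrm{Z}^N$; taking $N=n+\opn{FFD}(A)$ shows $\mrm{Z}^n$ is flat, and then a theorem of Neeman says that any acyclic complex of projectives with flat kernels is null-homotopic.

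For part~(b), I would invoke Lambek's character duality $M\mapsto\Hom_{\mbb{Z}}(M,\mbb{Q}/\mbb{Z})$, which identifies flat right $A$-modules with injective left $A$-modules (a biduality when $A$ is left coherent), together with the observation that Pontryagin dualization sends localizing subcategories to colocalizing ones on the opposite side, to obtain $(1)\Rightarrow(2)$; and I would invoke Ishikawa's theorem, which identifies $\opn{FFD}(A)$ with $\opn{FID}(A^{\op})$ for left noetherian $A$, to obtain $(7)\Leftrightarrow(8)$. Part~(c) is then largely assembly: $(1)\Rightarrow(2)$ from~(b), $(3)\Rightarrow(4)$ by applying \cref{cthmA} to $A^{\op}$, $(4)\Rightarrow(8)$ by applying \cref{cthmB} to $A^{\op}$, and $(6)\Leftrightarrow(7)$ via the classical equality $\opn{FFD}=\opn{FPD}$ for noetherian rings, combined with the implications already established in part~(a). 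The remaining links $(2)\Rightarrow(3)$ and the closure of the equivalences among $(4)$--$(8)$ through~$(5)$ would be supplied by results of \cite{ShFinDim} that characterize $\opn{Coloc}(\opn{Proj})$ and $\opn{Coloc}(\opn{Flat})$ in the noetherian + dualizing complex setting via finitistic dimensions, with the dualizing complex providing the bridge between the projective/flat statements on one side and the injective statements on the other.

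The principal difficulty lies in part~(d): exhibiting a commutative noetherian ring --- necessarily without a dualizing complex --- for which $(1),(2),(4),(5)$ hold but $(6),(7),(8)$ fail. I would use a commutative noetherian ring of infinite Krull dimension (Nagata's classical construction), for which $\opn{FPD}=\infty$ so that $(6),(7),(8)$ fail and no dualizing complex exists. The appendix's \cref{cor:acyc-inj-general} gives $(4)$ unconditionally for every commutative noetherian ring, and a parallel projective-side argument yields $(5)$. Verifying $(1)$ and $(2)$ is the subtlest step, which I would handle by appealing to Neeman's classification of localizing subcategories of $\cat{D}(A)$ for commutative noetherian $A$, from which one checks that the injective hulls $\{E(A/\p)\}_{\p}$ collectively generate $\cat{D}(A)$ as a localizing subcategory (with a dual argument handling the colocalizing statement). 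The delicate point is that these categorical properties are genuinely infinite-Krull-dimensional phenomena which are not formal consequences of the earlier results in the paper.
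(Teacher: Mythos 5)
Your proposal tracks the paper's actual proof quite closely in overall structure, and the key references you invoke (the appendix for $(8)\Rightarrow(4)$, $(6)\Rightarrow(5)$, $(7)\Rightarrow(5)$; \cref{cthmA} applied to $A^{\op}$ for $(3)\Rightarrow(4)$; \cref{cthmB} applied to $A^{\op}$ for $(4)\Rightarrow(8)$; \cite[Theorem 5.1]{ShFinDim} for $(5)\Rightarrow(6)$; infinite Krull dimension for part~(d)) are exactly the ones used. Your syzygy argument for $(7)\Rightarrow(5)$ (flat cycle modules plus Neeman) reproduces the appendix's \cref{thm_proj}. A few points of imprecision are worth flagging, though none derails the argument. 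First, $(6)\Rightarrow(7)$ does not follow merely from the pointwise inequality $\flatdim_A(M)\le\projdim_A(M)$: the two finitistic suprema range over different classes of modules, and to deduce $\opn{FFD}(A)\le\opn{FPD}(A)$ from $\opn{FPD}(A)<\infty$ one needs Jensen's theorem that finiteness of $\opn{FPD}$ forces every flat module to have finite projective dimension; the paper cites this explicitly. Second, for $(1)\Rightarrow(2)$ the statement that character duality ``sends localizing subcategories to colocalizing ones on the opposite side'' glosses the fact that $\Hom_{\mathbb{Z}}(-,\mathbb{Q}/\mathbb{Z})$ is not an equivalence; the paper's \cref{thm:injImplyFlat} instead forms the localizing subcategory $\mathcal{S}=\{M\mid\Hom_{\mathbb{Z}}(M,\mathbb{Q}/\mathbb{Z})\in\opn{Coloc}(\opn{Flat}(A^{\op}))\}$, uses coherence and \cite[Lemma 3.1.4]{Xu} to put the injectives in $\mathcal{S}$, and then observes that $\Hom_{\mathbb{Z}}(A,\mathbb{Q}/\mathbb{Z})$ is an injective cogenerator of $\opn{Mod}(A^{\op})$. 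Third, ``the classical equality $\opn{FFD}=\opn{FPD}$ for noetherian rings'' is not correct in that generality; the paper derives $(6)\Leftrightarrow(7)$ in part~(c) from \cite{Jor05}, which requires the dualizing complex hypothesis. Finally, in part~(d) you considerably overwork the verification of $(1)$ and $(2)$: the paper simply cites \cite[Theorem 3.3]{Rickard} for $(1)$ over any commutative noetherian ring and then gets $(2)$ for free from the already-proved implication $(1)\Rightarrow(2)$ in part~(b), rather than revisiting Neeman's classification or running a separate colocalizing argument.
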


It is particularly interesting to note that the above implies that when $A$ is left and right noetherian, and has a dualizing complex, 
then the question if $\opn{FPD}(A) < \infty$ (which specializes to the finitistic dimension conjecture in case $A$ is an artin algebra) is thus equivalent to statements (4) and (5) above,
while if $A$ does not have a dualizing complex, this is not true.

Using the above and the covariant Grothendieck duality,
we give in \cref{thm:exact-criteria} another necessary and sufficient condition for a left and right noetherian ring with a dualizing complex $R$ to have $\opn{FPD}(A) < \infty$. It turns out that finite finitistic dimension is equivalent to the functor $R\otimes_A -$ being exact on the category of bounded below cochain complexes of projectives.

Restricting our attention to finite dimensional algebras over a field,
and using a recent symmetry result of Cummings \cite{Cummings}, 
we obtain in \cref{cor:symmetry} the following equivalent form of the finitistic dimension conjecture:

\begin{ccor}
Let $\K$ be a field.
Then every finite dimensional algebra over $\K$ satisfies that $\opn{FPD}(A) < \infty$ if and only if for every finite dimensional algebra $A$ over $\K$ the following are equivalent:
\begin{enumerate}
\item Every bounded above complex of injective left $A$-modules which is acyclic is null-homotopic.
\item Every bounded below complex of projective left $A$-modules which is acyclic is null-homotopic.
\end{enumerate}
\end{ccor}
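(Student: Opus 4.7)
The plan is to specialize \cref{cthm:main} to finite-dimensional $\K$-algebras and then invoke the symmetry theorem of Cummings \cite{Cummings}. The key observation enabling the reduction is that every finite-dimensional $\K$-algebra $A$ is left and right noetherian and admits $\Hom_\K(A,\K)$ as a dualizing complex, so part (c) of \cref{cthm:main} applies both to $A$ and to its opposite $A^{\op}$.

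First I would translate the two conditions of the corollary into statements about finitistic dimensions. Condition (2) of the corollary is literally condition (5) of \cref{cthm:main} for the algebra $A$, and by the equivalence (5)~$\iff$~(6) it is therefore equivalent to $\opn{FPD}(A) < \infty$. Condition (1) of the corollary, since injective left $A$-modules are the same thing as injective right $A^{\op}$-modules, is condition (4) of \cref{cthm:main} applied to $A^{\op}$, hence equivalent to $\opn{FPD}(A^{\op}) < \infty$. Consequently, the assertion that (1)~$\iff$~(2) holds for every finite-dimensional $\K$-algebra is the same as the assertion that finiteness of $\opn{FPD}$ is a left-right symmetric property within the class of finite-dimensional $\K$-algebras.

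With this translation in hand, the corollary reads: the big finitistic dimension conjecture holds for finite-dimensional $\K$-algebras if and only if finiteness of $\opn{FPD}$ is left-right symmetric across this class. The implication $(\Rightarrow)$ is immediate, since if $\opn{FPD}$ is always finite then both sides of the biconditional $\opn{FPD}(A) < \infty \iff \opn{FPD}(A^{\op}) < \infty$ hold unconditionally. The nontrivial direction $(\Leftarrow)$ is precisely the content of the recent symmetry theorem of Cummings \cite{Cummings}, which provides the only substantive input beyond \cref{cthm:main}. The main potential obstacle, and the step one would want to verify with care, is matching the exact formulation of Cummings' theorem with the symmetry statement produced by this translation; once that matching is done, the argument is essentially a two-line citation.
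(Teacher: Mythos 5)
Your proposal is correct and matches the paper's proof essentially verbatim: the paper likewise translates condition (2) of the corollary into $\opn{FPD}(A)<\infty$ via (5)$\iff$(6) of Theorem~\ref{thm:main}, translates condition (1) into $\opn{FPD}(A^{\op})<\infty$ via (4)$\iff$(6) applied to $A^{\op}$, and then invokes Cummings' symmetry result (Theorem~3.4 of \cite{Cummings}). You make explicit the step of applying the main theorem to $A^{\op}$, which the paper leaves implicit, but the argument is the same.
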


We then show in \cref{thm:limit} that if a left and right noetherian ring $A$ with a dualizing complex has finite small finitistic dimension, then $\opn{FPD}(A) < \infty$ if and only if every acyclic bounded below complex of projectives is a direct limit of acyclic bounded below complexes of finitely generated projective modules.

As another application of the above, we apply the above results to study Gorenstein symmetry.
The Gorenstein symmetry conjecture asks, 
given a left and right noetherian ring $A$,
with the property that $\injdim_A(A) < \infty$,
whether it holds that $\injdim_{A^{\op}}(A^{\op})<\infty$.
In that case, by \cite[Lemma A]{Zaks}, it holds that $\injdim_A(A) = \injdim_{A^{\op}}(A^{\op})$.
Most of the existing literature on this problem focuses on the case where $A$ is an artin algebra,
but the problem makes sense (and is open)
in the much more general noetherian case considered here.
As far as we know, 
no counterexamples are known for the problem over noetherian rings.
We wish to provide new criteria for a noetherian ring $A$ to satisfy the Gorenstein symmetry conjecture. An important feature of our main result is that we are able to, assuming $A$ has finite injective dimension as a left module over itself, obtain several necessary and sufficient conditions for $A$ to have finite injective dimension as a right module over itself in terms of the behavior of left $A$-modules and complexes of left $A$-modules.

We say that an $A$-$A$ bimodule $\phantom{ }_A M_A$ has a \textbf{bi-injective resolution} if there is a bounded below complex of bimodules $\phantom{ }_A I_A$ which is quasi-isomorphic to $M$, and such that forgetting its bimodule structure, 
it is a complex of injectives both on the left and on the right. For example, if there exists a commutative ring $\K$ such that $A$ is a flat central $\K$-algebra,
then every $A$-$A$ bimodule has a bi-injective resolution.

Our main result about Gorenstein symmetry, given in \cref{thm:Gorenstein} states:
\begin{cthm}
Let $A$ be a ring which is left and right noetherian,
and such that the regular bimodule $\phantom{ }_A A_A$ has a bi-injective resolution.
Assume that $\injdim_A(A) < \infty$.
Then the following are equivalent:
\begin{enumerate}
\item $A$ satisfies the Gorenstein symmetry conjecture: It holds that $\injdim_{A^{\op}}(A^{\op}) < \infty$.
\item $A$ has a dualizing complex and $\opn{Coloc}(\opn{Proj}(A)) = \cat{D}(A)$.
\item $A$ has a dualizing complex and $\opn{Coloc}(\opn{Flat}(A)) = \cat{D}(A)$.
\item $A$ has a dualizing complex, and every bounded above complex of injective $A$-modules which is acyclic is null-homotopic.
\item $A$ has a dualizing complex, and every injective $A$-module has finite flat dimension.
\item $A$ has a dualizing complex $R$ which has finite flat dimension as a complex of left $A$-modules.
\end{enumerate}
\end{cthm}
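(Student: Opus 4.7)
The plan is to treat (1) as the central equivalent condition (characterizing $A$ as Iwanaga--Gorenstein) and show each of (2)--(6) is equivalent to it, leveraging \cref{cthm:main} for the triangulated conditions and classical Iwanaga/Foxby machinery for the homological ones. For the forward direction $(1)\Rightarrow (2)$--$(6)$: with both self-injective dimensions of $A$ now finite, truncate the bi-injective resolution $\phantom{ }_A I_A$ of the regular bimodule to a bounded complex of bimodules with left- and right-injective terms, quasi-isomorphic to $A$. The homothety maps $A\to\RHom_A(I,I)$ and $A\to\RHom_{A^{\op}}(I,I)$ are isomorphisms since $I\simeq A$ as bimodules, so $I$ is a dualizing complex and $R:=A$ itself witnesses (6) with $\flatdim_A(A)=0$. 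Iwanaga's classical theorem over the resulting Iwanaga--Gorenstein ring gives (5), and $\opn{FPD}(A)<\infty$ (bounded by the Gorenstein dimension) together with \cref{cthm:main} yields (2), (3), (4).

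For the reverse direction, the cleanest implication is $(6)\Rightarrow(1)$: the finitely generated left-cohomology of $R$ combined with left noetherianness of $A$ upgrades $\flatdim_A(R)<\infty$ to $\projdim_A(R)<\infty$; the bimodule isomorphism $A\simeq\RHom_A(R,R)$ then yields
\[
\injdim_{A^{\op}}(A)\leq\projdim_A(R)+\injdim_{A^{\op}}(R)<\infty,
\]
where $\injdim_{A^{\op}}(R)<\infty$ is built into the dualizing property. For $(5)\Rightarrow(1)$: apply Iwanaga's theorem directly, where the required uniform bound on flat dimensions is automatic since injective left $A$-modules are closed under direct sum over the left noetherian ring $A$ — an unbounded family of injectives would produce a single injective of infinite flat dimension.

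Finally, for $(2),(3),(4)\Rightarrow(1)$: each includes existence of a dualizing complex $R$. Applying \cref{cthm:main}(c) to $A^{\op}$ — also two-sided noetherian with the same $R$ as dualizing complex — the Coloc and null-homotopic conditions translate into $\opn{FPD}(A^{\op})<\infty$, and by the formula identifying this with $\projdim_{A^{\op}}(R)$ (as developed in \cite{ShFinDim}) we obtain $\projdim_{A^{\op}}(R)<\infty$. The right-sided analogue of the $(6)\Rightarrow(1)$ computation, using $A\simeq\RHom_{A^{\op}}(R,R)$, combined with the standing hypothesis $\injdim_A(A)<\infty$ and the bimodule structure supplied by the bi-injective resolution of $A$, then forces $\projdim_A(R)<\infty$ as well, reducing to case (6). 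The principal technical obstacle throughout is the careful coordination of left/right conventions across \cref{cthm:main}, the dualizing complex, and the bi-injective resolution: the conditions in \cref{cthm:main} are stated asymmetrically, and the transfer of projective/flat finiteness of $R$ between sides — ultimately anchored at the bimodule level by the bi-injective resolution of $A$ — is the delicate point that distinguishes this theorem from its immediate consequences of \cref{cthm:main}.
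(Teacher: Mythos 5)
Your overall plan—treat (1) as the hub and reduce everything to Iwanaga--Gorensteinness of $A$—is reasonable, and some pieces (the upgrade of $\flatdim_A(R)<\infty$ to $\projdim_A(R)<\infty$ in $(6)\Rightarrow(1)$, the direct-sum argument showing a uniform flat-dimension bound in (5)) are sound and close to the paper. But there is a real gap in the forward direction. You claim that from (1) one obtains $\opn{FPD}(A)<\infty$ and then \cref{cthm:main} yields (2), (3), (4). That is not what \cref{cthm:main} says: in part (c) the implication runs \emph{from} the colocalizing condition \emph{to} finite finitistic dimension, i.e.\ $(3)\Rightarrow\cdots\Rightarrow(6)$, never the other way around. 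Finiteness of $\opn{FPD}$ does not give projective (or flat) cogeneration via that theorem, and in fact cogeneration is strictly stronger in general; that asymmetry is one of the main points of the paper. The paper instead reaches (3) of \cref{thm:Gorenstein} by an entirely different route: $\injdim_{A^{\op}}(A^{\op})<\infty$ implies injective generation $\opn{Loc}(\opn{Inj}(A^{\op}))=\cat{D}(A^{\op})$ by Rickard's generation theorem, and then \cref{thm:injImplyFlat} gives $\opn{Coloc}(\opn{Flat}(A))=\cat{D}(A)$. Alternatively, one can route through your own observation: (1) gives (5), and (5) implies (3) because injectives of finite flat dimension lie in $\opn{Coloc}(\opn{Flat}(A))$, while $\opn{Coloc}(\opn{Inj}(A))=\cat{D}(A)$ always holds. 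Either fix works; as written, your derivation of (2) and (3) does not.

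Two smaller points. First, ``apply Iwanaga's theorem directly'' for $(5)\Rightarrow(1)$ is underspecified: Iwanaga's theorem is a consequence of Gorensteinness, not a route into it. What you actually need is a characterization such as Ishikawa's (the right self-injective dimension equals the supremum of flat dimensions of injective left modules), together with your direct-sum bound; the paper simply avoids this by proving $(5)\Rightarrow(3)$ and closing the cycle. Second, for $(2),(3),(4)\Rightarrow(1)$, once you have $\opn{FPD}(A^{\op})<\infty$ there is no need for a formula relating $\opn{FPD}$ to $\projdim(R)$ or for transferring projective finiteness across sides at the bimodule level: the classical result of Auslander--Reiten applies directly, giving $\injdim_{A^{\op}}(A^{\op})<\infty$ from $\opn{FPD}(A^{\op})<\infty$ together with the standing hypothesis $\injdim_A(A)<\infty$. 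Your version of this step is unnecessarily indirect and invokes an identification of $\opn{FPD}$ with a projective dimension of $R$ that is not established.
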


The fact that conditions (5) and (6) are equivalent to (1) is not new, and is of course well known.
It can be easily seen that for any ring, (5)$\implies$(3),
but condition (3) is in general much weaker than (5).
For instance, any commutative noetherian ring satisfies that 
$\opn{Coloc}(\opn{Flat}(A)) = \cat{D}(A)$,
but this is far from true for (5):
the only commutative noetherian rings that satisfy (5) are the Gorenstein rings of finite Krull dimension,
so that one may view (3) as a substantial weakening of the classical condition for Gorenstein symmetry.
For rings with dualizing complexes, condition (4) is even weaker. As discussed in \cref{rem:flat} below,
for any left and right noetherian ring with a dualizing complex,
it holds that (3)$\implies$(4),
so our main result identifies much weaker necessary and sufficient conditions for Gorenstein symmetry to hold.

\textbf{Acknowledgments.}

The author thanks Jan \v S\v tov\'\i\v cek, Amnon Yekutieli, Tsutomu Nakamura and Peder Thompson for helpful discussions.
This work has been supported by the grant GA~\v{C}R 20-02760Y from the Czech Science Foundation.

\section{Preliminaries}

In this section we gather various preliminaries used throughout the paper.

\subsection{Derived categories and DG-rings}

We will freely use the language of derived categories over rings and DG-rings. 
We refer the reader to \cite{Kel,YeBook} for background about these.
By a DG-ring $A$ we will always mean a non-positive DG-ring $A = \bigoplus_{n=-\infty}^0 A^n$, equipped with differential of degree $+1$.
For a ring or a DG-ring $A$, 
we denote by $A^{\op}$ the opposite ring or DG-ring.
As modules and DG-modules are always left DG-modules,
right DG-modules will be considered as DG-modules over $A^{\op}$.
The derived category of complexes of left $A$-modules (or left DG-modules in cases $A$ is a DG-ring) will be denoted by $\cat{D}(A)$.
The bounded derived category $\cat{D}^{\mrm{b}}(A)$ is the full triangulated subcategory consisting of all $M$ such that $\mrm{H}^n(M) = 0$ for all $|n|\gg 0$.
Given $M \in \cat{D}(A)$, 
we let $\inf(M) = \inf\{n \in \mathbb{Z} \mid \mrm{H}^n(M) \ne 0\}$,
and similarly $\sup(M) = \sup\{n \in \mathbb{Z} \mid \mrm{H}^n(M) \ne 0\}$.

For a DG-ring $A$, we have that $\mrm{H}^0(A)$ is a ring,
and there is a natural map of DG-rings $A \to \mrm{H}^0(A)$.
This induces two functors 
\[
\mrm{H}^0(A)\otimes^{\mrm{L}}_A -, \mrm{R}\opn{Hom}_A(\mrm{H}^0(A),-) :\cat{D}(A) \to \cat{D}(\mrm{H}^0(A)),
\]
known as the reduction and coreduction functors,
which are extremely useful in the study of DG-rings.
They were studied in detail in \cite{ShINJ,ShRed,YeDual},
and we will frequently use their properties.

\subsection{Homological and finitistic dimensions}

If $A$ is either a ring or a DG-ring, 
we will consider various homological dimensions of complexes or DG-modules over $A$.
These are defined for a given $M \in \cat{D}^{\mrm{b}}(A)$ as follows:
The projective dimension of $M$ is given by
\[
\projdim_A(M) = \inf\{n \in \mathbb{Z} \mid \opn{Ext}_A^i(M,N) = 0 \text{ for all $N \in \cat{D}^\mrm{b}(A)$ and any $i > n + \sup(N)$}\},
\]
the injective dimension of $M$ is defined as
\[
\injdim_A(M) = \inf\{n \in \mathbb{Z} \mid \opn{Ext}_A^i(N,M) = 0 \text{ for all $N \in \cat{D}^\mrm{b}(A)$ and any $i > n - \inf(N)$}
\},
\]
and the flat dimension of $M$ is
\[
\flatdim_A(M) = \inf\{n \in \mathbb{Z} \mid \opn{Tor}^A_i(N,M) = 0\text{ for all $N \in \cat{D}^\mrm{b}(A^{\op})$ and any $i > n - \inf(N)$}\}.
\]

We shall further require the notions of finitistic dimensions over rings and DG-rings.
Over a ring $A$, we define the (big) projective finitistic dimension by 
$\opn{FPD}(A) = \sup\{\projdim_A(M) \mid M \in \opn{Mod}(A), \projdim_A(M) < \infty\}$
Similarly, the injective finitistic dimension of $A$ is 
$\opn{FID}(A) = \sup\{\injdim_A(M) \mid M \in \opn{Mod}(A), \injdim_A(M) < \infty\}$
and the flat finitistic dimension of $A$ is
$\opn{FFD}(A) = \sup\{\flatdim_A(M) \mid M \in \opn{Mod}(A), \flatdim_A(M) < \infty\}$
For our purposes however, 
these are not sufficient, 
and we shall also require the corresponding notions over DG-rings with bounded cohomology, following \cite{DGFinite,ShFinDim}.
It should be noted that in that context, 
where modules are replaced by DG-modules, 
we must normalize the definitions.
Thus, the projective finitistic dimension of a DG-ring $A$ is given by
\[
\opn{FPD}(A) = \sup\{\projdim_A(M) + \inf(M) \mid M \in \cat{D}^{\mrm{b}}(A), \projdim_A(M) < \infty\}.
\]
The injective finitistic dimension is
\[
\opn{FID}(A) = \sup\{\injdim_A(M) - \sup(M) \mid M \in \cat{D}^{\mrm{b}}(A), \injdim_A(M) < \infty\},
\]
and the flat finitistic dimension is given by the formula
\[
\opn{FFD}(A) = \sup\{\flatdim_A(M) + \inf(M) \mid M \in \cat{D}^{\mrm{b}}(A), \flatdim_A(M) < \infty\}.
\]

\subsection{Homotopy categories}

For a ring $A$, recall that $\cat{K}(A)$ is the homotopy category of complexes of $A$-modules.
We let $\opn{Proj}(A)$, $\opn{Inj}(A)$ and $\opn{Flat}(A)$ denote the collections of all projective, injective and flat $A$-modules.
We shall need the following three full triangulated subcategories of the homotopy category: $\cat{K}(\opn{Inj}(A))$, $\cat{K}(\opn{Proj}(A))$ and $\cat{K}(\opn{Flat}(A))$.
By definition, $\cat{K}(\opn{Inj}(A))$ is the homotopy category of all complexes $I$ such that $I^n$ is an injective $A$-module for all $n$. The categories $\cat{K}(\opn{Proj}(A))$ and $\cat{K}(\opn{Flat}(A))$ are defined similarly.
For more information about these triangulated categories,
see \cite{JorCom,Kra05,Nee08,Stov}.
One important fact we will frequently use is that by \cite[Proposition 8.1]{Nee08},
for any ring $A$, the inclusion functor $\cat{K}(\opn{Proj}(A)) \to \cat{K}(\opn{Flat}(A))$ has a right adjoint.
Following \cite{InKr}, we will denote this right adjoint by $q:\cat{K}(\opn{Flat}(A)) \to \cat{K}(\opn{Proj}(A))$.

\subsection{Complexes of bimodules and bi-injective resolutions}

Given a ring $A$, 
we will often work with complexes $\phantom{ }_A M_A$ of $A$-$A$-bimodules. 
For such a complex $M$,
we define its restriction $\opn{Rest}_A(M) \in \cat{D}(A)$ (respectively $\opn{Rest}_{A^{\op}}(M) \in \cat{D}(A^{\op})$) to be the complex obtained from $M$ by forgetting its right (resp. left) structure.

\begin{dfn}
Let $A$ be a ring, 
and let $\phantom{ }_A M_A$ be a bounded below complex of $A$-$A$-bimodules. 
A bi-injective resolution of $M$ is an $A$-$A$-linear quasi-isomorphism $\phantom{ }_A M_A \to \phantom{ }_A I_A$,
such that $\phantom{ }_A I_A$ is a complex of $A$-$A$-bimodules which satisfies:
\begin{enumerate}
\item $I^n = 0$ for all $n \ll 0$.
\item For each $n$,
the left $A$-module $\left(\opn{Rest}_A(I)\right)^n$ is an injective left $A$-module.
\item For each $n$,
the right $A$-module $\left(\opn{Rest}_{A^{\op}}(I)\right)^n$ is an injective right $A$-module.
\end{enumerate}
\end{dfn}

Often any bounded below complex of bimodules has a bi-injective resolution,
as in the next result, 
which is essentially contained in \cite[Proposition 2.4]{Yek}:
\begin{prop}
Let $A$ be a ring.
Suppose that there exist a commutative ring $\K$,
and a map $\K \to A$, 
whose image lies in the center of $A$,
and such that $A$ is flat over $\K$.
Then every bounded below complex $M$ of $A$-$A$-bimodules has a bi-injective resolution.
\end{prop}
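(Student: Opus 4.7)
My plan is to pass to the enveloping algebra $A^e := A \otimes_\K A^{\op}$. Since the image of $\K \to A$ lies in the center of $A$, giving an $A$-$A$-bimodule (in which the two induced $\K$-actions coincide, which I take as part of the convention) is the same data as giving a left $A^e$-module. Under this identification, the bounded below complex $M$ becomes a bounded below complex of left $A^e$-modules.

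Next, I would use the fact that $\opn{Mod}(A^e)$ is a Grothendieck abelian category, and hence has enough injectives, to choose a quasi-isomorphism $M \to I$ in which $I$ is a bounded below complex of injective left $A^e$-modules. The task then reduces to showing that each $I^n$, after restriction, is injective both as a left $A$-module and as a right $A$-module.

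For this I invoke the standard adjunction fact: if $R \to S$ is a ring map with $S$ flat as a right $R$-module, then the restriction functor $\opn{Mod}(S) \to \opn{Mod}(R)$ preserves injectives, because its left adjoint $S \otimes_R -$ is exact. I apply this to the map $A \to A^e$, $a \mapsto a \otimes 1$. Viewed as a right $A$-module in this way, $A^e$ is $A \otimes_\K A^{\op}$ with right $A$-action on the first tensor factor, and for any right $A$-module $N$ one checks $N \otimes_A A^e \cong N \otimes_\K A^{\op}$. Since $A$ is flat over $\K$ by hypothesis (so $A^{\op}$ is likewise $\K$-flat), this functor is exact in $N$, giving the required right $A$-flatness of $A^e$. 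Therefore $\opn{Rest}_A(I^n)$ is an injective left $A$-module for every $n$. A symmetric argument applied to the map $A^{\op} \to A^e$, $a \mapsto 1 \otimes a$, handles the right-hand side.

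The only step requiring care, though it is essentially bookkeeping, is the identification $N \otimes_A A^e \cong N \otimes_\K A^{\op}$, where the centrality of the image of $\K$ in $A$ is genuinely used in order to match up the two $\K$-actions so that the tensor product over $A$ collapses. Once this flatness on both sides is established, the resolution $M \to I$ has injective components on the left and on the right, and is therefore the desired bi-injective resolution.
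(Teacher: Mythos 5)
Your proof is correct and follows essentially the same route as the paper's: form the enveloping algebra $A\otimes_\K A^{\op}$, take an injective resolution of $M$ there, and use $\K$-flatness of $A$ to see that $A\otimes_\K A^{\op}$ is flat over $A$ on each side, so that restriction preserves injectives. The paper's proof is terser; you have simply supplied the adjunction and flatness bookkeeping that it leaves implicit.
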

\begin{proof}
The point is that the commutative ring $\K$ allows us to form the enveloping algebra $A\otimes_{\K} A^{\op}$,
and a complex of $A$-$A$-bimodules is simply a complex over $A\otimes_{\K} A^{\op}$.
Flatness then ensures that for any injective $A\otimes_{\K} A^{\op}$-module $I$,
the restrictions $\opn{Rest}_A(I)$ and $\opn{Rest}_{A^{\op}}(I)$ are injective over $A$ and $A^{\op}$,
which implies that any injective resolution of $M$ over $A\otimes_{\K} A^{\op}$ is a bi-injective resolution of $M$ over $A$.
\end{proof}

\subsection{Dualizing complexes and the covariant Grothendieck duality}

One of the key tools used in this paper is the theory of dualizing complexes over noncommutative rings,
first introduced by Yekutieli in \cite{Yek},
and studied in detail in \cite{InKr,Pos,vdb,YZ2}.

For our purposes, following \cite{Pos},
we wish to distinguish between weak dualizing complexes and strong dualizing complexes.
We start with recalling the former:

\begin{dfn}\label{dfn:weak}
Let $A$ be a left and right noetherian ring.
A complex of bimodules $\phantom{ }_A R_A$ is called a \textbf{weak dualizing complex} over $A$ if the following holds:
\begin{enumerate}
\item It holds that  $\injdim_A(\opn{Rest}_A(R)) < \infty$ and $\injdim_{A^{\op}}(\opn{Rest}_{A^{\op}}(R)) < \infty$.
\item The cohomology of $R$ is bounded and finitely generated on both sides: for each $n$, $\mrm{H}^n(\opn{Rest}_A(R))$ is a finitely generated $A$-module and $\mrm{H}^n(\opn{Rest}_{A^{\op}}(R))$ is a finitely generated $A^{\op}$-module.
\item The natural homothety maps $A \to \mrm{R}\opn{Hom}_A(R,R)$ and $A \to \mrm{R}\opn{Hom}_{A^{\op}}(R,R)$ are isomorphisms.
\end{enumerate}
\end{dfn}

As a key example, 
if $A$ is a left and right noetherian ring such that 
$\injdim_A(A) = \injdim_{A^{\op}}(A^{\op}) < \infty$,
then by definition, the regular bimodule $\phantom{ }_A A_A$ is a weak dualizing complex over itself.

We will however be focusing mostly on strong dualizing complexes, which we will simply call dualizing complexes.

\begin{dfn}
Let $A$ be a left and right noetherian ring.
A complex of bimodules $\phantom{ }_A R_A$ is called a \textbf{dualizing complex} over $A$ if the following holds:
\begin{enumerate}
\item $R$ is a weak dualizing complex.
\item $R$ is bounded, so $R^n = 0$ for all $|n| \gg 0$.
\item For all $n$, $\left(\opn{Rest}_A(R)\right)^n$ is an injective $A$-module and $\left(\opn{Rest}_{A^{\op}}(R)\right)^n$ is an injective $A^{\op}$-module.
\end{enumerate}
\end{dfn}

Thus, a dualizing complex is simply a bounded complex of bimodules which is simultaneously a complex of injectives on the left and on the right, satisfying the axioms of a weak dualizing complex.

\begin{prop}\label{prop:bi-injective}
Let $A$ be a left and right noetherian ring,
and let $R$ be a weak dualizing complex over $A$.
If $R$ has a bi-injective resolution,
then $A$ has a dualizing complex.
\end{prop}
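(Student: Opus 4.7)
The plan is to construct the dualizing complex by a single smart truncation of a bi-injective resolution $R \to I$. Set $d_1 := \injdim_A(\opn{Rest}_A(R))$ and $d_2 := \injdim_{A^{\op}}(\opn{Rest}_{A^{\op}}(R))$, which are finite by hypothesis, and let $N := \max(d_1, d_2)$. Define the smart truncation $J := \tau^{\leq N}(I)$, whose terms are $J^k = I^k$ for $k < N$, $J^N := Z^N = \ker(d^N \colon I^N \to I^{N+1})$ regarded as an $A$-$A$-subbimodule, and $J^k = 0$ for $k > N$. Since $\sup(R) \leq d_1 \leq N$, the cohomology of $I$ vanishes in degrees $> N$, so the inclusion $J \hookrightarrow I$ is a quasi-isomorphism of bimodule complexes. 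Consequently $J$ is bounded, quasi-isomorphic to $R$ in the derived category of bimodules, and thus it automatically satisfies the three axioms of a weak dualizing complex. Its terms $J^k$ for $k < N$ are bi-injective by hypothesis on $I$, so it only remains to verify that $Z^N$ is both left- and right-injective.

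The main point, and the only real obstacle, is to check the bi-injectivity of $Z^N$; module-theoretic dimension shifting does not apply directly because $I$ is an unbounded-above complex of injectives rather than a resolution of a module. I treat the left side, with the right side being verbatim using $N \geq d_2$. Work in $\cat{D}(A)$ and consider the short exact sequence of brutal truncations $0 \to \sigma^{\geq N}(I) \to I \to \sigma^{\leq N-1}(I) \to 0$, which gives the distinguished triangle
\[
\sigma^{\geq N}(I) \to I \to \sigma^{\leq N-1}(I) \xrightarrow{+1}.
\]
Because $\mrm{H}^k(I) = 0$ for all $k > N$, the subcomplex $\sigma^{\geq N}(I)$ has cohomology concentrated in degree $N$ and equal to $Z^N$, hence $\sigma^{\geq N}(I) \simeq Z^N[-N]$ in $\cat{D}(A)$. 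Applying $\opn{Hom}_{\cat{D}(A)}(L, -)$ for an arbitrary left $A$-module $L$ and any $i \geq 1$ yields the exact segment
\[
\opn{Ext}^{i+N-1}_A(L, \sigma^{\leq N-1}(I)) \to \opn{Ext}^i_A(L, Z^N) \to \opn{Ext}^{i+N}_A(L, I).
\]
The right-hand term vanishes because $i + N > N \geq d_1 = \injdim_A(I)$, and the left-hand term vanishes because $\sigma^{\leq N-1}(I)$ is a complex of left-injective $A$-modules concentrated in degrees $\leq N-1$, so $\opn{Hom}_A(L, \sigma^{\leq N-1}(I))$ is itself a complex in degrees $\leq N-1$ and has no cohomology in degree $i + N - 1 \geq N$. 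Hence $\opn{Ext}^i_A(L, Z^N) = 0$ for all $i \geq 1$, so $Z^N$ is left-injective; by symmetry it is right-injective, and $J$ is the sought dualizing complex over $A$.
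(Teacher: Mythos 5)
Your proof is correct and follows the same overall strategy as the paper: take a bi-injective resolution $I$ of $R$ and pass to the smart truncation at degree $N=\max(d_1,d_2)$. The only difference is that the paper simply cites \cite[Proposition I.7.6]{RD} for the fact that this truncation has a bi-injective cocycle term $Z^N$, whereas you prove it directly via the distinguished triangle $\sigma^{\geq N}(I)\to I\to \sigma^{\leq N-1}(I)\to{}$, identifying $\sigma^{\geq N}(I)\simeq Z^N[-N]$ and reading off the vanishing of $\opn{Ext}^i_A(L,Z^N)$ for $i\geq 1$ from the two outer terms. This fills in the cited step with a clean self-contained argument that correctly handles the fact that $I$ is not a resolution of a single module, so naive dimension shifting does not apply; both approaches land in the same place.
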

\begin{proof}
Let $S$ be a bi-injective resolution of $R$.
It is possible that $S$ is not bounded,
but the fact that $\injdim_A(\opn{Rest}_A(S)) < \infty$ and $\injdim_{A^{\op}}(\opn{Rest}_{A^{\op}}(S)) < \infty$,
implies, 
as in the proof of \cite[Proposition I.7.6]{RD},
that for $n$ large enough, 
the smart truncation $\sigma_{\le n}(S)$ of $S$ below $n$ will be a bounded complex of $A$-$A$-bimodules which is injective on both sides, and which is quasi-isomorphic to $S$.
Hence, it is a dualizing complex over $A$.
\end{proof}

The reason we must work with dualizing complexes rather than weak dualizing complexes, 
is that they induce the covariant Grothendieck duality:
by \cite[Theorem 4.8]{InKr} (see also \cite[Theorem 4.5]{Pos}),
if $R$ is a dualizing complex over a left and right noetherian ring $A$, then the functor $R\otimes_A -:\cat{K}(\opn{Proj}(A)) \to \cat{K}(\opn{Inj}(A))$ is an equivalence of categories,
with the inverse equivalence given by $q\circ \opn{Hom}_A(R,-)$,
where $q:\cat{K}(\opn{Flat}(A)) \to \cat{K}(\opn{Proj}(A))$ is the right adjoint to the inclusion.

\subsection{Localizing and colocalizing subcategories}

Given a triangulated category $\mathcal{T}$,
a localizing subcategory of $\mathcal{T}$ is a full triangulated subcategory of $\mathcal{T}$ which is closed under infinite coproducts. For a non-empty subset $\mathbf{S}$ of objects of $\mathcal{T}$,
we denote by $\opn{Loc}(\mathbf{S})$ the localizing subcategory of $\mathcal{T}$ generated by $\mathbf{S}$,
which is, by definition, the smallest localizing subcategory of $\mathcal{T}$ which contains $\mathbf{S}$.
If $\opn{Loc}(\mathbf{S}) = \mathcal{T}$,
we will say that $\mathbf{S}$ generates $\mathcal{T}$.

Similarly, a colocalizing subcategory of $\mathcal{T}$ is a full triangulated subcategory of $\mathcal{T}$ which is closed under infinite products. 
Again, given a non-empty subset $\mathbf{S}$ of objects of $\mathcal{T}$,
we let $\opn{Coloc}(\mathbf{S})$ be the colocalizing subcategory of $\mathcal{T}$ cogenerated by $\mathbf{S}$,
that is, the smallest colocalizing subcategory of $\mathcal{T}$ which contains $\mathbf{S}$. 
If $\opn{Coloc}(\mathbf{S}) = \mathcal{T}$,
we say that $\mathbf{S}$ cogenerates $\mathcal{T}$.

For any ring $A$, by \cite[Proposition 2.2]{Rickard},
it holds that $\opn{Loc}(\opn{Proj}(A)) = \cat{D}(A)$.
Similarly, one shows that $\opn{Coloc}(\opn{Inj}(A)) = \cat{D}(A)$. A bit more generally, it is not hard to see that if $I$ is any injective cogenerator of $\opn{Mod}(A)$,
then the singleton $\{I\}$ cogenerates $\cat{D}(A)$.

Following ideas of Keller, it was demonstrated in \cite{Rickard} that it is interesting and fruitful to also consider the categories $\opn{Loc}(\opn{Inj}(A))$ and $\opn{Coloc}(\opn{Proj}(A))$. In this paper we will also consider $\opn{Coloc}(\opn{Flat}(A))$.

The next result is taken from our upcoming joint paper with Peder Thompson \cite{ShThompson}.
We thank Thompson for allowing us to include its proof here.
See \cite[Section 4.2]{BW} for similar results.

\begin{thm}\label{thm:injImplyFlat}
Let $A$ be a left coherent ring.
If $\mrm{Loc}(\opn{Inj}(A)) = \cat{D}(A)$ then
\[
\mrm{Coloc}(\opn{Flat}(A^{\opn{op}})) = \cat{D}(A^{\opn{op}}).
\]
\end{thm}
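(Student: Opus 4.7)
The plan is to use the character dual $D(-) := \opn{Hom}_{\mathbb{Z}}(-, \mathbb{Q}/\mathbb{Z})$, which yields an exact contravariant functor between $\cat{D}(A)$ and $\cat{D}(A^{\op})$ that sends coproducts to products and distinguished triangles to distinguished triangles. The essential use of left coherence is the classical fact that $D$ carries injective left $A$-modules to flat right $A$-modules: for any injective left $A$-module $I$ and any finitely presented left $A$-module $N$, a direct computation from a presentation $A^m \to A^n \to N \to 0$ gives $I^+ \otimes_A N \cong \opn{Hom}_A(N, I)^+$, and since $I$ is injective the functor $\opn{Hom}_A(-, I)$ is exact on finitely presented modules, whence $\opn{Tor}_1^A(I^+, N) = 0$ for all such $N$. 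Left coherence then implies that every finitely generated submodule of a finitely presented module is finitely presented, so flatness of a right $A$-module can be tested against finitely presented left $A$-modules; thus $I^+ \in \opn{Flat}(A^{\op})$. This is the main obstacle of the argument: it is the only step where the coherence hypothesis enters, and without it the subsequent double-dualization would not be useful.

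With the key lemma in hand, fix an injective cogenerator $E$ of $\opn{Mod}(A^{\op})$. By the general remark at the end of the preliminaries (the right-module analogue of the cited fact), the singleton $\{E\}$ cogenerates $\cat{D}(A^{\op})$, so it suffices to exhibit $E \in \opn{Coloc}(\opn{Flat}(A^{\op}))$. Applying $D$ once, $E^+ = D(E)$ lies in $\cat{D}(A)$, and the hypothesis $\opn{Loc}(\opn{Inj}(A)) = \cat{D}(A)$ places $E^+$ in $\opn{Loc}(\opn{Inj}(A))$. Applying $D$ a second time and using contravariance (which converts coproducts to products and triangles to triangles) together with the key lemma (which sends $\opn{Inj}(A)$ into $\opn{Flat}(A^{\op})$), we conclude
\[
E^{++} = D(E^+) \in \opn{Coloc}(\opn{Flat}(A^{\op})).
\]

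To finish, I would use that the evaluation map $\eta_E \colon E \to E^{++}$ is an injective homomorphism of right $A$-modules, since $\mathbb{Q}/\mathbb{Z}$ cogenerates the category of abelian groups. Because $E$ is injective, this monomorphism splits in $\opn{Mod}(A^{\op})$ and hence also in $\cat{D}(A^{\op})$, exhibiting $E$ as a direct summand of $E^{++}$. Since colocalizing subcategories of $\cat{D}(A^{\op})$ are closed under direct summands, we obtain $E \in \opn{Coloc}(\opn{Flat}(A^{\op}))$, which completes the proof.
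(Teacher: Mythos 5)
Your proof is correct and rests on the same two ingredients as the paper's: the character dual $D=\opn{Hom}_{\mathbb{Z}}(-,\mathbb{Q}/\mathbb{Z})$ viewed as an exact contravariant functor turning coproducts into products, and the left-coherence fact that $D$ carries injective left $A$-modules to flat right $A$-modules. The paper packages the first step as a localizing subcategory $\mathcal{S}=\{M\in\cat{D}(A)\mid D(M)\in\opn{Coloc}(\opn{Flat}(A^{\op}))\}$, observes that $\opn{Inj}(A)\subseteq\mathcal{S}$ so $\mathcal{S}=\cat{D}(A)$, and then applies $D$ only once to the single object $A$: since $A^+=D(A)$ is itself an injective cogenerator of $\opn{Mod}(A^{\op})$, the conclusion is immediate. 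Your variant applies $D$ twice to an arbitrary injective cogenerator $E$ and then recovers $E$ from $E^{++}$ by splitting the evaluation map. That works, but it needs two extra (standard, yet worth stating) facts: first, the implicit claim that $D\bigl(\opn{Loc}(\opn{Inj}(A))\bigr)\subseteq\opn{Coloc}(\opn{Flat}(A^{\op}))$ should be justified exactly by the localizing-subcategory argument (i.e.\ the preimage of $\opn{Coloc}(\opn{Flat}(A^{\op}))$ under $D$ is localizing and contains $\opn{Inj}(A)$); second, the closure of colocalizing subcategories under direct summands, which holds by the dual Eilenberg swindle but is an additional ingredient the paper does not need. Choosing $E=A^+$ from the start eliminates the second dualization and the splitting, giving the paper's shorter route.
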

\begin{proof}
Define the following full subcategory of $\cat{D}(A)$:
\[
\mathcal{S} := \{M \in \cat{D}(A) \mid \opn{Hom}_{\mathbb{Z}}(M,\mathbb{Q}/\mathbb{Z}) \in \mrm{Coloc}(\opn{Flat}(A^{\opn{op}}))\}.
\]
It follows from the definition that $\mathcal{S}$ is a localizing subcategory of $\cat{D}(A)$.
As $A$ is left coherent,
according to \cite[Lemma 3.1.4]{Xu},
if $I$ is a left injective $A$-module,
it holds that $\opn{Hom}_{\mathbb{Z}}(I,\mathbb{Q}/\mathbb{Z})$ is a flat $A^{\opn{op}}$-module,
which implies that $I \in \mathcal{S}$.
Hence, the fact that $\mrm{Loc}(\opn{Inj}(A)) = \cat{D}(A)$ implies that $\mathcal{S} = \cat{D}(A)$.
This implies that $\opn{Hom}_{\mathbb{Z}}(A,\mathbb{Q}/\mathbb{Z}) \in \mrm{Coloc}(\opn{Flat}(A^{\opn{op}}))$,
and since it is an injective cogenerator of $\cat{D}(A^{\opn{op}})$,
we deduce that 
$\mrm{Coloc}(\opn{Flat}(A^{\opn{op}})) = \cat{D}(A^{\opn{op}})$.
\end{proof}

\section{Projective cogeneration and acyclic complexes of injectives}

In this section we relate the property that the projective modules cogenerate the derived category over a noetherian ring with a dualizing complex to the contractiblity of bounded above acyclic complexes of injectives.
Our main result is the following:

\begin{thm}\label{thm:acyclic}
Let $A$ be a left and right noetherian ring which has a dualizing complex. 
Assume that $\opn{Coloc}(\opn{Proj}(A)) = \cat{D}(A)$.
Then every bounded above cochain complex of injective $A$-modules which is acyclic is null-homotopic.
\end{thm}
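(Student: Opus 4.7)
The plan is to transport the problem across the covariant Grothendieck duality equivalence $R\otimes_A(-) : \cat{K}(\opn{Proj}(A)) \xrightarrow{\simeq} \cat{K}(\opn{Inj}(A))$ recalled in the preliminaries, reduce it to a vanishing question in $\cat{D}(A)$, and then exploit the colocalizing hypothesis through a left-orthogonality argument.

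Given a bounded above acyclic $I \in \cat{K}(\opn{Inj}(A))$, I first produce a bounded above $P \in \cat{K}(\opn{Proj}(A))$ with $R\otimes_A P \simeq I$ in $\cat{K}(\opn{Inj}(A))$. The natural candidate is a K-projective model of $q\opn{Hom}^\bullet_A(R,I)$: the complex $\opn{Hom}^\bullet_A(R,I)$ is bounded above (because $R$ is bounded and $I$ is bounded above), and each of its components is a flat left $A$-module by the standard Iyengar--Krause flatness argument, which uses the bi-injectivity of $R$. Taking a bounded above projective resolution then yields the required bounded above $P$; in particular $P$ is K-projective. Since $R\otimes_A(-)$ is an equivalence, the theorem reduces to showing that this $P$ is null-homotopic.

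By K-projectivity of $P$, $P$ is null-homotopic if and only if $P \simeq 0$ in $\cat{D}(A)$. The hypothesis $\opn{Coloc}(\opn{Proj}(A)) = \cat{D}(A)$ implies that the left orthogonal ${}^{\perp}\opn{Proj}(A)$ in $\cat{D}(A)$ is zero: indeed, for any set $\mathbf{S}$ and any object $N$, the class $\{M \in \cat{D}(A) : \opn{Hom}_{\cat{D}(A)}(N, M[n]) = 0 \text{ for all } n\}$ is closed under shifts, triangles, and products, hence colocalizing, so ${}^{\perp}\opn{Coloc}(\mathbf{S}) = {}^{\perp}\mathbf{S}$. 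It therefore suffices to prove that $\opn{Hom}_{\cat{D}(A)}(P, Q[n]) = 0$ for every projective $A$-module $Q$ and every $n \in \mathbb{Z}$. Here I apply the duality equivalence a second time:
\[
\opn{Hom}_{\cat{D}(A)}(P, Q[n]) \;=\; \opn{Hom}_{\cat{K}(\opn{Proj}(A))}(P, Q[n]) \;\cong\; \opn{Hom}_{\cat{K}(\opn{Inj}(A))}(I, R\otimes_A Q[n]).
\]
The complex $R\otimes_A Q$ is a bounded complex of injective left $A$-modules: since $Q$ is a retract of a free $A$-module and $A$ is left noetherian, each $R^i\otimes_A Q$ is a retract of a direct sum of copies of the injective left $A$-module $R^i$, hence injective. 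So $R\otimes_A Q[n]$ is K-injective, the displayed Hom computes $\opn{Hom}_{\cat{D}(A)}(I, R\otimes_A Q[n])$, and this vanishes because $I$ is acyclic.

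The step I expect to be the most delicate is the first one, namely confirming that the preimage of $I$ under the duality can be represented by a bounded above (hence K-projective) complex of projectives; this is what makes the reduction ``$P$ null-homotopic iff $P \simeq 0$ in $\cat{D}(A)$'' legitimate. Once that reduction is secured, the rest of the argument is a clean combination of two applications of the duality equivalence with the orthogonality consequence of the colocalizing hypothesis.
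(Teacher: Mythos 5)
Your argument is essentially the paper's proof, just traversed in the opposite direction: you start from $I$, pass to the projective side, reduce to a vanishing statement in $\cat{D}(A)$, and invoke the orthogonality consequence of the colocalizing hypothesis, whereas the paper starts with a putative non-null-homotopic $D$, applies $q\opn{Hom}_A(R,-)$ to it, and derives a contradiction with $\opn{Coloc}(\opn{Proj}(A)) = \cat{D}(A)$. The two applications of the duality, the use of Bass--Papp to see that $R\otimes_A Q$ is a bounded complex of injectives (hence K-injective), and the ``${}^\perp\opn{Coloc}(\mathbf{S}) = {}^\perp\mathbf{S}$'' trick are all the same ingredients.

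One step needs tightening. You construct $P$ as a bounded above projective resolution of $\opn{Hom}^\bullet_A(R,I)$ and assert $R\otimes_A P \simeq I$ in $\cat{K}(\opn{Inj}(A))$ ``since $R\otimes_A(-)$ is an equivalence.'' That conclusion needs a homotopy equivalence $P \simeq q\opn{Hom}_A(R,I)$ in $\cat{K}(\opn{Proj}(A))$, and a quasi-isomorphism $P\to q\opn{Hom}_A(R,I)$ between complexes of projectives is \emph{not} automatically a homotopy equivalence unless both sides are K-projective. So you still need to know that $q\opn{Hom}_A(R,I)$ is itself bounded above (hence K-projective), which is exactly \cite[Theorem 2.7(2)]{InKr} — the very fact the paper invokes at this point. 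Once you know that, the projective-resolution detour is redundant: just set $P := q\opn{Hom}_A(R,I)$ directly. As written, your phrasing suggests the resolution is doing the work of producing a bounded above representative, but in fact that result from Iyengar--Krause is what makes the step legitimate, and it should be cited explicitly. With that correction the proof is complete and matches the paper's.
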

\begin{proof}
Let $R$ be a dualizing complex over $A$,
and suppose that there exist a bounded above complex of injective $A$-modules $D$ which is acyclic but not null-homotopic. As it is a complex of injectives,
we may view $D$ as an element of $\cat{K}(\opn{Inj}(A))$.
Take some projective left $A$-module $P$,
and note that by the Bass-Papp theorem,
the complex $R\otimes_A P$ is a bounded complex of injectives left $A$-modules.
Since $D$ is acyclic,
when considered as an element of $\cat{D}(A)$,
we have that $D \cong 0$,
which implies that
\[
\opn{Hom}_{\cat{D}(A)}(D,R\otimes_A P[n]) = 0
\]
for all $n \in \mathbb{Z}$.
Since $R\otimes_A P[n]$ is a bounded complex of injectives,
it is K-injective, 
so by \cite[Theorem 10.1.13]{YeBook},
it follows that
\[
0 = \opn{Hom}_{\cat{D}(A)}(D,R\otimes_A P[n]) = 
\opn{Hom}_{\cat{K}(A)}(D,R\otimes_A P[n]).
\]
As both sides are complexes of injectives,
we see that
\[
0 = \opn{Hom}_{\cat{K}(A)}(D,R\otimes_A P[n]) = 
\opn{Hom}_{\cat{K}(\opn{Inj}(A))}(D,R\otimes_A P[n]).
\]
According to \cite[Theorem 4.8]{InKr} (or \cite[Theorem 4.5]{Pos}), the functor 
\[
q\circ\opn{Hom}_{A}(R,-):\cat{K}(\opn{Inj}(A)) \to \cat{K}(\opn{Proj}(A))
\]
is an equivalence of categories. 
Applying this equivalence gives us 
\begin{equation}\label{eqn:keyEq}
0 = \opn{Hom}_{\cat{K}(\opn{Proj}(A))}(q\circ \opn{Hom}_{A}(R,D),q\circ \opn{Hom}_{A}(R,R\otimes_A P[n])).
\end{equation}
By definition, $\opn{Hom}_{A}(R,D)$ is a bounded above complex of flat $A$-modules.
According to \cite[Theorem 2.7(2)]{InKr},
this implies that
the complex $q\circ \opn{Hom}_{A}(R,D)$ is a bounded above complex of projective $A$-modules.
Next, let us analyze the right hand size, namely
$q\circ\opn{Hom}_{A}(R,R\otimes_A P[n])$.
To do this, we first consider its image in $\cat{D}(A)$.
There, we may calculate that
\begin{gather*}
q\circ \opn{Hom}_{A}(R,R\otimes_A P[n]) \cong 
\mrm{R}\opn{Hom}_{A}(R,R\otimes^{\mrm{L}}_A P[n]) \cong \\
\mrm{R}\opn{Hom}_{A}(R,R) \otimes^{\mrm{L}}_A P[n] \cong P[n]
\end{gather*}
where the first isomorphism follows from \cite[Theorem 2.7(1)]{InKr}, the second isomorphism follows from \cite[Theorem 12.9.10]{YeBook} and the last isomorphism follows from the fact that $R$ is a dualizing complex.
An alternative way to see this is to observe that we just applied the two mutually inverse equivalences $R\otimes_A -$ and $q\circ \opn{Hom}_{A}(R,-)$ to the element $P[n]\in \cat{K}(\opn{Proj}(A))$, 
which explains why the result is $P[n]$.

Going back to (\ref{eqn:keyEq}),
we have seen that $q\circ \opn{Hom}_{A}(R,D)$ is a bounded above complex of projective $A$-modules,
which implies that it is K-projective.
Hence, we deduce using \cite[Theorem 10.2.9]{YeBook} that
\begin{gather*}
0 = \opn{Hom}_{\cat{K}(\opn{Proj}(A))}(q\circ \opn{Hom}_{A}(R,D),q\circ \opn{Hom}_{A}(R,R\otimes_A P[n])) =\\
\opn{Hom}_{\cat{K}(A)}(q\circ \opn{Hom}_{A}(R,D),q\circ \opn{Hom}_{A}(R,R\otimes_A P[n])) =\\
\opn{Hom}_{\cat{D}(A)}(q\circ \opn{Hom}_{A}(R,D),q\circ \opn{Hom}_{A}(R,R\otimes_A P[n])) \cong\\
\opn{Hom}_{\cat{D}(A)}(q\circ \opn{Hom}_{A}(R,D),P[n]).
\end{gather*}
We may now finish the proof as follows.
Consider the full subcategory of $\cat{D}(A)$ given by
\[
\mathcal{S} = \{X \in \cat{D}(A) \mid \forall n, \opn{Hom}_{\cat{D}(A)}(q\circ \opn{Hom}_{A}(R,D),X[n]) = 0\}.
\]
Clearly $\mathcal{S}$ is closed under shifts, taking cones and taking products,
so it is a colocalizing subcategory.
Moreover, we have seen that $\mathcal{S}$ contains all projective $A$-modules,
so we deduce that 
 $\opn{Coloc}(\opn{Proj}(A)) \subseteq \mathcal{S}$.
 Since $q\circ \opn{Hom}_{A}(R,-)$ is an equivalence of categories, 
and since $D \ncong 0$ in $\cat{K}(\opn{Inj}(A))$,
we deduce that $q\circ \opn{Hom}_{A}(R,D) \ncong 0$ in  $\cat{K}(\opn{Proj}(A))$.
Since it is K-projective, it follows that 
$q\circ \opn{Hom}_{A}(R,D) \ncong 0$ in $\cat{D}(A)$.
 We deduce that the identity map 
 \[
 q\circ \opn{Hom}_{A}(R,D) \to q\circ \opn{Hom}_{A}(R,D)
 \]
 is not the zero map.
 This shows that
 \[
 \opn{Hom}_{\cat{D}(A)}(q\circ \opn{Hom}_{A}(R,D),q\circ \opn{Hom}_{A}(R,D)) \ne 0,
 \]
 so that $q\circ \opn{Hom}_{A}(R,D) \notin \mathcal{S}$.
 Hence, $\opn{Coloc}(\opn{Proj}(A)) \subseteq \mathcal{S} \subsetneq \cat{D}(A)$
as claimed.
\end{proof}

\begin{rem}\label{rem:flat}
Over a left and right noetherian ring with a dualizing complex, it was shown in \cite[Theorem]{Jor05} that every flat module has finite projective dimension,
which implies that over such a ring there is an equality
\[
\opn{Coloc}(\opn{Proj}(A)) = \opn{Coloc}(\opn{Flat}(A)),
\]
so that one may replace projective modules by flat modules in the statement of the above result.
\end{rem}

\begin{rem}\label{rem:dual}
The dual result to \cref{thm:acyclic}, namely, that if $A$ is a left and right noetherian ring with a dualizing complex, 
such that $\opn{Loc}(\opn{Inj}(A)) = \cat{D}(A)$,
then every bounded below cochain complex of projective $A$-modules which is acyclic is null-homotopic also holds.
The proof of this fact is essentially contained in the proof of \cite[Theorem 5.1]{ShFinDim},
but we did not realize this at the time of writing that proof.
\end{rem}

We now restrict our attention to commutative rings.
In that case we obtain the following stronger unconditional result:
\begin{cor}\label{cor:commutative}
Let $A$ be a commutative noetherian ring which has a dualizing complex.
Then every bounded above cochain complex of injective $A$-modules which is acyclic is null-homotopic.    
\end{cor}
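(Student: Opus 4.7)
The plan is to deduce this corollary directly from Theorem \ref{thm:acyclic}; the entire work is to verify its hypothesis, namely that $\opn{Coloc}(\opn{Proj}(A)) = \cat{D}(A)$, under the weaker assumption that $A$ is merely commutative noetherian with a dualizing complex (and not subject to the colocalization assumption stated in Theorem \ref{thm:acyclic}). I would carry this out in three short reductions.

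First, since $A$ is noetherian and admits a dualizing complex, Remark \ref{rem:flat} (which invokes Jorgensen's result that every flat module over such a ring has finite projective dimension) gives the equality $\opn{Coloc}(\opn{Proj}(A)) = \opn{Coloc}(\opn{Flat}(A))$, so it suffices to establish $\opn{Coloc}(\opn{Flat}(A)) = \cat{D}(A)$. Second, because any noetherian ring is left coherent, and because $A = A^{\op}$ when $A$ is commutative, Theorem \ref{thm:injImplyFlat} reduces the latter equality to the statement $\opn{Loc}(\opn{Inj}(A)) = \cat{D}(A)$. Third, for this final equality, I would invoke Neeman's classification of localizing subcategories of $\cat{D}(A)$ for commutative noetherian $A$, which says that such subcategories are determined by their support in $\opn{Spec}(A)$. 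For every prime $\mathfrak{p}$ the injective hull $E(A/\mathfrak{p})$ has support $\{\mathfrak{p}\}$; since all such hulls trivially lie in $\opn{Inj}(A)$, the localizing subcategory $\opn{Loc}(\opn{Inj}(A))$ has support equal to all of $\opn{Spec}(A)$, and therefore coincides with $\cat{D}(A)$. Feeding this back through the chain and applying Theorem \ref{thm:acyclic} concludes the proof.

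The only substantive external input is Neeman's classification theorem; once that is accepted as a black box, the argument is a formal concatenation of three reductions with no real obstacle. If one preferred to avoid citing Neeman, an alternative would be to show directly that the generator $A$ of $\cat{D}(A)$ lies in $\opn{Loc}(\opn{Inj}(A))$ --- for instance by taking a (possibly unbounded) injective resolution and realizing $A$ as a homotopy colimit of its smart truncations --- but handling the unbounded case cleanly is technically more delicate than the support-theoretic route, so I would prefer the Neeman-based argument.
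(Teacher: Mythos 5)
Your proof follows exactly the same chain of reductions as the paper's primary argument: reduce $\opn{Coloc}(\opn{Proj}(A))$ to $\opn{Coloc}(\opn{Flat}(A))$ via \cref{rem:flat}, then to $\opn{Loc}(\opn{Inj}(A)) = \cat{D}(A)$ via \cref{thm:injImplyFlat} and commutativity. The only variation is at the last step, where you deduce $\opn{Loc}(\opn{Inj}(A)) = \cat{D}(A)$ from Neeman's classification of localizing subcategories (via the supports of the $E(A/\p)$), whereas the paper simply cites \cite[Theorem 3.3]{Rickard} for this same fact; these are the same statement, so the proof is correct and essentially identical.
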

\begin{proof}
In view of \cref{thm:acyclic} we only need to show that $\opn{Coloc}(\opn{Proj}(A)) = \cat{D}(A)$,
or equivalently, by \cref{rem:flat},
that $\opn{Coloc}(\opn{Flat}(A)) = \cat{D}(A)$.
According to \cite[Theorem 3.3]{Rickard},
for any commutative noetherian ring $A$ it holds that $\opn{Loc}(\opn{Inj}(A)) = \cat{D}(A)$,
so the result follows from \cref{thm:injImplyFlat} using the fact that $A = A^{\op}$.
Alternatively, one may see directly that for any commutative noetherian ring it holds that $\opn{Coloc}(\opn{Flat}(A)) = \cat{D}(A)$, 
by using Neeman's classification \cite{Neeman2011} of colocalizing subcategories over commutative noetherian rings,
noticing that the flat $A$-module given by the $\p$-adic completion of the localization $A_{\p}$ has cosupport, 
in the sense of \cite{BIK},
equal to $\{\p\}$ for any $\p \in \opn{Spec}(A)$.
\end{proof}

\begin{cor}
Let $A$ be a commutative noetherian ring which has a dualizing complex,
and let $I,J$ be two bounded above cochain complexes of injective $A$-modules.
Then every quasi-isomorphism $\varphi:I \to J$ is a homotopy equivalence.
\end{cor}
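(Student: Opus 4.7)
The plan is to reduce the assertion to \cref{cor:commutative} via the mapping cone construction. Given the quasi-isomorphism $\varphi : I \to J$, I would form the mapping cone $C(\varphi)$, whose $n$-th term is $I^{n+1} \oplus J^n$ equipped with the standard differential. Since finite direct sums of injective modules are injective, each term of $C(\varphi)$ is injective, and since both $I$ and $J$ are bounded above, so is $C(\varphi)$. Thus $C(\varphi)$ is a bounded above cochain complex of injective $A$-modules.

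Next, because $\varphi$ is a quasi-isomorphism, the long exact sequence in cohomology associated to the distinguished triangle
\[
I \xrightarrow{\varphi} J \to C(\varphi) \to I[1]
\]
in $\cat{K}(A)$ forces $C(\varphi)$ to be acyclic. Applying \cref{cor:commutative} to $C(\varphi)$ then yields that $C(\varphi)$ is null-homotopic, that is, $C(\varphi) \cong 0$ in $\cat{K}(A)$.

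Finally, I would invoke the standard fact from the theory of triangulated categories that a morphism whose cone is a zero object is an isomorphism. Applied to the triangle above, together with the vanishing $C(\varphi) \cong 0$ in $\cat{K}(A)$, this shows that $\varphi$ is an isomorphism in the homotopy category $\cat{K}(A)$, which is precisely the statement that $\varphi$ is a homotopy equivalence.

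There is essentially no obstacle in this argument, as the real content has already been absorbed into \cref{cor:commutative}; this corollary is simply the standard reformulation of that result as the uniqueness-up-to-homotopy of ``injective coresolutions'' in the bounded above direction, and the mapping cone together with the triangulated formalism packages the translation cleanly.
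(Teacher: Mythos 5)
Your proof is correct and takes essentially the same approach as the paper: form the mapping cone of $\varphi$, observe it is an acyclic bounded above complex of injectives, invoke \cref{cor:commutative} to get that it is null-homotopic, and conclude $\varphi$ is a homotopy equivalence via the triangulated formalism of $\cat{K}(A)$.
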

\begin{proof}
Letting $D$ be the cone of $\varphi$,
the fact that $I$ and $J$ are both bounded above cochain complexes of injective $A$-modules implies that $D$ is also a bounded above cochain complex of injective $A$-modules.
The fact that $\varphi$ is a quasi-isomorphism implies that $D$ is acyclic, 
so by \cref{cor:commutative}, it is null-homotopic.
This in turn implies that $\varphi$ is a homotopy equivalence.
\end{proof}

\section{Injective finitistic dimension of Gorenstein DG-rings}

This section is dedicated to establishing the finiteness of the injective finitistic dimension of a Gorenstein DG-ring, 
under a further mild conditions that will hold in all of our applications.
This result will be used in the next section to connect the acyclicity of bounded above complexes of injectives with the projective finitistic dimension of a noetherian ring with a dualizing complex.

\begin{prop}\label{prop:flatProjdim}
Let $A$ be a non-positive DG-ring with bounded cohomology.
Suppose that every flat $\mrm{H}^0(A)$-module has finite projective dimension over $\mrm{H}^0(A)$.
Then every bounded DG-module $M \in \cat{D}^{\mrm{b}}(A)$ such that $\flatdim_A(M) < \infty$ satisfies $\projdim_A(M) < \infty$.
\end{prop}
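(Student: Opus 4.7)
The plan is to reduce the problem from the DG-ring $A$ to the underlying ring $\bar{A} := \mrm{H}^0(A)$ via the reduction functor $-\otimes^{\mrm{L}}_A \bar{A} : \cat{D}(A) \to \cat{D}(\bar{A})$, whose homological behaviour is developed in \cite{ShINJ,ShRed,YeDual,DGFinite,ShFinDim}. Setting $\bar{M} := \bar{A}\otimes^{\mrm{L}}_A M$, which lies in $\cat{D}^{\mrm{b}}(\bar{A})$ since $A$ has bounded cohomology and $M \in \cat{D}^{\mrm{b}}(A)$, the argument would proceed along the chain
\[
\flatdim_A(M) < \infty \;\Longrightarrow\; \flatdim_{\bar{A}}(\bar{M}) < \infty \;\Longrightarrow\; \projdim_{\bar{A}}(\bar{M}) < \infty \;\Longrightarrow\; \projdim_A(M) < \infty.
\]

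The first implication is a descent of flat dimension: applying $\bar{A}\otimes_A -$ to a semi-flat resolution of $M$ of length $\flatdim_A(M)$ yields a semi-flat resolution of $\bar{M}$ over $\bar{A}$ of the same length, using that the reduction carries flat DG-$A$-modules to flat $\bar{A}$-modules. For the second implication, take any projective resolution $P_{\bullet} \to \bar{M}$ over $\bar{A}$ and let $K_n$ denote its $n$-th syzygy, for $n := \flatdim_{\bar{A}}(\bar{M})$. By the standard syzygy computation with Tor, $K_n$ is a flat $\bar{A}$-module, so the hypothesis of the proposition gives $\projdim_{\bar{A}}(K_n) < \infty$. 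Splicing a finite projective resolution of $K_n$ into the truncated resolution $P_{n-1} \to \cdots \to P_0 \to \bar{M}$ yields $\projdim_{\bar{A}}(\bar{M}) \le n + \projdim_{\bar{A}}(K_n) < \infty$.

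The main obstacle, and the heart of the argument, is the third implication: lifting finite projective dimension from $\bar{M}$ over $\bar{A}$ back up to $M$ over the DG-ring $A$. I would invoke (or re-derive, using the boundedness of the cohomology of $A$) the ascent result for projective dimension under the reduction functor that is present in the DG-reduction machinery cited above, in the qualitative form that for $M \in \cat{D}^{\mrm{b}}(A)$ one has $\projdim_A(M) < \infty$ if and only if $\projdim_{\bar{A}}(\bar{M}) < \infty$. Concretely, to verify $\opn{Ext}_A^i(M, N) = 0$ for $i$ sufficiently large and arbitrary $N \in \cat{D}^{\mrm{b}}(A)$, one would devisse $N$ with respect to the standard t-structure on $\cat{D}(A)$ into its finitely many non-zero cohomology modules $\mrm{H}^j(N)$, each of which is naturally an $\bar{A}$-module, and translate each resulting Ext computation over $A$ into one of the form $\opn{Ext}_{\bar{A}}^{\ast}(\bar{M}, -)$, where the bound $\projdim_{\bar{A}}(\bar{M}) < \infty$ supplies the required vanishing. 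The amplitude $\amp(A) = \sup(A) - \inf(A)$ then enters as the uniform shift that controls the bound on $\projdim_A(M)$ in terms of $\projdim_{\bar{A}}(\bar{M})$, giving the desired finiteness.
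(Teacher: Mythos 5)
Your chain of implications is correct, but it follows a different route than the paper. The paper runs an induction on $\flatdim_A(M)$ over the DG-ring $A$ itself, using the spft-resolution machinery of \cite{Min} to reduce flat dimension one step at a time; the reduction functor to $\mrm{H}^0(A)$ is then invoked only in the base case $\flatdim_A(M)=0$, where one checks that $\bar{M}=\mrm{H}^0(A)\otimes^{\mrm{L}}_A M$ is concentrated in a single degree and is therefore a flat $\mrm{H}^0(A)$-\emph{module}, so the hypothesis applies verbatim. Your argument instead applies the reduction once, in full generality, obtains a bounded complex $\bar M$ of finite flat dimension over the \emph{ring} $\mrm{H}^0(A)$, and handles the complex-valued case directly via the classical syzygy-splicing argument; it then ascends projective dimension back to $A$. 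Both routes ultimately rest on the same reduction equalities from \cite[Corollary 1.5]{DGFinite}. What your approach buys is that the induction and the spft technology from \cite{Min} are avoided entirely, at the cost of needing the reduction equalities for a genuine complex $\bar M$ rather than for a flat module. One small caveat: the justification you sketch for the first implication (applying $\bar{A}\otimes_A -$ to a finite-length semi-flat resolution of $M$) is a bit loose — it is not obvious that such a resolution of finite length exists over a DG-ring, nor that termwise flatness passes through the reduction. But this is only a matter of citation: the implication $\flatdim_A(M)<\infty\Rightarrow\flatdim_{\mrm{H}^0(A)}(\bar M)<\infty$ (indeed equality) is exactly \cite[Corollary 1.5(ii)]{DGFinite}, which the paper's base case also uses, so no genuine gap results. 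Similarly, for the ascent step, your adjunction/d\'evissage sketch is valid and gives $\projdim_A(M)\le\projdim_{\mrm{H}^0(A)}(\bar M)$, but one can also simply quote \cite[Corollary 1.5(i)]{DGFinite} as the paper does.
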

\begin{proof}
By shifting if necessary, 
we may assume that $\sup(M) = 0$.
We prove the result by induction on $\flatdim_A(M)$. 
The fact that $\sup(M) = 0$ implies by the definition of flat dimension of DG-modules that $\flatdim_A(M) \ge 0$.
Assume first that $\flatdim_A(M) = 0$.
Let $\bar{M} = \mrm{H}^0(A)\otimes^{\mrm{L}}_A M$.
By \cite[proof of Proposition 3.1]{YeDual}, we have that $\sup(\bar{M}) = 0$,
and by \cite[Corollary 1.5(ii)]{DGFinite} it holds that $\flatdim_{\mrm{H}^0(A)}(\bar{M}) = 0$.
Moreover, the definition of flat dimension and the fact that $\inf(\mrm{H}^0(A)) = \sup(\mrm{H}^0(A)) = 0$ implies that $\inf(\bar{M}) = 0$.
We deduce that $\bar{M}$ is a flat $\mrm{H}^0(A)$-module,
so by assumption it has finite projective dimension over $\mrm{H}^0(A)$.
According to \cite[Corollary 1.5(i)]{DGFinite}, we know that $\projdim_A(M) = \projdim_{\mrm{H}^0(A)}(\bar{M})$,
establishing the claim.
Next, suppose that $\flatdim_A(M) > 0$.
By using \cite[Lemma 2.8]{Min},
we can find a DG-module of the form $F = \bigoplus_{j \in J} A$,
and a map of DG-modules $\varphi:F \to M$ such that $\mrm{H}^0(\varphi)$ is surjective.
Such a construction makes $\varphi$ the first step of a spft resolution of $M$,
in the sense of \cite[Definition 4.9]{Min}.
Embedding $\varphi$ inside a distinguished triangle
\begin{equation}\label{eqn:distForFlat}
N \to F \xrightarrow{\varphi} M \to N[1]
\end{equation}
in $\cat{D}(A)$,
since $\sup(F) = \sup(M) = 0$,
the fact that $\mrm{H}^0(\varphi)$ is surjective implies that $\sup(N) \le 0$.
By \cite[Lemma 4.10]{Min}, we have that $\flatdim_A(N) < \flatdim_A(M)$.
Hence, by the induction hypothesis, we know that $\projdim_A(N) < \infty$.
As $\projdim_A(F) = 0 <\infty$,
the distinguished triangle (\ref{eqn:distForFlat}) implies that $\projdim_A(M) < \infty$.
\end{proof}

The following fact is obvious for rings, 
but for DG-rings it might require a short proof:
\begin{prop}\label{prop:ineqflatproj}
Let $A$ be a non-positive DG-ring with bounded cohomology,
and let $M \in \cat{D}^{\mrm{b}}(A)$.
Then 
\[
\flatdim_A(M) \le \projdim_A(M).
\]
\end{prop}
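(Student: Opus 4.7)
The plan is to derive the inequality from the standard Hom-tensor duality applied to the character module $(-)^{\vee} := \mrm{R}\opn{Hom}_{\mathbb{Z}}(-,\mathbb{Q}/\mathbb{Z})$, which converts vanishing of Ext into vanishing of Tor. If $\projdim_A(M) = \infty$ there is nothing to prove, so set $n := \projdim_A(M) < \infty$.

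Given $N \in \cat{D}^{\mrm{b}}(A^{\op})$, the character module $N^{\vee} = \mrm{R}\opn{Hom}_{\mathbb{Z}}(N,\mathbb{Q}/\mathbb{Z})$ acquires a natural left $A$-DG-module structure from the right $A$-DG-module structure on $N$. Since $\mathbb{Q}/\mathbb{Z}$ is an injective cogenerator of abelian groups, the functor $\opn{Hom}_{\mathbb{Z}}(-,\mathbb{Q}/\mathbb{Z})$ is exact and faithful on cohomology, giving canonical isomorphisms $\mrm{H}^i(N^{\vee}) \cong \opn{Hom}_{\mathbb{Z}}(\mrm{H}^{-i}(N),\mathbb{Q}/\mathbb{Z})$. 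In particular $N^{\vee} \in \cat{D}^{\mrm{b}}(A)$ and $\sup(N^{\vee}) = -\inf(N)$.

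Next I would invoke the derived Hom-tensor adjunction to obtain
\[
\mrm{R}\opn{Hom}_{\mathbb{Z}}(N \otimes^{\mrm{L}}_A M,\,\mathbb{Q}/\mathbb{Z}) \;\simeq\; \mrm{R}\opn{Hom}_A(M,\,N^{\vee})
\]
in $\cat{D}(\mathbb{Z})$. Taking $\mrm{H}^i$ and using that $\mathbb{Q}/\mathbb{Z}$ is injective produces a natural isomorphism
\[
\opn{Ext}_A^i(M,N^{\vee}) \;\cong\; \opn{Hom}_{\mathbb{Z}}(\opn{Tor}_i^A(N,M),\,\mathbb{Q}/\mathbb{Z})
\]
for every $i \in \mathbb{Z}$. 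By the definition of $n = \projdim_A(M)$, the left-hand side vanishes for all $i > n + \sup(N^{\vee}) = n - \inf(N)$. Faithfulness of the character functor then forces $\opn{Tor}_i^A(N,M) = 0$ for all $N \in \cat{D}^{\mrm{b}}(A^{\op})$ and all $i > n - \inf(N)$, which is exactly the statement $\flatdim_A(M) \le n$.

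The only point that requires a little care in the DG-setting, as opposed to the classical ring-theoretic setting where the result follows simply because projective modules are flat, is that the displayed Hom-tensor adjunction genuinely holds on the derived level and that the supremum of $N^{\vee}$ behaves as stated. Both are entirely standard: the adjunction is implemented through K-projective resolutions of $M$ over $A$ together with the fact that $\mathbb{Q}/\mathbb{Z}$ is already K-injective over $\mathbb{Z}$, in the framework of \cite{YeBook}, and the sup/inf computation is immediate from faithful exactness of $\opn{Hom}_{\mathbb{Z}}(-,\mathbb{Q}/\mathbb{Z})$. The rest of the argument is formal bookkeeping with the numerical shifts appearing in the definitions of $\projdim_A$ and $\flatdim_A$.
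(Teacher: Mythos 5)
Your proof is correct, and it takes a genuinely different route from the paper's. The paper argues by reduction along $A \to \mrm{H}^0(A)$: it passes to $\bar{M} = \mrm{H}^0(A)\otimes^{\mrm{L}}_A M$, invokes the transfer results of \cite[Corollary 1.5]{DGFinite} to say that $\projdim_A(M) = \projdim_{\mrm{H}^0(A)}(\bar{M})$ and $\flatdim_A(M) = \flatdim_{\mrm{H}^0(A)}(\bar{M})$, and then uses the classical ring-theoretic fact that projective resolutions are flat resolutions. You instead work directly at the level of the definitions via Pontryagin duality: the derived tensor-Hom adjunction with the injective cogenerator $\mathbb{Q}/\mathbb{Z}$ gives $\opn{Ext}_A^i(M,N^{\vee}) \cong \opn{Hom}_{\mathbb{Z}}(\opn{Tor}_i^A(N,M),\mathbb{Q}/\mathbb{Z})$, and the numerical bookkeeping $\sup(N^{\vee}) = -\inf(N)$ converts the Ext-vanishing defining $\projdim_A(M)$ into exactly the Tor-vanishing defining $\flatdim_A(M)$. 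Your argument is more self-contained and elementary in that it avoids the transfer theorems for homological dimensions under reduction, relying only on standard adjunctions and the faithfulness of the character functor; the paper's version, on the other hand, fits naturally into the reduction/coreduction machinery it develops and uses throughout, and sidesteps the (mild) care required to check that the tensor-Hom adjunction and the bimodule structure on $N^{\vee}$ behave correctly for DG-modules with Koszul signs. Both are valid; yours is arguably the more classical and portable proof.
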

\begin{proof}
We may assume that $\projdim_A(M) < \infty$,
for otherwise there is nothing to prove.
Let $\bar{M} = \mrm{H}^0(A)\otimes^{\mrm{L}}_A M \in \cat{D}(\mrm{H}^0(A))$.
We know that $\sup(\bar{M}) = \sup(M) < \infty$,
and by \cite[Corollary 1.5(i)]{DGFinite} we have that $\projdim_{\mrm{H}^0(A)}(\bar{M}) = \projdim_A(M) < \infty$,
so in particular $\bar{M}$ is a bounded complex.
Since flat and projective dimension over rings can be detected from the length of flat and projective resolutions, clearly $\flatdim_{\mrm{H}^0(A)}(\bar{M}) \le \projdim_{\mrm{H}^0(A)}(\bar{M})$.
Finally, by \cite[Corollary 1.5(ii)]{DGFinite} we have that $\flatdim_A(M) = \flatdim_{\mrm{H}^0(A)}(\bar{M})$,
so we obtain
\[
\flatdim_A(M) = \flatdim_{\mrm{H}^0(A)}(\bar{M}) \le \projdim_{\mrm{H}^0(A)}(\bar{M}) = \projdim_A(M).
\]
\end{proof}

\begin{prop}\label{prop:FFDifFPD}
Let $A$ be a non-positive DG-ring with bounded cohomology.
Suppose that every flat $\mrm{H}^0(A)$-module has finite projective dimension over $\mrm{H}^0(A)$.
If $\opn{FPD}(A) < \infty$ then $\opn{FFD}(A) \le \opn{FPD}(A) < \infty$.
\end{prop}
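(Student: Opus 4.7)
The plan is to deduce this essentially as a direct corollary of the two preceding propositions, together with the definitions of $\opn{FPD}$ and $\opn{FFD}$ for DG-rings. The only real content is to check that the normalizing shift by $\inf(M)$ behaves the same way on both sides, which is automatic since the inequality compares $\flatdim_A(M)$ and $\projdim_A(M)$ of the \emph{same} DG-module.

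First I would take an arbitrary $M \in \cat{D}^{\mrm{b}}(A)$ with $\flatdim_A(M) < \infty$; to show $\opn{FFD}(A) \le \opn{FPD}(A)$, it suffices to prove that $\flatdim_A(M) + \inf(M) \le \opn{FPD}(A)$. By the hypothesis that every flat $\mrm{H}^0(A)$-module has finite projective dimension over $\mrm{H}^0(A)$, \cref{prop:flatProjdim} applies and gives $\projdim_A(M) < \infty$. In particular, $M$ qualifies as one of the DG-modules over which $\opn{FPD}(A)$ is taken, so
\[
\projdim_A(M) + \inf(M) \le \opn{FPD}(A).
\]

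Next I would invoke \cref{prop:ineqflatproj} to obtain the DG-ring version of the standard inequality $\flatdim_A(M) \le \projdim_A(M)$. Adding $\inf(M)$ to both sides and chaining with the previous display yields
\[
\flatdim_A(M) + \inf(M) \le \projdim_A(M) + \inf(M) \le \opn{FPD}(A).
\]
Taking the supremum over all bounded $M$ of finite flat dimension gives $\opn{FFD}(A) \le \opn{FPD}(A) < \infty$, as desired.

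There is no real obstacle here: the only subtlety would be if $\flatdim_A(M) < \infty$ were compatible with $\projdim_A(M) = \infty$ for some DG-module, which is exactly what \cref{prop:flatProjdim} rules out under the hypothesis on $\mrm{H}^0(A)$, and the shift convention in the definitions of $\opn{FPD}$ and $\opn{FFD}$ matches, so both normalizations cancel in the comparison.
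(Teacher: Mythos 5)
Your proof is correct and is essentially identical to the paper's argument: both reduce to \cref{prop:flatProjdim} to pass from finite flat dimension to finite projective dimension, then chain the inequality from \cref{prop:ineqflatproj} with the definition of $\opn{FPD}(A)$ and take a supremum. Your remark about the matching normalization by $\inf(M)$ is a correct observation, though minor since both finitistic dimensions are normalized the same way by design.
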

\begin{proof}
Set $n = \opn{FPD}(A) < \infty$, and let $M \in \cat{D}^{\mrm{b}}(A)$ be a DG-module with $\flatdim_A(M) < \infty$.   
By \cref{prop:flatProjdim}, we know that $\projdim_A(M) < \infty$.
Hence, by the definition of $\opn{FPD}(A)$, we deduce that $\projdim_A(M) + \inf(M) \le n$.
From \cref{prop:ineqflatproj} we obtain the inequality
\[
\flatdim_A(M) + \inf(M) \le \projdim_A(M) + \inf(M) \le n.
\]
The definition of $\opn{FFD}(A)$ now implies that 
\[
\opn{FFD}(A) \le n = \opn{FPD}(A) < \infty.
\]
\end{proof}

We will say that a DG-ring $A$ is left noetherian if the ring $\mrm{H}^0(A)$ is left noetherian and for all $i<0$,
the left $\mrm{H}^0(A)$-module $\mrm{H}^i(A)$ is finitely generated. Similarly for right noetherian. 
This definition is justified by \cite[Theorem 6.6]{ShINJ}.

\begin{prop}\label{prop:ffdFID}
Let $A$ be a left and right noetherian DG-ring with bounded cohomology.
Then $\opn{FFD}(A) = \opn{FID}(A^{\op})$.
\end{prop}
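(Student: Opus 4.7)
The plan is to establish the two inequalities $\opn{FFD}(A) \le \opn{FID}(A^{\op})$ and $\opn{FID}(A^{\op}) \le \opn{FFD}(A)$ using the Pontryagin character duality functor $M \mapsto M^{+} := \Hom_{\mathbb{Z}}(M, \mathbb{Q}/\mathbb{Z})$, which sends a DG-module $M$ over $A$ to a DG-module $M^{+}$ over $A^{\op}$ satisfying $\inf(M^{+}) = -\sup(M)$ and $\sup(M^{+}) = -\inf(M)$. The first inequality does not require noetherianity: the Hom-tensor adjunction produces a natural isomorphism $\opn{Ext}^{i}_{A^{\op}}(N, M^{+}) \cong \Hom_{\mathbb{Z}}(\opn{Tor}^{A}_{i}(N, M), \mathbb{Q}/\mathbb{Z})$ for every $N \in \cat{D}(A^{\op})$ and $i \in \mathbb{Z}$, and since $\mathbb{Q}/\mathbb{Z}$ is an injective cogenerator of $\mathbb{Z}$-modules, the definitions of $\flatdim$ and $\injdim$ from the preliminaries (both phrased via an $\inf$-shift of the test object) match up to give $\flatdim_{A}(M) = \injdim_{A^{\op}}(M^{+})$. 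Combined with $\sup(M^{+}) = -\inf(M)$, this yields
\[
\flatdim_{A}(M) + \inf(M) = \injdim_{A^{\op}}(M^{+}) - \sup(M^{+}) \le \opn{FID}(A^{\op})
\]
whenever $\flatdim_{A}(M) < \infty$, and taking the supremum proves $\opn{FFD}(A) \le \opn{FID}(A^{\op})$.

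For the reverse inequality, given any $N \in \cat{D}^{\mrm{b}}(A^{\op})$ with $\injdim_{A^{\op}}(N) = d < \infty$, I would set $M := N^{+} \in \cat{D}^{\mrm{b}}(A)$, so that $\inf(M) = -\sup(N)$. If one can prove $\flatdim_{A}(M) = d$, then $\flatdim_{A}(M) + \inf(M) = d - \sup(N)$ exactly matches the contribution of $N$ to $\opn{FID}(A^{\op})$, yielding $\opn{FFD}(A) \ge \opn{FID}(A^{\op})$. By the identity of the first part applied to $M = N^{+}$, $\flatdim_{A}(N^{+}) = \injdim_{A^{\op}}(N^{++})$, so it suffices to show that the canonical pure embedding $N \hookrightarrow N^{++}$ preserves injective dimension. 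The noetherian hypothesis enters here twice. First, over the noetherian DG-ring $A^{\op}$, injective dimension is detected by Ext with bounded DG-modules with finitely generated cohomology (a DG-analog of Bass's criterion, provable by reducing to the classical case over $\mrm{H}^{0}(A^{\op})$ via the coreduction functor, which preserves injective dimension and permits detection by finitely generated $\mrm{H}^{0}(A^{\op})$-modules viewed as DG-modules over $A^{\op}$). Second, for such a finitely generated $X$, the Hom-tensor adjunction refines to a natural isomorphism $\opn{Ext}^{i}_{A^{\op}}(X, N)^{+} \cong \opn{Tor}^{A}_{i}(X, N^{+})$, valid because $X$ is finitely presented over the noetherian $A^{\op}$. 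Combining the two refinements with the identity of the first part applied to $M = N^{+}$ gives
\[
\opn{Ext}^{i}_{A^{\op}}(X, N^{++}) \cong \opn{Tor}^{A}_{i}(X, N^{+})^{+} \cong \opn{Ext}^{i}_{A^{\op}}(X, N)^{++},
\]
and since $(-)^{+}$ is faithful, this vanishes if and only if $\opn{Ext}^{i}_{A^{\op}}(X, N) = 0$; the Bass-type detection then yields $\injdim_{A^{\op}}(N^{++}) = \injdim_{A^{\op}}(N) = d$, completing the argument.

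The main obstacle is the DG refinement $\opn{Ext}^{i}_{A^{\op}}(X, N)^{+} \cong \opn{Tor}^{A}_{i}(X, N^{+})$ for a bounded $X$ with finitely generated cohomology over the noetherian DG-ring $A^{\op}$. In the classical ring setting this is a standard consequence of $X$ being finitely presented, but in the DG-setting one needs $X$ to admit a bounded-above resolution by finitely generated semi-free DG-modules so that the tensor product and the Hom both compute the derived functors correctly; this existence is guaranteed by the noetherian description of finitely generated DG-modules in \cite[Theorem 6.6]{ShINJ}. The bookkeeping of the $\inf$ and $\sup$ normalizations in the Bass-type detection, and the matching back to the Ext/Tor-based definitions of $\injdim$ and $\flatdim$ in the preliminaries, is essentially formal but must be verified carefully.
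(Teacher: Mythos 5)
Your proposal is correct and follows the standard Matlis-duality argument via Pontryagin duals $M \mapsto \Hom_{\mathbb{Z}}(M,\mathbb{Q}/\mathbb{Z})$, carefully tracking the $\inf$/$\sup$ normalizations and reducing the DG-level finiteness lemmas to $\mrm{H}^0(A)$. This is essentially the same approach as the paper, which simply cites Matlis's theorem \cite{Mat59} for rings and \cite[Corollary 6.16]{DGFinite} for commutative DG-rings and observes that the same proof carries over; your write-up fills in the details of precisely that argument.
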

\begin{proof}
For rings, this is a well known result due to Matlis \cite[Theorem 1]{Mat59}.
For commutative DG-rings, this was shown in \cite[Corollary 6.16]{DGFinite},
and the same proof carries over to this noncommutative situation.
\end{proof}

Here is the main result of this section.
\begin{thm}\label{thm:FID}
Let $A$ be a left and right noetherian DG-ring with bounded cohomology,
and suppose that every flat $\mrm{H}^0(A)$-module has finite projective dimension over $\mrm{H}^0(A)$.
If $\injdim_A(A) < \infty$ then $\opn{FID}(A^{\op}) < \infty$.
\end{thm}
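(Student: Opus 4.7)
The plan is to derive $\opn{FID}(A^{\op}) < \infty$ by bootstrapping through the finitistic dimensions $\opn{FPD}(A)$ and $\opn{FFD}(A)$, assembling the results of this section into the chain
\[
\opn{FID}(A^{\op}) \;=\; \opn{FFD}(A) \;\le\; \opn{FPD}(A) \;\le\; \opn{FPD}(\mrm{H}^0(A)) \;<\; \infty.
\]
The equality is \cref{prop:ffdFID}; the first inequality is \cref{prop:FFDifFPD}, whose hypothesis on flat $\mrm{H}^0(A)$-modules is precisely the one given; and the finiteness on the right will come from Jensen's classical theorem, which asserts that over a noetherian ring $R$, every flat $R$-module has finite projective dimension if and only if $\opn{FPD}(R) < \infty$. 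Applying this to $\mrm{H}^0(A)$, which is noetherian because $A$ is a noetherian DG-ring, yields $\opn{FPD}(\mrm{H}^0(A)) < \infty$ at the outset.

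The substantive step is the middle inequality $\opn{FPD}(A) \le \opn{FPD}(\mrm{H}^0(A))$. I would prove it by reducing each $M \in \cat{D}^{\mrm{b}}(A)$ with $\projdim_A(M) < \infty$ to $\bar{M} := \mrm{H}^0(A) \otimes^{\mrm{L}}_A M$. The amplitude inequalities for the derived tensor product, combined with $\inf(\mrm{H}^0(A)) = \sup(\mrm{H}^0(A)) = 0$, give $\inf(\bar{M}) \ge \inf(M)$ and $\sup(\bar{M}) \le \sup(M)$, placing $\bar{M}$ in $\cat{D}^{\mrm{b}}(\mrm{H}^0(A))$; meanwhile \cite[Corollary 1.5(i)]{DGFinite} supplies the key identity $\projdim_{\mrm{H}^0(A)}(\bar{M}) = \projdim_A(M)$. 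Feeding these into the definition of $\opn{FPD}(\mrm{H}^0(A))$ yields
\[
\projdim_A(M) + \inf(M) \;\le\; \projdim_{\mrm{H}^0(A)}(\bar{M}) + \inf(\bar{M}) \;\le\; \opn{FPD}(\mrm{H}^0(A)),
\]
which is the desired uniform bound on $\opn{FPD}(A)$.

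The main obstacle is the technical handling of the DG reduction: one must verify that \cite[Corollary 1.5(i)]{DGFinite} gives the equality of projective dimensions for every $M$ of finite projective dimension, not merely for the special DG-modules appearing in the proofs of \cref{prop:flatProjdim} and \cref{prop:ineqflatproj}, and that the amplitude inequalities above apply with the generality needed. Once $\opn{FPD}(A) < \infty$ is established, \cref{prop:FFDifFPD} and \cref{prop:ffdFID} mechanically assemble to give $\opn{FID}(A^{\op}) < \infty$. The Gorenstein hypothesis $\injdim_A(A) < \infty$ situates the theorem in the Gorenstein DG-ring framework of this section and makes the conclusion $\opn{FID}(A^{\op}) < \infty$ the natural Matlis-dual counterpart used in the subsequent applications; if a more direct argument is desired that uses this hypothesis essentially, one would exploit the duality $\mrm{R}\opn{Hom}_A(-,A)$ acting on DG-modules of finite projective dimension to convert flat-dimension bounds into injective-dimension bounds on the opposite side, but the reduction strategy above already delivers the qualitative finiteness claimed.
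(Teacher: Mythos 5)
Your assembly of the last two steps — $\opn{FID}(A^{\op}) = \opn{FFD}(A)$ via \cref{prop:ffdFID} and $\opn{FFD}(A) \le \opn{FPD}(A)$ via \cref{prop:FFDifFPD} — matches the paper exactly. The gap is in how you obtain $\opn{FPD}(A) < \infty$, and it is a genuine one: the claimed ``Jensen's classical theorem'' is misstated. Jensen's result gives only the implication $\opn{FPD}(R) < \infty \implies$ every flat $R$-module has finite projective dimension; the converse fails. A standard counterexample is a countable Nagata ring $R$: it is commutative noetherian of infinite Krull dimension, so $\opn{FPD}(R) = \infty$, yet by the Gruson--Jensen cardinality bound every flat $R$-module has projective dimension at most $1$. (This is exactly the phenomenon the paper exploits in part~(d) of \cref{thm:main}.) So the hypothesis on flat $\mrm{H}^0(A)$-modules does not, by itself, force $\opn{FPD}(\mrm{H}^0(A)) < \infty$, and the rightmost inequality in your chain is unjustified.

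This shows up as a more structural problem: your argument never uses $\injdim_A(A) < \infty$, which is the load-bearing hypothesis. The paper's proof obtains $\opn{FPD}(A) < \infty$ directly from $\injdim_A(A) < \infty$ by invoking \cite[Theorem~3.5]{ShFinDim}, the Gorenstein DG-ring finitistic dimension theorem, and then feeds this into \cref{prop:FFDifFPD} and \cref{prop:ffdFID} precisely as you do. Your reduction inequality $\opn{FPD}(A) \le \opn{FPD}(\mrm{H}^0(A))$ via $\bar M = \mrm{H}^0(A)\otimes^{\mrm{L}}_A M$ and \cite[Corollary~1.5(i)]{DGFinite} is not itself the issue — it is a reasonable-looking comparison — but it only defers the problem to $\mrm{H}^0(A)$, where, absent the Gorenstein input, the finiteness of $\opn{FPD}$ cannot be deduced from the flat-module hypothesis alone. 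To repair the argument, replace the appeal to ``Jensen's theorem'' with the citation of \cite[Theorem~3.5]{ShFinDim}, which uses $\injdim_A(A) < \infty$ to produce $\opn{FPD}(A) < \infty$; the rest of your chain then goes through.
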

\begin{proof}
According to \cite[Theorem 3.5]{ShFinDim},
the fact that $\injdim_A(A) < \infty$ implies that $\opn{FPD}(A) < \infty$.
By \cref{prop:FFDifFPD} this implies that $\opn{FFD}(A) < \infty$,
and from \cref{prop:ffdFID} we get that $\opn{FID}(A^{\op}) < \infty$.
\end{proof}

Given a ring $A$, and a complex of $A$-$A$-bimodules $\phantom{ }_A M_A$ with the property that $\sup(M) < 0$,
the trivial extension DG-ring $A\skewtimes M$ was defined in \cite[Definition 4.1]{ShFinDim}.
It is a non-positive DG-ring equipped with a map of DG-rings $\tau_{A,M}: A\to A\skewtimes M$,
such that as a complex of bimodules over $A$ it is isomorphic to $A\oplus M$,
such that $\mrm{H}^0(A\skewtimes M) = A$, and the composition $A \xrightarrow{\tau_{A,M}} A\skewtimes M \to \mrm{H}^0(A\skewtimes M) = A$ is equal to $1_A$.

\begin{cor}\label{cor:InfFID}
Let $A$ be a left and right noetherian ring,
and let $R$ be a dualizing complex over $A$ such that $\sup(R) < 0$.
Then the DG-ring $B = A\skewtimes R$ satisfies $\opn{FID}(B) < \infty$ and $\opn{FID}(B^{\op}) < \infty$.
\end{cor}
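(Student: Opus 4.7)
The plan is to obtain both statements as applications of \cref{thm:FID} — once to the DG-ring $B = A\skewtimes R$ itself (yielding $\opn{FID}(B^{\op}) < \infty$) and once to the opposite DG-ring $B^{\op}$ (yielding $\opn{FID}(B) < \infty$). Recall that $B^{\op}$ is canonically identified with $A^{\op}\skewtimes \opn{Rest}(R)$ where $R$ is regarded with its swapped bimodule structure, and that the hypothesis that $R$ is a dualizing complex over $A$ is left-right symmetric. Thus if I can check that in either case the three hypotheses of \cref{thm:FID} hold, I am done.

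The first hypothesis is that $B$ is a left and right noetherian DG-ring with bounded cohomology. From the construction of the trivial extension recalled just before the corollary, $B \cong A \oplus R$ as a complex of $A$-$A$-bimodules, and the assumption $\sup(R) < 0$ gives $\mrm{H}^0(B) = A$ and $\mrm{H}^i(B) \cong \mrm{H}^i(R)$ for $i < 0$. Since $R$ is a dualizing complex, it is bounded, its cohomology modules are finitely generated over $A$ on both sides, and $A$ itself is left and right noetherian. So $B$ is a left and right noetherian DG-ring with bounded cohomology, and the same holds symmetrically for $B^{\op}$. The second hypothesis is that every flat $\mrm{H}^0(B)$-module has finite projective dimension over $\mrm{H}^0(B) = A$; this is immediate from the theorem of J{\o}rgensen cited in \cref{rem:flat}, since $A$ has a dualizing complex. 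The symmetric statement for $B^{\op}$ uses that $A^{\op}$ also has a dualizing complex (namely $R$ viewed from the other side).

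The main obstacle, and the only nontrivial input, is the third hypothesis: $\injdim_B(B) < \infty$ and $\injdim_{B^{\op}}(B^{\op}) < \infty$. This is the DG analogue of the classical fact that $A \ltimes R$ is Gorenstein when $R$ is a dualizing complex; in the present DG setting the corresponding result is already part of the author's earlier work on trivial extensions by dualizing complexes in \cite{ShFinDim}, so I would quote it directly. In spirit, one proves it by using the bimodule decomposition $B \cong A \oplus R$ together with the defining isomorphisms $A \xrightarrow{\simeq} \mrm{R}\opn{Hom}_A(R,R)$ and $A \xrightarrow{\simeq} \mrm{R}\opn{Hom}_{A^{\op}}(R,R)$ to compute $\mrm{R}\opn{Hom}_B(-,B)$ in terms of data over $A$, from which finiteness of $\injdim_B(B)$ follows from boundedness of $R$ and finiteness of $\injdim_A(\opn{Rest}_A(R))$ (and symmetrically for $B^{\op}$).

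With all three hypotheses verified in both cases, \cref{thm:FID} applied to $B$ gives $\opn{FID}(B^{\op}) < \infty$, and applied to $B^{\op}$ gives $\opn{FID}(B) < \infty$, completing the proof.
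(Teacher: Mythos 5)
Your proof is correct and follows essentially the same route as the paper: both verify the hypotheses of \cref{thm:FID} (noetherianity and bounded cohomology of $B$, finiteness of projective dimension of flat $A$-modules via J{\o}rgensen, and $\injdim_B(B) < \infty$ and $\injdim_{B^{\op}}(B^{\op}) < \infty$ via \cite[Theorem 4.5]{ShFinDim}), then apply \cref{thm:FID} to $B$ and to $B^{\op}$. The paper compresses this into three sentences, but the underlying argument and the citations used are the same.
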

\begin{proof}
By \cite[Theorem]{Jor05}, 
the fact that $A$ has a dualizing complex implies that every flat $A$-module has finite projective dimension.
According to \cite[Theorem 4.5]{ShFinDim}, 
the DG-ring $B$ satisfies $\injdim_B(B) < \infty$ and $\injdim_{B^{\op}}(B^{\op}) < \infty$.
Since $\mrm{H}^0(B) = A$,
the result now follows from \cref{thm:FID}.
\end{proof}

\section{Acyclic complexes of injectives and finitistic dimension}

The aim of this section is to relate properties of acyclic complexes of injective modules to the injective finitistic dimension of a noetherian ring with a dualizing complex.

We first prove the following implication of a ring with a dualizing complex having infinite finitistic dimension,
which is a dual of \cite[Theorem 4.12]{ShFinDim}. 
This will then be used to construct a certain acyclic complex of injectives in such a situation.

\begin{thm}\label{thm:seperation}
Let $A$ be a left and right noetherian ring with a dualizing complex $S$.
Suppose that $\opn{FID}(A) = +\infty$.
Then there is an integer $j \in \mathbb{Z}$,
an increasing sequence of natural numbers $a_n$,
with $\lim_{n\to +\infty} a_n = +\infty$,
and for each $n$,
a left $A$-module $M_n$,
with $\injdim_A(M_n) = a_n$,
and $\opn{Ext}^{a_n}_A(R,M_n) \ne 0$,
where $R = S[j]$.
\end{thm}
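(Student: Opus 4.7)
The plan is to pass through a trivial extension DG-ring. Fix $j_1 \in \mathbb{Z}$ with $j_1 > \sup(S)$ so that $R_1 := S[j_1]$ satisfies $\sup(R_1) < 0$, and form $B := A \skewtimes R_1$. By \cref{cor:InfFID}, $c := \opn{FID}(B) < \infty$. For the canonical DG-ring map $\tau : A \to B$, the coinduction $\tau_*(M) := \mrm{R}\opn{Hom}_A(B, M)$ decomposes as an $A$-complex as $M \oplus \mrm{R}\opn{Hom}_A(R_1, M)$ (via $B \cong A \oplus R_1$ as $A$-bimodules), and the adjunction $\opn{Ext}^i_B(N, \tau_*M) \cong \opn{Ext}^i_A(\tau^*N, M)$, combined with the essential surjectivity of $\tau^* : \cat{D}^{\mrm{b}}(B) \to \cat{D}^{\mrm{b}}(A)$ (realised by the retraction $\pi : B \to A$ with $\pi\tau = 1_A$, so that $\tau^*\pi^* = \opn{id}$), yields the key identity $\injdim_B(\tau_*M) = \injdim_A(M)$.

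Apply this to a sequence $M_n^{(0)}$ provided by $\opn{FID}(A) = +\infty$ with $d_n := \injdim_A(M_n^{(0)}) \to +\infty$. The DG-normalised bound $\opn{FID}(B) \ge \injdim_B(\tau_*M_n^{(0)}) - \sup(\tau_*M_n^{(0)}) = d_n - \max\{0,\, t_1(M_n^{(0)})\}$, where $t_1(M) := \sup\{i : \opn{Ext}^i_A(R_1, M) \ne 0\}$, forces $t_1(M_n^{(0)}) \ge d_n - c$ as soon as $d_n > c$. Setting $t(M) := \sup\{i : \opn{Ext}^i_A(S, M) \ne 0\}$, the shift identity $\opn{Ext}^i_A(R_1, M) = \opn{Ext}^{i-j_1}_A(S, M)$ gives $t(M) = t_1(M) - j_1$, while the hypercohomology spectral sequence $E_2^{p,q} = \opn{Ext}^p_A(\mrm{H}^{-q}(S), M) \Rightarrow \opn{Ext}^{p+q}_A(S, M)$ yields $t(M) \le d - s_0$, where $s_0 := \inf(S)$. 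Hence the integers $c'_n := d_n - t(M_n^{(0)})$ lie in the bounded interval $[s_0,\, c + j_1]$; by pigeonhole followed by monotone subsequencing, pass to a sub-subsequence on which $c'_n$ equals a constant $c'$ and $d_n$ is strictly increasing.

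Set $j := c'$, $R := S[j]$, and let $M_n$ be the $c'$-th cosyzygy of $M_n^{(0)}$ in a minimal injective $A$-resolution, so that $a_n := \injdim_A(M_n) = d_n - c' = t(M_n^{(0)})$ is an increasing sequence tending to $+\infty$. Iterating the long exact sequences for $\opn{Ext}_A(S,-)$ arising from the defining short exact sequences $0 \to N^k \to I^k \to N^{k+1} \to 0$ of the cosyzygies, and using the spectral-sequence vanishing $\opn{Ext}^i_A(S, I) = 0$ for every injective $A$-module $I$ and every $i \notin [-s_1, -s_0]$, yields for all sufficiently large $i$ the isomorphism $\opn{Ext}^{i-c'}_A(S, M_n) \cong \opn{Ext}^i_A(S, M_n^{(0)})$. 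Taking $i = a_n$, the right-hand side equals $\opn{Ext}^{t(M_n^{(0)})}_A(S, M_n^{(0)}) \ne 0$ by definition of $t$, and since $\opn{Ext}^{a_n}_A(R, M_n) = \opn{Ext}^{a_n - j}_A(S, M_n) = \opn{Ext}^{a_n - c'}_A(S, M_n)$, this furnishes the required non-vanishing Ext witness. The main obstacle is the precise verification of $\injdim_B(\tau_*M) = \injdim_A(M)$ at the DG-ring level with the correct DG-normalisation of $\opn{FID}(B)$; once this bridge is established, the remainder is a clean combination of spectral-sequence bookkeeping, pigeonhole, and cosyzygy iteration.
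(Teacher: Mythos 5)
Your overall strategy is the same as the paper's: pass to the trivial extension $B = A\skewtimes S[j_1]$ with $\sup(S[j_1])<0$, invoke \cref{cor:InfFID} to get $\opn{FID}(B)<\infty$, use the adjunction $\RHom_A(B,-)$ together with the retraction $\pi:B\to\mrm{H}^0(B)=A$ to transport injective dimensions, and then run a boundedness-plus-pigeonhole argument. The key identity $\injdim_B(\tau_*M)=\injdim_A(M)$, which you flag as the main obstacle, is exactly what the paper obtains from the reduction formula $\injdim_B(N) = \injdim_{\mrm{H}^0(B)}\bigl(\RHom_B(\mrm{H}^0(B),N)\bigr)$ of \cite[Theorem~2.5]{ShINJ} combined with the adjunction $\RHom_B(A,\RHom_A(B,M))\cong M$; citing that theorem closes the bridge cleanly.

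Where you diverge is the final step, and here there is a genuine gap. You set $j:=c'$ and replace $M_n^{(0)}$ by its $c'$-th injective cosyzygy. But from your own bound you only know $c'\in[\inf(S),\,c+j_1]$, and $\inf(S)$ can be negative (nothing in the statement normalizes $S$). When $c'<0$ there is no ``$c'$-th cosyzygy in an injective resolution,'' so the construction is undefined precisely in the case that can occur when $\mrm{H}^{\inf(S)}(S)$ detects high Ext against $M_n^{(0)}$.

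The detour is also unnecessary. After the pigeonhole you already know $t(M_n^{(0)})=d_n-c'$ on the subsequence, hence $\opn{Ext}^{\,d_n-c'}_A(S,M_n^{(0)})\neq 0$ by definition of $t$. Your shift identity $\opn{Ext}^i_A(S[j],M)=\opn{Ext}^{\,i-j}_A(S,M)$ then gives, with $j:=c'$, $a_n:=d_n$, and $M_n:=M_n^{(0)}$,
\[
\opn{Ext}^{a_n}_A(S[j],M_n) \;=\; \opn{Ext}^{\,d_n-c'}_A(S,M_n^{(0)}) \;\neq\; 0,
\]
which is exactly the required non-vanishing, with no cosyzygies and no restriction on the sign of $c'$. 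This is, up to bookkeeping and a sign convention in the shift, what the paper does: it reads the non-vanishing straight off the fact that $\sup\bigl(\RHom_A(S[j],M_n)\bigr)$ is \emph{achieved} at $a_n$. Dropping the cosyzygy and long-exact-sequence iteration makes your argument both shorter and correct in all cases.
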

\begin{proof}
The condition that $\opn{FID}(A) = +\infty$ implies the existence of an increasing sequence of natural numbers $b_n$,
with $\lim_{n\to +\infty} b_n = +\infty$,
and for each $n$,
a left $A$-module $M_n$,
with $\injdim_{A}(M_n) = b_n$.
By shifting if necessary, 
we may assume that $\sup(S) < 0$.
Let  $B = A\skewtimes S$ be the trivial extension DG-ring of $A$ by $S$.
According to \cref{cor:InfFID}, 
it holds that $K = \opn{FID}(B) < \infty$.
For each $n$, let $N_n = \mrm{R}\opn{Hom}_A(B,M_n) \in \cat{D}(B)$.
The fact that the composition $A \to B \to \mrm{H}^0(B) = A$ is equal to the identity implies by adjunction that
\[
\mrm{R}\opn{Hom}_B(\mrm{H}^0(B),N_n) = \mrm{R}\opn{Hom}_B(A,N_n) \cong M_n.
\]
According to \cite[Theorem 2.5]{ShINJ},
there is an equality 
\[
\injdim_B(N_n) = \injdim_{\mrm{H}^0(B)}\left(\mrm{R}\opn{Hom}_B(\mrm{H}^0(B),N_n)\right) = \injdim_A(M_n) = b_n < \infty.
\]
Hence, by the definition of the injective finitistic dimension,
we have an inequality
\[
K = \opn{FID}(B) \ge \injdim_B(N_n) - \sup(N_n) = b_n - \sup(N_n).
\]
The definition of injective dimension over $A$ implies that
\[
\sup(N_n) = \sup\left(\mrm{R}\opn{Hom}_A(B,M_n)\right) \le b_n - \inf(B) = b_n - \inf(S).
\]
Combining the above, 
we see that for each $n$ there are inequalities
\[
-K \le \sup(N_n) - b_n \le -\inf(S).
\]
As $K,\inf(S)$ are fixed constants,
this implies the existence of some $j \in \mathbb{Z}$,
such that $\sup(N_n) - b_n = j$ for infinitely many $n$.
Focusing on these $n$, we obtain an increasing sequence $a_n$ of natural numbers, a subsequence of $b_n$,
such that $\injdim_A(M_n) = a_n$, 
and such that $\sup(N_n) = a_n + j$.
The fact that over $A$ it holds that $B \cong A\oplus S$
implies that
\[
\sup(N_n) = \sup\left(\mrm{R}\opn{Hom}_A(B,M_n)\right) = 
\sup\left(\mrm{R}\opn{Hom}_A(A\oplus S,M_n)\right) = 
\sup\left(\mrm{R}\opn{Hom}_A(S,M_n)\right).
\]
As $\mrm{R}\opn{Hom}_A(-,M_n)$ is a contravariant triangulated functor,
this implies that
\[
\sup\left(\mrm{R}\opn{Hom}_A(S[j],M_n)\right) = 
\sup\left(\mrm{R}\opn{Hom}_A(S,M_n)\right) - j =
\sup(N_n) - j = a_n,
\]
which shows that
\[
0 \ne \mrm{H}^{a_n}\left(\mrm{R}\opn{Hom}_A(S[j],M_n)\right) = \opn{Ext}^{a_n}_A(S[j],M_n) 
\]
for all $n$, as claimed.
\end{proof}

We now use this non-vanishing result to prove the main result of this section which relates acyclic complexes of injectives to the finitistic dimension.

\begin{thm}\label{thm:FPD}
Let $A$ be a left and right noetherian ring with a dualizing complex.
Assume that every bounded above complex of injective left $A$-modules which is acyclic is null-homotopic.
Then $\opn{FID}(A) < \infty$.
\end{thm}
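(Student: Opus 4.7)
The plan is to prove the contrapositive: assuming $\opn{FID}(A) = +\infty$, I will construct a bounded above cochain complex of injective $A$-modules that is acyclic but not null-homotopic, contradicting the hypothesis. The starting point is \cref{thm:seperation}, which provides an integer $j \in \mathbb{Z}$, a strictly increasing sequence $a_n \to +\infty$, and left $A$-modules $M_n$ with $\injdim_A(M_n) = a_n$ and $\opn{Ext}^{a_n}_A(R, M_n) \ne 0$ for $R = S[j]$ a shift of a dualizing complex $S$. Let $I_n$ denote a minimal injective resolution of $M_n$, a bounded complex of injectives concentrated in degrees $[0, a_n]$; since $R$ and each $I_n$ are bounded complexes of injectives, they are K-injective, so the Ext non-vanishing produces chain maps $\phi_n \colon R \to I_n[a_n]$ that are not null-homotopic.

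The next step is to assemble the $\phi_n$ into a single bounded above complex of injectives. A natural candidate container is the product $F := \prod_n I_n[a_n]$: it is bounded above by degree $0$, degreewise a product of injectives (which is injective since $A$ is noetherian), and K-injective as a product of K-injective complexes. The coordinate map $\Phi := (\phi_n)_n \colon R \to F$ is not null-homotopic, because $\opn{Hom}_{\cat{K}(A)}(R, F) = \prod_n \opn{Hom}_{\cat{K}(A)}(R, I_n[a_n])$ and every coordinate is nonzero. However, $F$ is not acyclic, since $\mrm{H}^{-a_n}(F) \cong M_n$ at each degree $-a_n$, so neither $F$ itself nor the mapping cone of $\Phi$ serves directly as the desired complex.

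To convert $\Phi$ into genuinely acyclic data, I would build an iterative Postnikov-type tower that successively resolves each residual $M_n$ by coning in suitably shifted further injective resolutions, ensuring that in each fixed cohomological degree only finitely many nontrivial contributions arise (so degreewise the modules remain injective over the noetherian ring $A$) and that the resulting total complex is acyclic. An alternative route is to transfer the entire construction across the covariant Grothendieck duality equivalence $q \circ \opn{Hom}_A(R,-) \colon \cat{K}(\opn{Inj}(A)) \xrightarrow{\simeq} \cat{K}(\opn{Proj}(A))$, and instead build the analogous counterexample in $\cat{K}^{-}(\opn{Proj}(A))$. This may be more tractable, since $q \circ \opn{Hom}_A(R, I_n[a_n])$ is a bounded complex of projectives with concrete structure coming from duality, and the $\phi_n$ transport to explicit non-null-homotopic maps of projective complexes.

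The main obstacle is carrying out the assembly so that acyclicity and non-null-homotopy are achieved simultaneously: one must kill the $M_n$ cohomology in degree $-a_n$ without inadvertently killing the obstruction coming from $\Phi$. The infinitude of $a_n \to +\infty$ is essential here, pushing the residual cohomologies to $-\infty$, which both forces the construction to be properly unbounded below and provides the room to iteratively cancel all cohomology while preserving the nontrivial class furnished by the separation theorem.
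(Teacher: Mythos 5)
Your setup matches the paper exactly through the invocation of \cref{thm:seperation}, and you correctly identify that the product $F = \prod_n I_n[a_n]$ is a bounded above complex of injectives, is $K$-injective, and is \emph{not} acyclic because $\mrm{H}^{-a_n}(F) \cong M_n$. But at the decisive moment the proposal diverges into two sketched alternatives (a Postnikov-type tower, or a transfer across the covariant Grothendieck duality) that are not carried out, and neither is needed. The idea you are missing is not to map from $R$ into $F$ at all, but to take the \emph{natural map from the coproduct to the product},
\[
\varphi : \bigoplus_{n=1}^{\infty} I_n[a_n] \longrightarrow \prod_{n=1}^{\infty} I_n[a_n],
\]
and to use its mapping cone. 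Because the $a_n$ are strictly increasing, each fixed cohomological degree $k$ satisfies $k = -a_n$ for at most one $n$, so $\varphi$ induces an isomorphism on every cohomology module and is a quasi-isomorphism; its cone $D$ is therefore an acyclic bounded above complex of injectives (Bass--Papp keeps the product degreewise injective). Acyclicity comes for free from this device, so the ``main obstacle'' you identify --- killing the residual $M_n$ cohomology without destroying the obstruction --- never arises.

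To see that $\varphi$ is not a homotopy equivalence (equivalently, that $D$ is not null-homotopic), the paper replaces $\opn{Rest}_A(R)$ by a quasi-isomorphic bounded complex $S$ of finitely generated modules and applies the additive functor $\opn{Hom}_A(S,-)$, which commutes with both $\bigoplus$ and $\prod$. In degree $0$ cohomology this turns $\varphi$ into the canonical map
\[
\bigoplus_{n=1}^{\infty} \opn{Ext}^{a_n}_A(R,M_n) \longrightarrow \prod_{n=1}^{\infty} \opn{Ext}^{a_n}_A(R,M_n),
\]
which is not an isomorphism precisely because infinitely many of the $\opn{Ext}^{a_n}_A(R,M_n)$ are nonzero --- this is exactly what the nonvanishing clause of \cref{thm:seperation} was engineered to deliver. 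Your chain maps $\phi_n : R \to I_n[a_n]$ encode the same nonvanishing, but packaging them as a single map $\Phi : R \to F$ leads into a dead end, as you yourself observe. So the gap is concrete: you have all the ingredients but not the assembly, and without the coproduct-to-product comparison map the proposal does not constitute a proof.
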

\begin{proof}
Suppose that the conclusion is false,
so that $\opn{FID}(A) = +\infty$.
Then \cref{thm:seperation} holds,
which implies the existence of $a_n, M_n$ and $R$ as in the conclusion of that result.
Note that $R$, 
being a shift of a dualizing complex is also a dualizing complex.
For each $n$,
let $I_n$ be an injective resolution of $M_n$ of length $a_n$.
Consider the natural map
\[
\varphi: \bigoplus_{n=1}^{\infty} I_n[a_n] \to \prod_{n=1}^{\infty} I_n[a_n]
\]
Since $\mrm{H}^{-a_n}(I_n[a_n]) = M_n$,
and $\mrm{H}^i(I_n[a_n]) = 0$ for all $i\ne -a_n$,
it follows that $\varphi$ is a quasi-isomorphism.

Next, we claim that $\varphi$ is not a homotopy equivalence.
To see this, 
recall that the restriction of $R$ to $\cat{D}(A)$, 
the complex of left $A$-modules $\opn{Rest}_A(R)$ has bounded and finitely generated cohomology.
Hence, by \cite[Proposition 7.4.9]{YeBook},
there is an isomorphism $\opn{Rest}_A(R) \cong S$ in $\cat{D}(A)$, 
where $S$ is a bounded complex of finitely generated $A$-modules. 
This in turn implies that the functor $\opn{Hom}_A(S,-)$ commutes with both products and coproducts.
Applying this functor to $\varphi$,
and using the fact that 
\[
\mrm{H}^0\left(\opn{Hom}_A(S,I_n[a_n])\right) = \opn{Ext}^{a_n}_A(S,M_n) \cong \opn{Ext}^{a_n}_A(R,M_n),
\]
we deduce that $\mrm{H}^0\left(\opn{Hom}_A(S,\varphi)\right)$ is isomorphic to the natural map
\[
\bigoplus_{n=1}^{\infty}\opn{Ext}^{a_n}_A(R,M_n) \to \prod_{n=1}^{\infty} \opn{Ext}^{a_n}_A(R,M_n).
\]
Since for all $n$, 
it holds that $\opn{Ext}^{a_n}_A(R,M_n) \ne 0$,
we deduce that the map $\mrm{H}^0\left(\opn{Hom}_A(S,\varphi)\right)$ is not an isomorphism.
As $\opn{Hom}_A(S,-)$ is an additive functor,
it preserves homotopy equivalences,
which shows that $\varphi$ is not a homotopy equivalence.

The Bass-Papp theorem implies that $\varphi$ is a map between bounded above complexes of injective $A$-modules.
This shows that its cone $D = \opn{cone}(\varphi)$ is also bounded above complex of injective $A$-modules.
Since $\varphi$ is a quasi-isomorphism,
it follows that $D$ is acyclic,
and since $\varphi$ is not homotopy equivalence,
it follows that $D$ is not null-homotopic,
contradicting our assumption on $A$.
\end{proof}

Combining \cref{thm:acyclic} and \cref{thm:FPD}, 
we have the following result:
\begin{cor}
Let $A$ be a left and right noetherian ring with a dualizing complex.
If $\opn{Coloc}(\opn{Proj}(A)) = \cat{D}(A)$,
then $\opn{FID}(A) < \infty$.
\end{cor}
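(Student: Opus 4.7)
The statement is essentially a direct combination of \cref{thm:acyclic} and \cref{thm:FPD}, both of which are already established earlier in the excerpt, so the plan is to simply chain the two implications together. Concretely, I would first invoke \cref{thm:acyclic}: since $A$ is left and right noetherian with a dualizing complex and since we are assuming $\opn{Coloc}(\opn{Proj}(A)) = \cat{D}(A)$, the hypotheses of \cref{thm:acyclic} are satisfied, and we conclude that every bounded above cochain complex of injective $A$-modules which is acyclic is null-homotopic.

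Having obtained this acyclicity/null-homotopy statement, I would then feed it into \cref{thm:FPD}: the ring $A$ is still left and right noetherian with a dualizing complex, and the conclusion of the previous step is precisely the hypothesis of \cref{thm:FPD}. Therefore we deduce $\opn{FID}(A) < \infty$, which is the desired conclusion.

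There is no real obstacle here; the proof is genuinely just a one-line composition of the two main theorems of the preceding sections. The only thing worth noting for the reader is that the role of the dualizing complex is used twice, once in each of the two input theorems, and that noetherianity on both sides is needed for \cref{thm:acyclic} to apply. I would present the proof as a single sentence or at most two sentences, explicitly citing both theorems, and perhaps remark that, combined with the appendix of Nakamura and Thompson, this recovers and generalizes \cite[Proposition 5.2]{Rickard} to the noetherian setting with a dualizing complex.
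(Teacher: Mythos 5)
Your proof is correct and is exactly the paper's argument: the paper introduces this corollary with the words ``Combining \cref{thm:acyclic} and \cref{thm:FPD}, we have the following result,'' which is precisely the two-step composition you describe.
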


\begin{rem}
In \cite[Theorem 4.3]{Rickard},
Rickard showed that if $A$ is a finite dimensional algebra over a field and $\opn{Loc}(\opn{Inj}(A)) = \cat{D}(A)$ then $\opn{FPD}(A) < \infty$.
That result was generalized by us in \cite[Corollary 5.3]{ShFinDim}, where the same result was shown to hold for every left and right noetherian ring with a dualizing complex.
It should be noted that every finite dimensional algebra over a field has a dualizing complex.
Similarly, Rickard showed in \cite[Proposition 5.2]{Rickard} that if $A$ is a finite dimensional algebra over a field and $\opn{Coloc}(\opn{Proj}(A)) = \cat{D}(A)$,
then $\opn{FPD}(A^{\op}) < \infty$,
and so the above result completes the picture and generalizes to the same setting Rickard's dual result.
Some ideas of Rickard's proof of \cite[Proposition 5.2]{Rickard} are contained in the proofs of \cref{thm:acyclic} and \cref{thm:FPD},
and one should view his result as a predecessor of both of these theorems.
\end{rem}

\section{Equivalent conditions for finite finitistic dimension}

In this section we prove several equivalent conditions for a ring to have finite finitistic dimensions. We start with the main result of this paper, \cref{cthm:main} from the introduction.
Let us recall its statement:

\begin{thm}\label{thm:main}
Let $A$ be any ring, and consider the following statements that $A$ might satisfy:
\begin{enumerate}
\item The injective left $A$-modules generate $\cat{D}(A)$, that is $\opn{Loc}(\opn{Inj}(A)) = \cat{D}(A)$.
\item The flat right $A$-modules cogenerate $\cat{D}(A^{\op})$, that is $\opn{Coloc}(\opn{Flat}(A^{\op})) = \cat{D}(A^{\op})$.
\item The projective right $A$-modules cogenerate $\cat{D}(A^{\op})$, $\opn{Coloc}(\opn{Proj}(A^{\op})) = \cat{D}(A^{\op})$.
\item Every bounded above complex of injective right $A$-modules which is acyclic is null-homotopic, that is, $\cat{K}^{-}_{\mrm{ac}}(\opn{Inj}(A^{\op})) := \{M \in \cat{K}^{-}(\opn{Inj}(A^{\op})) \mid \forall n, \mrm{H}^n(M) = 0\} = 0$.
\item Every bounded below complex of projective left $A$-modules which is acyclic is null-homotopic, that is, $\cat{K}^{+}_{\mrm{ac}}(\opn{Proj}(A)) := \{ M \in \cat{K}^{+}(\opn{Proj}(A)) \mid \forall n, \mrm{H}^n(M) = 0\} = 0$.
\item It holds that $\opn{FPD}(A) < \infty$.
\item It holds that $\opn{FFD}(A) < \infty$.
\item It holds that $\opn{FID}(A^{\op}) < \infty$.
\end{enumerate}
Then the following holds:
\begin{enumerate}
\item[a)] It always holds that  (8) $\implies$ (4), that (7) $\impliedby$ (6) $\implies$ (5) $\impliedby$ (7) and that (3) $\implies$ (2).
\item[b)] If moreover $A$ is left coherent then (1) $\implies$ (2),
and if $A$ is left noetherian then (7) $\iff$ (8).
\item[c)] If $A$ is left and right noetherian and has a dualizing complex, 
then 
\[
(1)\implies (2) \iff (3) \implies (4)  \iff (5)  \iff  (6) \iff (7) \iff (8)
\]
\item[d)] If $A$ does not have a dualizing complex, even if it is commutative and noetherian,
it could happen that (1), (2), (4) and (5) hold, but (6), (7) and (8) do not hold.
\end{enumerate}
\end{thm}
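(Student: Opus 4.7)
The plan is to assemble the claim from the previously established results (Theorems \ref{thm:acyclic} and \ref{thm:FPD}, Propositions \ref{prop:ffdFID} and \ref{prop:FFDifFPD}, Remark \ref{rem:flat}, Theorem \ref{thm:injImplyFlat}) together with the Nakamura--Thompson appendix, arranging them into the implications claimed by parts (a)--(d).

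For part (a) I would verify each arrow separately. The implications (8)$\implies$(4) and (6)$\implies$(5) are immediate from the appendix (the converse of \cref{cthmB} and its dual for FPD, which hold for arbitrary rings). The inclusion $\opn{Proj}(A^{\op}) \subseteq \opn{Flat}(A^{\op})$ together with monotonicity of $\opn{Coloc}$ gives (3)$\implies$(2). For (6)$\implies$(7), I would use the inequality $\flatdim_A(M) \le \projdim_A(M)$ for modules of finite projective dimension, combined with an argument (again drawn from the appendix, or a direct extraction of projective-dimension bounds from flat-dimension data) bounding $\opn{FFD}$ in terms of $\opn{FPD}$.

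For part (b), (1)$\implies$(2) is precisely \cref{thm:injImplyFlat}, and (7)$\iff$(8) is \cref{prop:ffdFID}, that is, the Matlis-type equality $\opn{FFD}(A)=\opn{FID}(A^{\op})$ for left noetherian $A$.

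For part (c), I would chain the implications around the cycle: (1)$\implies$(2) by part (b); (2)$\iff$(3) by \cref{rem:flat}, which invokes Jorgensen's theorem that flat modules have finite projective dimension over such rings; (3)$\implies$(4) by applying \cref{thm:acyclic} to $A^{\op}$, which inherits a dualizing complex from $A$; (4)$\implies$(8) by applying \cref{thm:FPD} to $A^{\op}$; (8)$\iff$(7) by part (b); (7)$\iff$(6) by combining Jorgensen's theorem with \cref{prop:FFDifFPD} in both directions; and (6)$\iff$(5) with one direction from (a) and the other from the dual statement supplied by the appendix. The main obstacle is the closing arrow (5)$\implies$(6): without the appendix one would need an independent construction, in the spirit of \cref{thm:seperation}, of a non-null-homotopic bounded-below acyclic projective complex witnessing $\opn{FPD}(A)=\infty$.

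For part (d), I would exhibit an explicit commutative noetherian ring without a dualizing complex for which (1), (2), (4), (5) hold but (6), (7), (8) fail. A natural candidate is a commutative noetherian ring of infinite Krull dimension, such as Nagata's classical example, which automatically has $\opn{FPD}(A)=\opn{FID}(A)=\opn{FFD}(A)=\infty$ while still satisfying $\opn{Loc}(\opn{Inj}(A))=\cat{D}(A)$ by Neeman's work. The principal technical step is verifying that the homotopy-theoretic conditions (4) and (5) still hold in the absence of a dualizing complex; I would address this by invoking Neeman's classification of (co)localizing subcategories over commutative noetherian rings together with the adjunction between $\cat{K}(\opn{Proj}(A))$ and $\cat{K}(\opn{Flat}(A))$, analyzing stability of acyclic complexes in this setting.
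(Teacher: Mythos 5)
Your outline reproduces the paper's high-level strategy of assembling the theorem from previously established pieces, and most of the arrows are matched to the right sources. However, there are a few genuine gaps.

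First, for $(6)\implies(7)$ in part (a), the sketch of using $\flatdim_A(M)\le\projdim_A(M)$ ``combined with an argument bounding $\opn{FFD}$ in terms of $\opn{FPD}$'' does not actually work as described: a module of finite flat dimension need not have finite projective dimension, so the inequality by itself gives no handle on $\opn{FFD}$. The correct input here is Jensen's classical theorem, which the paper cites directly; without naming that result the step is incomplete. You also never address $(7)\implies(5)$, which is part of the claim in (a) and follows from \cref{thm_proj} (the appendix result applies to either hypothesis $\opn{FPD}(A)<\infty$ or $\opn{FFD}(A)<\infty$).

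Second, your treatment of $(5)\implies(6)$ in part (c) is internally inconsistent: you first assert it comes from ``the dual statement supplied by the appendix'' and then call it ``the main obstacle'' requiring an independent construction. The appendix does not supply this implication. In the paper it is extracted from the proof of \cite[Theorem 5.1]{ShFinDim}, which (under the dualizing-complex hypothesis) constructs a non-null-homotopic acyclic bounded below complex of projectives from $\opn{FPD}(A)=\infty$. Your instinct that the argument must be ``in the spirit of \cref{thm:seperation}'' is correct, but you have not identified a source or supplied the construction, so this arrow remains unproven.

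Third, in part (d) you identify the right class of counterexamples (commutative noetherian rings of infinite Krull dimension) and the right source for (1), but for verifying conditions (4) and (5) you propose analyzing acyclic complexes via Neeman's classification of colocalizing subcategories and the adjunction $\cat{K}(\opn{Proj}(A))\rightleftarrows\cat{K}(\opn{Flat}(A))$. It is not clear this route succeeds. The paper instead relies on the unconditional appendix results \cref{cor:acyc-inj-general} and \cref{cor:comm-proj}, which prove (4) and (5) for every commutative noetherian ring by localizing at primes and invoking the locally finite finitistic dimension. This is a cleaner and provably correct mechanism, and the proposal is missing it.

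The remaining pieces --- $(8)\implies(4)$ from \cref{FID_fin_nh}, $(3)\implies(2)$ by $\opn{Proj}\subseteq\opn{Flat}$, $(1)\implies(2)$ via \cref{thm:injImplyFlat}, $(7)\iff(8)$ via Matlis, $(2)\iff(3)$ via Jorgensen and \cref{rem:flat}, $(3)\implies(4)$ via \cref{thm:acyclic} applied to $A^{\op}$, and $(4)\implies(8)$ via \cref{thm:FPD} applied to $A^{\op}$ --- are correctly identified and match the paper.
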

\begin{proof}
\begin{enumerate}[wide, labelwidth=!, labelindent=0pt]
\item[a)]The fact that (8)$\implies$(4) is \cref{FID_fin_nh}. That (6)$\implies$ (7) is a classical result of Jensen which 
follows from \cite[Proposition 6]{Jensen}. That (6) $\implies (5)$ and that (7) $\implies$ (5) is \cref{thm_proj}, and the fact that (3)$\implies$ (2) is trivial because any projective module is flat. 
\item[b)] That (1)$\implies(2)$ is \cref{thm:injImplyFlat}, and if $A$ is left noetherian then (7) $\iff$ (8) is a classical theorem of Matlis shown in \cite[Theorem 1]{Mat59}.
\item[c)] Assume now that $A$ is left and right noetherian and has a dualizing complex.
It follows from \cite[Theorem]{Jor05} that (2)$\iff$(3), and that (6) $\iff$ (7).
According to \cref{thm:FPD} it holds that (4)$\implies$(8),
and by \cref{thm:acyclic} we have that (3) $\implies$ (4).
The proof of \cite[Theorem 5.1]{ShFinDim} shows that (5)$\implies$ (6).
More precisely, on the first part of the proof,
assuming $\opn{FPD}(A) = +\infty$, 
we construct a bounded below complex of projective left $A$-modules which is acyclic but not null-homotopic.
We have thus shown that 
\[
(1) \implies (2) \iff (3) \implies (4) \iff (8) \iff (7) \iff (6) \iff (5),
\]
which completes the proof.
\item[d)] According to \cite[Theorem 3.3]{Rickard},
if $A$ is any commutative noetherian ring, 
then it satisfies (1), and hence also (2).
By \cref{cor:acyc-inj-general} and \cref{cor:comm-proj}, it also satisfies (4) and (5).
But if $A$ has infinite Krull dimension,
then all its finitistic dimensions are infinite, 
so that (6), (7) and (8) fail.
\end{enumerate}
\end{proof}

For a ring $A$, let us denote by $\cat{C}(\opn{Mod}(A))$ the category of all cochain complexes of left $A$-modules, and by
$\cat{C}^{+}(\opn{Proj}(A))$ its full subcategory consisting of 
cochain complexes $M$ such that $M^n$ is a projective $A$-module for all $n$, and $M^n = 0$ for all $n\ll 0$.
Using the above and the covariant Grothendieck duality,
were obtain the following necessary and sufficient condition for such a ring to have $\opn{FPD}(A) < \infty$:

\begin{thm}\label{thm:exact-criteria}
Let $A$ be a left and right noetherian ring with a dualizing complex $R$.
Then the following are equivalent:
\begin{enumerate}
\item It holds that $\opn{FPD}(A) < \infty$.
\item The functor $R\otimes_A -: \cat{C}^{+}(\opn{Proj}(A)) \to \cat{C}(\opn{Mod}(A))$ is exact; that is, it sends quasi-isomorphisms between objects of $\cat{C}^{+}(\opn{Proj}(A))$ to quasi-isomorphisms.
\end{enumerate}
\end{thm}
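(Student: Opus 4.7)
The plan is to route through condition (5) of \cref{thm:main}: since $A$ is left and right noetherian with a dualizing complex, \cref{thm:main}(c) tells us that $\opn{FPD}(A) < \infty$ if and only if every bounded below acyclic complex of projective left $A$-modules is null-homotopic. The covariant Grothendieck duality equivalence $R\otimes_A - : \cat{K}(\opn{Proj}(A)) \iso \cat{K}(\opn{Inj}(A))$ will translate the statement about $R \otimes_A -$ being exact on $\cat{C}^{+}(\opn{Proj}(A))$ into a statement about null-homotopy of bounded below acyclic complexes of projectives, which is exactly condition (5).

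For the direction (1)$\implies$(2), I would assume $\opn{FPD}(A) < \infty$, so condition (5) of \cref{thm:main} holds. Given a quasi-isomorphism $\varphi : P \to P'$ between objects of $\cat{C}^{+}(\opn{Proj}(A))$, the cone $\opn{cone}(\varphi)$ lies in $\cat{K}^{+}(\opn{Proj}(A))$ and is acyclic, hence by (5) it is null-homotopic. This means $\varphi$ is a homotopy equivalence, and since $R\otimes_A -$ is an additive functor of cochain complexes, $R\otimes_A \varphi$ is also a homotopy equivalence, in particular a quasi-isomorphism.

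For (2)$\implies$(1), take $M \in \cat{K}^{+}_{\mrm{ac}}(\opn{Proj}(A))$; I want to show $M \cong 0$ in $\cat{K}(\opn{Proj}(A))$, which by \cref{thm:main} will yield $\opn{FPD}(A) < \infty$. Since $M$ is acyclic, the zero map $0 \to M$ is a quasi-isomorphism inside $\cat{C}^{+}(\opn{Proj}(A))$, so by hypothesis $R\otimes_A M$ is acyclic. Because $R$ is bounded and each $R^i$ is injective as a left $A$-module, and because each $M^j$ is a direct summand of a free left $A$-module, the Bass--Papp theorem gives that every component $(R\otimes_A M)^n$ is an injective left $A$-module; moreover $R\otimes_A M$ is bounded below since both $R$ and $M$ are. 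Thus $R\otimes_A M$ is a bounded below acyclic complex of injectives, and by the classical fact recalled in the introduction it is null-homotopic, i.e.\ zero in $\cat{K}(\opn{Inj}(A))$. Since $R\otimes_A - : \cat{K}(\opn{Proj}(A)) \to \cat{K}(\opn{Inj}(A))$ is an equivalence by \cite[Theorem 4.8]{InKr}, we conclude $M \cong 0$ in $\cat{K}(\opn{Proj}(A))$, as desired.

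The main conceptual content is already packaged in \cref{thm:main} and the Grothendieck duality equivalence; the only potential obstacle is the verification that $R\otimes_A M$ has injective components, which reduces to Bass--Papp once one observes that projective modules are summands of free modules and the free case is just a coproduct of injectives. After that, the proof is essentially a clean translation of condition (5) across the duality.
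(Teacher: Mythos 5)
Your proof is correct and follows essentially the same route as the paper: both directions are translated through condition (5) of \cref{thm:main} and the covariant Grothendieck duality equivalence $R\otimes_A -:\cat{K}(\opn{Proj}(A))\to\cat{K}(\opn{Inj}(A))$, with the same use of the Bass--Papp theorem (which the paper uses implicitly here and explicitly in the proof of \cref{thm:acyclic}) to see that $R\otimes_A M$ is a bounded below complex of injectives.
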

\begin{proof}
 Since the category $\cat{C}^{+}(\opn{Proj}(A))$ is closed under taking cones,
 clearly the second condition is equivalent to the fact that $R\otimes_A -: \cat{C}^{+}(\opn{Proj}(A)) \to \cat{C}(\opn{Mod}(A))$ sends acyclic complexes to acyclic complexes.
 If $\opn{FPD}(A) < \infty$,
 then by \cref{thm:main},
 every acyclic complex in $\cat{C}^{+}(\opn{Proj}(A))$ is null-homotopic, 
 and since $R\otimes_A -$ is an additive functor, 
 it sends null-homotopic complexes to null-homotopic complexes,
 proving the claim.
 Conversely, suppose that the functor $R\otimes_A -$ is exact,
 and let $M \in \cat{C}^{+}(\opn{Proj}(A))$ be an acyclic bounded below complex of projective $A$-modules.
 By assumption, $R\otimes_A M$ is also acyclic,
 but by definition it is a bounded below complex of injective $A$-modules, so the fact that it is acyclic implies that it is null-homotopic.
 According to \cite[Theorem 4.8]{InKr},
 the functor $R\otimes_A -:\cat{K}(\opn{Proj}(A)) \to \cat{K}(\opn{Inj}(A))$ is an equivalence of categories,
 and we have seen that applying it to $M$ yields $0$ in $\cat{K}(\opn{Inj}(A))$,
 which implies that $M \cong 0$ in $\cat{K}(\opn{Proj}(A))$,
 so that $M$ is null-homotopic. 
 We have thus proven that any bounded below cochain complex of projective $A$-modules which is acyclic is null-homotopic, 
 so by \cref{thm:main}, 
 we deduce that $\opn{FPD}(A) < \infty$.
\end{proof}

We now specialize to finite dimensional algebras over a field.
In this setting, Cummings has recently shown in \cite{Cummings} that if finiteness of the finitistic dimension is left-right symmetric, then any finite dimensional algebra has finite finitistic dimension. 
Combining this with \cref{thm:main}, 
we obtain the following:

\begin{cor}\label{cor:symmetry}
Let $\K$ be a field.
Then every finite dimensional algebra $A$ over $\K$ satisfies that $\opn{FPD}(A) < \infty$ if and only if for every finite dimensional algebra $A$ over $\K$ the following are equivalent:
\begin{enumerate}
\item Every bounded above complex of injective left $A$-modules which is acyclic is null-homotopic.
\item Every bounded below complex of projective left $A$-modules which is acyclic is null-homotopic.
\end{enumerate}    
\end{cor}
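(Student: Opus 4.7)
The plan is to use \cref{thm:main}(c) to translate statements (1) and (2) of the corollary into statements about finiteness of the projective finitistic dimension of $A$ and $A^{\op}$, and then to invoke Cummings' symmetry result from \cite{Cummings}. Recall that every finite-dimensional $\K$-algebra is left and right noetherian and possesses a dualizing complex, for instance the bimodule $\Hom_\K(A,\K)$, so \cref{thm:main}(c) applies both to $A$ and to $A^{\op}$.

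Under these identifications, condition (1) of the corollary is precisely condition (4) of \cref{thm:main} applied to the opposite algebra $A^{\op}$, and is therefore equivalent to $\opn{FPD}(A^{\op}) < \infty$. Condition (2) of the corollary is precisely condition (5) of \cref{thm:main} applied to $A$, and is therefore equivalent to $\opn{FPD}(A) < \infty$. So the equivalence (1)$\iff$(2) for a fixed finite-dimensional $\K$-algebra $A$ translates into the biconditional
\[
\opn{FPD}(A) < \infty \quad\Longleftrightarrow\quad \opn{FPD}(A^{\op}) < \infty.
\]

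For the forward direction, if every finite-dimensional $\K$-algebra has finite $\opn{FPD}$, then for each such $A$ both sides of the above biconditional hold, so both (1) and (2) hold unconditionally and are trivially equivalent. For the reverse direction, assuming (1)$\iff$(2) for every finite-dimensional $\K$-algebra yields exactly the left-right symmetry of finiteness of $\opn{FPD}$ in this class, and Cummings' theorem \cite{Cummings} then promotes this symmetry to the conclusion that $\opn{FPD}(A) < \infty$ for every finite-dimensional $\K$-algebra $A$. There is really no substantive obstacle here beyond the bookkeeping of matching conditions (1) and (2) of the corollary with conditions (4) for $A^{\op}$ and (5) for $A$ of \cref{thm:main}; the heavy lifting is already done by \cref{thm:main} and by Cummings' symmetry result.
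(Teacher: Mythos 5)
Your proof is correct and follows essentially the same route as the paper: apply \cref{thm:main}(c) to $A^{\op}$ to identify condition (1) of the corollary with $\opn{FPD}(A^{\op}) < \infty$, apply it to $A$ to identify condition (2) with $\opn{FPD}(A) < \infty$, and then invoke Cummings' symmetry theorem. The paper's proof is identical in substance, differing only in that you spell out the bookkeeping (noting that finite-dimensional $\K$-algebras are noetherian with dualizing complex $\opn{Hom}_\K(A,\K)$, and explicitly tracing how condition (4) of \cref{thm:main} for $A^{\op}$ becomes a statement about left $A$-modules) rather than leaving it implicit.
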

\begin{proof}
In view of \cref{thm:main},
for a finite dimensional algebra $A$
the fact that every bounded above complex of injective left $A$-modules which is acyclic is null-homotopic is equivalent to $\opn{FPD}(A^{\op}) < \infty$.
Similarly, the fact that every bounded below complex of projective left $A$-modules which is acyclic is null-homotopic is equivalent to $\opn{FPD}(A) < \infty$.
The result then follows from \cite[Theorem 3.4]{Cummings}.
\end{proof}

\begin{rem}
In \cite{Krause22},
Krause generalized the above result of Cummings,
constructing from any ring, 
an asymmetric cover which increases
the finitistic dimension on one side and decreases it to
zero on the other side.
It is however not clear to us if his construction,
when applied to a left and right noetherian ring with a dualizing complex, will result in a left and right noetherian ring with a dualizing complex, and hence it is not clear if one can generalize the above corollary to a larger class of rings.
\end{rem}

We finish this section by connecting finiteness of the small finitistic dimension with finiteness of the big finitistic dimension.
Recall that for a left coherent ring $A$,
we define the small projective finitistic dimension by
\[
\opn{fpd}(A) = \sup\{\projdim_A(M) \mid M \in \opn{Mod}_f(A), \projdim_A(M) < \infty\}
\]
where $\opn{Mod}_f(A)$ denotes the category of finitely presented $A$-modules.

Clearly, for any ring the fact that $\opn{FPD}(A) < \infty$ implies that $\opn{fpd}(A) < \infty$.
The next result says when, for a noetherian ring with a dualizing complex,
$\opn{fpd}(A) < \infty$ implies $\opn{FPD}(A) < \infty$.

\begin{thm}\label{thm:limit}
Let $A$ be a left and right noetherian ring with a dualizing complex, and suppose that $\opn{fpd}(A) < \infty$.
Then the following are equivalent:
\begin{enumerate}
\item It holds that $\opn{FPD}(A) < \infty$.
\item Every acyclic bounded below complex of projective $A$-modules is a filtered direct limit of acyclic bounded below complexes of finitely generated projective $A$-modules.
\end{enumerate}
\end{thm}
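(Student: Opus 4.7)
The plan is to invoke Theorem \ref{thm:main} c) to rephrase (1) as the statement that every acyclic bounded below complex of projective $A$-modules is null-homotopic, and then prove each direction by combining the splitting of such complexes with the Iyengar--Krause equivalence $R \otimes_A - : \cat{K}(\opn{Proj}(A)) \to \cat{K}(\opn{Inj}(A))$.

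For (1) $\implies$ (2), let $P^\bullet$ be any acyclic bounded below complex of projectives. By Theorem \ref{thm:main}, $P^\bullet$ is null-homotopic, so every cycle $Z^n := \ker d^n$ is projective and the short exact sequence $0 \to Z^n \to P^n \to Z^{n+1} \to 0$ splits. Choosing splittings yields an isomorphism $P^\bullet \cong \bigoplus_n D^n(Z^{n+1})$, where $D^n(M)$ denotes the two-term disk $(\cdots\to 0 \to M \xrightarrow{1} M \to 0 \to \cdots)$ in degrees $n$ and $n+1$. By Lazard's theorem, each projective module $Z^{n+1}$ is a filtered direct limit of finitely generated free modules $Z^{n+1} = \varinjlim_{\lambda \in \Lambda_n} Q_{n,\lambda}$; writing the outer direct sum itself as a filtered colimit over finite subsets of the indexing integers and combining both colimits gives
\[
P^\bullet \cong \varinjlim_{(F, \mu)} \bigoplus_{n \in F} D^n(Q_{n, \mu(n)}),
\]
where $(F, \mu)$ runs over finite sets $F$ of integers paired with choices $\mu(n) \in \Lambda_n$ for $n \in F$. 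Each term in this colimit is a finite direct sum of disks of finitely generated free modules, hence an acyclic bounded below complex of finitely generated projective $A$-modules.

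For (2) $\implies$ (1), assume (2) and let $P^\bullet$ be any acyclic bounded below complex of projectives; by Theorem \ref{thm:main} it suffices to show $P^\bullet$ is null-homotopic. Write $P^\bullet = \varinjlim_i P_i^\bullet$ as in (2). Each cycle $Z^n(P_i^\bullet)$ is finitely generated ($A$ is left noetherian) and has finite projective dimension (the portion of $P_i^\bullet$ strictly below degree $n$ furnishes a finite projective resolution), hence $\projdim_A Z^n(P_i^\bullet) \le d := \opn{fpd}(A)$. Dimension-shifting in the projective resolution of a later cycle $Z^{n+d}(P_i^\bullet)$ shows that its $d$-th syzygy, namely $Z^n(P_i^\bullet)$, is itself projective, so every $P_i^\bullet$ is null-homotopic. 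Applying the additive functor $R \otimes_A -$ termwise, each $R \otimes_A P_i^\bullet$ is contractible, and since filtered colimits of modules are exact, the colimit $R \otimes_A P^\bullet \cong \varinjlim_i R \otimes_A P_i^\bullet$ is acyclic. On the other hand, $R$ is a bounded complex whose entries are injective as left $A$-modules and $P^\bullet$ is a complex of projectives (summands of frees), so by the Bass--Papp theorem every term of $R \otimes_A P^\bullet$ is injective, and the complex is bounded below. Hence $R \otimes_A P^\bullet$ is an acyclic bounded below complex of injectives, which is automatically null-homotopic; the Iyengar--Krause equivalence then forces $P^\bullet \cong 0$ in $\cat{K}(\opn{Proj}(A))$, as desired.

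The main obstacle is precisely the acyclicity of the specific complex $R \otimes_A P^\bullet$. One might hope to identify it with $R \otimes^{\msf{L}}_A P^\bullet$, which is zero in $\cat{D}(A)$, but a bounded below complex of projectives need not be $\cat{K}$-projective — indeed an acyclic one is $\cat{K}$-projective only if already null-homotopic, which is precisely what we are trying to prove. This is why the presentation in (2) together with the hypothesis $\opn{fpd}(A) < \infty$ are both needed: they let us reduce to the sub-complexes $P_i^\bullet$, where contractibility of $R \otimes_A P_i^\bullet$ is automatic, and only then transport the conclusion to $P^\bullet$ via the exactness of filtered colimits.
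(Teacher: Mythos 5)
Your proof is correct, and direction (1) $\implies$ (2) is essentially the same argument the paper makes by citing Neeman's Lemma 8.5 (splitting a contractible complex of projectives into disks, then using that projectives are filtered colimits of finitely generated frees and direct sums are filtered colimits of finite sub-sums). The interesting divergence is in (2) $\implies$ (1). The paper's route is: each $P_i^\bullet$ is contractible by \cref{cor:fpd-small}, so $P^\bullet$ is a filtered colimit of contractible complexes of projectives, which is contractible by \cite[Theorem 8.6]{Nee08}. Your route replaces that last citation by a detour through the covariant Grothendieck duality: since $R\otimes_A -$ is additive it kills each contractible $P_i^\bullet$, it commutes degreewise with filtered colimits because $R$ is bounded, filtered colimits of modules are exact so $R\otimes_A P^\bullet$ is acyclic, and (by Bass--Papp and boundedness of $R$) it is a bounded below complex of injectives, hence null-homotopic by the elementary fact; the Iyengar--Krause equivalence $\cat{K}(\opn{Proj}(A)) \simeq \cat{K}(\opn{Inj}(A))$ reflects isomorphisms, so $P^\bullet \cong 0$ in $\cat{K}(\opn{Proj}(A))$. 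Both arguments are valid. What the paper's approach buys is brevity and independence from the dualizing complex at that particular step (Neeman's theorem holds over any ring, so the only place the dualizing complex enters the paper's proof of (2)~$\implies$~(1) is via \cref{thm:main}); what your approach buys is self-containment relative to \cite{Nee08}, substituting the deeper purity/contractibility result with the elementary fact that acyclic bounded below complexes of injectives are contractible, at the cost of leaning more heavily on the dualizing complex. Your closing remark correctly identifies the key subtlety — that one cannot directly identify $R\otimes_A P^\bullet$ with $R\Lotimes_A P^\bullet$ — and this is in fact the same tension that \cite[Theorem 8.6]{Nee08} resolves in the paper's version.
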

\begin{proof}
(1) $\implies$ (2): if $\opn{FPD}(A) < \infty$,
we know by \cref{thm_proj} that every acyclic bounded below complex $M$ of projective $A$-modules is null-homotopic.
According to \cite[Lemma 8.5]{Nee08} (and its proof),
one obtains that $M$ is a filtered direct limit of acyclic bounded below complexes of finitely generated projective $A$-modules.
(2) $\implies$ (1): by \cref{cor:fpd-small}, 
we know that any acyclic bounded below complex of finitely generated projective $A$-modules is null-homotopic.
The assumption thus implies that any 
acyclic bounded below complex $M$ of projective $A$-modules is a filtered direct limit of null-homotopic complexes of projective
$A$-modules. By \cite[Theorem 8.6]{Nee08},
this implies that $M$ itself is null-homotopic,
so by \cref{thm:main}, we deduce that $\opn{FPD}(A) < \infty$.
\end{proof}

\section{Gorenstein symmetry}

In this section we return to the Gorenstein symmetry problem discussed in the introduction.
We start with the following fact,
which is probably known,
and give a proof of it using the covariant Grothendieck duality.

\begin{prop}\label{prop:flatdimOfDual}
Let $A$ be a left and right noetherian ring with a dualizing complex.
Then the following are equivalent:
\begin{enumerate}
\item Every injective $A$-module has finite projective dimension.
\item Every injective $A$-module has finite flat dimension.
\item There is a dualizing complex $R$ over $A$ such that $\flatdim_A(R) < \infty$.
\end{enumerate}
\end{prop}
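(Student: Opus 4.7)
The overall plan is to reduce everything to two ingredients: the theorem \cite{Jor05} that, over a noetherian ring with a dualizing complex, every flat module has finite projective dimension, and the duality-theoretic fact — already invoked in the proof of \cref{thm:acyclic} via \cite[Theorem~2.7(2)]{InKr} — that $\opn{Hom}_A(R,-)$ sends bounded complexes of injective $A$-modules to bounded complexes of flat $A$-modules.

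The direction $(1)\Rightarrow(2)$ is immediate from the inequality $\flatdim_A(M)\le \projdim_A(M)$ (a special case of \cref{prop:ineqflatproj}). For $(2)\Rightarrow(1)$, regard $A$ as a DG-ring concentrated in degree zero; the theorem from \cite{Jor05} supplies the hypothesis of \cref{prop:flatProjdim}, so any module of finite flat dimension has finite projective dimension, and in particular (2) gives (1). For $(2)\Rightarrow(3)$, take any dualizing complex $R$: it is a bounded complex of injective $A$-modules, each of which has finite flat dimension by (2), and hence $\flatdim_A(R)<\infty$ by induction on the length of $R$ via the brute-truncation triangles $\sigma_{>n}R\to R\to \sigma_{\le n}R\to$.

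The substantive step is $(3)\Rightarrow(2)$. Fix a dualizing complex $R$ with $\flatdim_A(R)<\infty$ and an injective $A$-module $I$. Since $R$ is bounded, $\opn{Hom}_A(R,I)$ is a bounded complex, and by \cite[Theorem~2.7(2)]{InKr} each of its terms is a flat $A$-module, so $\flatdim_A(\opn{Hom}_A(R,I))<\infty$. By the covariant Grothendieck duality \cite[Theorem~4.8]{InKr}, the counit of the equivalence $R\otimes_A -\dashv q\circ \opn{Hom}_A(R,-)$ evaluated at $I$ yields an isomorphism $I\simeq R\otimes^{\mrm{L}}_A \opn{Hom}_A(R,I)$ in $\cat{D}(A)$. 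A derived tensor product of two complexes each of finite flat dimension over $A$ again has finite flat dimension (by a straightforward bookkeeping: for any right $A$-module $N$, both $N\otimes^{\mrm{L}}_A R$ and its subsequent derived tensor with $\opn{Hom}_A(R,I)$ have cohomological amplitudes bounded uniformly in $N$), and consequently $\flatdim_A(I)<\infty$. The main obstacle is the term-wise flatness of $\opn{Hom}_A(R,I)$ — a nontrivial property of dualizing complexes recorded in \cite{InKr}; once it is available, the remainder of the argument is purely formal.
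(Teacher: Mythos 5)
Your proof is correct and follows essentially the same route as the paper's: the equivalence of (1) and (2) via the theorem of J\o{}rgensen \cite{Jor05} that over a noetherian ring with a dualizing complex every module of finite flat dimension has finite projective dimension, and the step $(3)\Rightarrow(2)$ via the covariant Grothendieck duality of \cite{InKr}. The one place where you differ cosmetically is in $(3)\Rightarrow(2)$: the paper explicitly introduces $P = q(\opn{Hom}_A(R,I))$, notes it is a bounded complex of projectives (hence K-flat), and then argues that $R\otimes_A P \cong R\otimes^{\mrm{L}}_A P$ in $\cat{D}(A)$ has finite flat dimension; you collapse this by writing $I \cong R\otimes^{\mrm{L}}_A \opn{Hom}_A(R,I)$ directly, which is legitimate since $q(F) \to F$ is a quasi-isomorphism for $F$ a complex of flat modules, but you should really spell out this intermediate step rather than attributing the displayed isomorphism to ``the counit of the equivalence'' (the equivalence lives on the homotopy categories $\cat{K}(\opn{Proj})$ and $\cat{K}(\opn{Inj})$, and getting an identification in $\cat{D}(A)$ with the derived tensor against the non-projective complex $\opn{Hom}_A(R,I)$ requires that small extra argument). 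Also, for the term-wise flatness of $\opn{Hom}_A(R,I)$ the paper cites \cite[Lemma~4.1(b)]{Pos}, not \cite[Theorem~2.7(2)]{InKr} — the latter is about the adjoint $q$ sending bounded complexes of flats to bounded complexes of projectives, which is a different fact — so your citation there should be corrected.
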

\begin{proof}
Since $A$ has a dualizing complex,
by \cite[Theorem]{Jor05}, every $A$-module of finite flat dimension has finite projective dimension, 
which shows that (1)$\iff$(2).
Let $R$ be a dualizing complex over $A$.
Since by definition it has finite injective dimension over $A$,
it is clear that (2)$\implies$(3).
To finish the proof, we must show that (3)$\implies$(2),
so suppose that $\flatdim_A(R) < \infty$,
and let $I$ be an injective $A$-module.
Applying the covariant Grothendieck duality,
let $P = q(\opn{Hom}_A(R,I)) \in \cat{K}(\opn{Proj}(A))$.
It follows from \cite[Lemma 4.1(b)]{Pos} that $\opn{Hom}_A(R,I)$ is a bounded complex of flat $A$-modules,
so using \cite[Theorem]{Jor05} again,
we deduce from \cite[Theorem 2.7]{InKr}  that $P$ is a bounded complex of projective $A$-modules.
In particular, $P$ is K-flat, 
so there is an isomorphism
\[
R\otimes^{\mrm{L}}_A P \cong R\otimes_A P,
\]
in $\cat{D}(A)$
and since the left hand side has finite flat dimension over $A$,
we deduce that $R\otimes_A P$ has finite flat dimension over $A$.
According to \cite[Theorem 4.8]{InKr},
there is an isomorphism $R\otimes_A P \cong I$ in $\cat{K}(\opn{Inj}(A))$,
so a fortiori there is such an isomorphism in $\cat{D}(A)$,
showing that $\flatdim_A(I) < \infty$.
\end{proof}

The relation between Gorensteinness and dualizing complexes is expressed in the following elementary fact:
\begin{prop}\label{prop:gorDual}
Let $A$ be a left and right noetherian ring,
such that $\injdim_A(A) = \injdim_{A^{\op}}(A^{\op}) < \infty$.
 If the regular bimodule $\phantom{ }_A A_A$ has a bi-injective resolution,
 then $A$ has a dualizing complex.
\end{prop}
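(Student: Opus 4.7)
The plan is to observe that under the hypotheses the regular bimodule $\phantom{}_A A_A$ is itself a weak dualizing complex, and then invoke \cref{prop:bi-injective} to upgrade a bi-injective resolution of this weak dualizing complex to an honest dualizing complex.

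First, I would verify that $\phantom{}_A A_A$ satisfies the three conditions of \cref{dfn:weak}. Condition (1) is precisely the hypothesis $\injdim_A(A) = \injdim_{A^{\op}}(A^{\op}) < \infty$. For condition (2), the complex $A$ is concentrated in degree $0$ with $\mrm{H}^0 = A$, which is finitely generated both as a left and as a right module over itself because $A$ is left and right noetherian (and unital). Condition (3) reduces to the well-known fact that the homothety maps $A \to \RHom_A(A,A)$ and $A \to \RHom_{A^{\op}}(A,A)$ are isomorphisms, since $A$ is a projective generator on each side and $\opn{End}_A(A) \cong A^{\op}$ with the obvious identifications yielding the claim. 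Thus $\phantom{}_A A_A$ is a weak dualizing complex.

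Having established this, the second step is immediate: by hypothesis, $\phantom{}_A A_A$ admits a bi-injective resolution, so \cref{prop:bi-injective} applied to the weak dualizing complex $R = A$ produces a dualizing complex over $A$. Concretely, the proposition takes the bi-injective resolution $A \to I$, and observes that although $I$ need not be bounded, the finiteness of injective dimension on both sides allows a smart truncation $\sigma_{\le n}(I)$ for $n$ sufficiently large to be quasi-isomorphic to $I$ while being bounded and simultaneously injective on both sides, thereby yielding a dualizing complex.

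There is no real obstacle here beyond checking the definitions: the whole content is that a bi-injective resolution of $\phantom{}_A A_A$ \emph{is} a (possibly unbounded) complex of bimodules satisfying all the dualizing complex axioms apart from boundedness, and smart truncation handles boundedness by means of the finite injective dimension hypothesis, exactly as in \cite[Proposition I.7.6]{RD}. The only subtle point worth noting is that one genuinely needs the bi-injective hypothesis rather than merely a one-sided injective resolution, since the output must be a complex of bimodules whose forgetful restrictions to each side consist of injective modules.
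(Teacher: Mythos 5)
Your proof is correct and follows exactly the same route as the paper: observe that the hypotheses make $\phantom{}_A A_A$ a weak dualizing complex (the paper even notes this as a key example right after \cref{dfn:weak}), then apply \cref{prop:bi-injective}. You spell out the verification of the three conditions in \cref{dfn:weak} more explicitly than the paper does, but this is just unwinding the definition.
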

\begin{proof}
Since $\injdim_A(A) = \injdim_{A^{\op}}(A^{\op}) < \infty$,
the regular bimodule $\phantom{ }_A A_A$ is a weak dualizing complex (in the sense of \cref{dfn:weak}) over itself,
so by \cref{prop:bi-injective}, 
there is a dualizing complex over $A$.
\end{proof}

We now turn to give various necessary and sufficient conditions for Gorenstein symmetry to hold.

\begin{thm}\label{thm:Gorenstein}
Let $A$ be a ring which is left and right noetherian,
and such that the regular bimodule $\phantom{ }_A A_A$ has a bi-injective resolution.
Assume that $\injdim_A(A) < \infty$.
Then the following are equivalent:
\begin{enumerate}
\item $A$ satisfies the Gorenstein symmetry conjecture: It holds that $\injdim_{A^{\op}}(A^{\op}) < \infty$.
\item $A$ has a dualizing complex and $\opn{Coloc}(\opn{Proj}(A)) = \cat{D}(A)$.
\item $A$ has a dualizing complex and $\opn{Coloc}(\opn{Flat}(A)) = \cat{D}(A)$.
\item $A$ has a dualizing complex, and every bounded above complex of injective $A$-modules which is acyclic is null-homotopic.
\item $A$ has a dualizing complex, and every injective $A$-module has finite flat dimension.
\item $A$ has a dualizing complex $R$ which has finite flat dimension as a complex of left $A$-modules.
\end{enumerate}
\end{thm}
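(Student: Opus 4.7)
The plan is to close a cycle of implications for all six equivalences, with $(4)\Rightarrow(1)$ being the main technical obstacle where the bi-injective resolution hypothesis and the assumption $\injdim_A(A)<\infty$ enter essentially. I will establish the ``classical'' equivalence $(1)\iff(5)\iff(6)$ first, route through $(2)\iff(3)$ to reach $(4)$, and finally close the loop.

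For $(1)\Rightarrow(6)$: under (1) the regular bimodule $_A A_A$ satisfies all axioms of a weak dualizing complex in the sense of \cref{dfn:weak}, since both injective dimensions are finite, the cohomology is concentrated in degree zero and finitely generated on both sides, and the homothety maps are identities. Combining this with the bi-injective resolution hypothesis, \cref{prop:bi-injective} produces a dualizing complex $R$, realized as a smart truncation of the bi-injective resolution of $A$; this $R$ is quasi-isomorphic to $A$ as a bimodule, hence flat dimension zero as a left $A$-module. For $(6)\Rightarrow(1)$, given $R$ with $\flatdim_A(R)<\infty$, Jorgensen's theorem promotes this to $\projdim_A(R)<\infty$, so there is a bounded resolution $P\to R$ by finitely generated left projective $A$-modules. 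The complex $\opn{Hom}_A(P,R)$ is then bounded, computes $\mrm{R}\opn{Hom}_A(R,R)\cong A$ as right $A$-modules via the homothety isomorphism, and each term $\opn{Hom}_A(P^{-i},R^j)$ is a summand of a finite product $(R^j)^{n_i}$ of the right-injective bimodule $R^j$, hence right injective. This yields $\injdim_{A^{\op}}(A)<\infty$.

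The equivalence $(5)\iff(6)$ is \cref{prop:flatdimOfDual}. For $(5)\Rightarrow(2)$, Jorgensen's theorem converts finite flat dimension into finite projective dimension, so every injective left $A$-module lies in the thick subcategory generated by $\opn{Proj}(A)$; combined with the standard fact $\opn{Coloc}(\opn{Inj}(A))=\cat{D}(A)$, this gives $\opn{Coloc}(\opn{Proj}(A))=\cat{D}(A)$. Then $(2)\iff(3)$ is \cref{rem:flat} and $(2)\Rightarrow(4)$ is \cref{thm:acyclic}.

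The closing implication $(4)\Rightarrow(1)$ is the hardest step and where I expect to spend the bulk of the work. The plan is first to apply \cref{thm:FPD} to deduce $\opn{FID}(A)<\infty$ from (4), and then to invoke \cref{thm:main} to obtain all the associated one-sided finitistic bounds $\opn{FPD}(A^{\op}),\opn{FFD}(A^{\op})<\infty$ together with their homotopical counterparts (colocalizing generation and vanishing of acyclic complexes). The remaining task is to convert these one-sided statements, via the two structural hypotheses, into the two-sided conclusion. Letting $I$ denote the bi-injective resolution of $_A A_A$ and $n_0=\injdim_A(A)$, the cocycle $Z^n\subseteq I^n$ is left-injective for all $n\ge n_0$, so the smart truncation $\sigma^{\le n_0}(I)$ is a bounded complex of bimodules, quasi-isomorphic to $A$, left-injective in every degree, and right-injective in every degree except possibly the top. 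The main obstacle is to use the finitistic bounds coming from (4), together with the dualizing complex, to show that the top cocycle $Z^{n_0}$ is also right-injective, so that $\sigma^{\le n_0}(I)$ becomes a bounded bi-injective bimodule resolution of $A$, realizing $A$ itself as a dualizing complex and giving $\injdim_{A^{\op}}(A)<\infty$.
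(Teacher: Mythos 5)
Your overall plan — closing a cycle $(1)\Rightarrow(5)/(6)\Rightarrow(2)/(3)\Rightarrow(4)\Rightarrow(1)$ with the last arrow as the crux — matches the paper's structure, and most of the implications you establish are correct. Your direct argument for $(6)\Rightarrow(1)$, taking a bounded finitely generated projective resolution $P\to R$ of the dualizing complex and observing that $\opn{Hom}_A(P,R)$ is a bounded complex of right-injective modules quasi-isomorphic to $A$ via the homothety isomorphism, is a pleasant hands-on alternative to the paper's route, which instead proves $(5)\Rightarrow(3)$ via colocalizing subcategories and $(1)\Rightarrow(5)$ via the Enochs--Jenda characterizations of flat/cotorsion duality. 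Your $(1)\Rightarrow(6)$ via the weak-dualizing-complex axioms and \cref{prop:bi-injective} is also fine.

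However, there is a genuine gap at $(4)\Rightarrow(1)$, which you yourself flag as the ``main obstacle,'' and your proposed strategy for it does not close. After deducing $\opn{FID}(A)<\infty$ from \cref{thm:FPD} and then $\opn{FPD}(A^{\op})<\infty$ from \cref{thm:main}, you propose to show that the $n_0$-th cocycle $Z^{n_0}$ of the bi-injective resolution of $\phantom{}_A A_A$ (with $n_0=\injdim_A(A)$) is right-injective. But the bi-injective resolution is in particular a right-injective resolution of $A_A$, so asserting that $Z^{n_0}$ is right-injective is \emph{literally} the statement $\injdim_{A^{\op}}(A^{\op})\le n_0$, which is the Gorenstein symmetry conclusion (together with the Zaks bound). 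You give no mechanism that extracts this from $\opn{FPD}(A^{\op})<\infty$; as written, the step is circular. The missing ingredient is a theorem of Auslander and Reiten \cite[Proposition 6.10]{AusRei}: for a two-sided noetherian ring, $\opn{FPD}(A^{\op})<\infty$ together with $\injdim_A(A)<\infty$ forces $\injdim_{A^{\op}}(A^{\op})<\infty$. The paper closes $(4)\Rightarrow(1)$ by invoking this result directly; without it, or a genuine substitute, the equivalence does not close.
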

\begin{proof}
(1) $\implies$ (3):  If $\injdim_{A^{\op}}(A^{\op}) < \infty$,
then by \cite[Theorem 3.2]{Rickard}, 
it holds that $\opn{Loc}(\opn{Inj}(A^{\op})) = \cat{D}(A^{\op})$,
so by \cref{thm:injImplyFlat} it follows that $\opn{Coloc}(\opn{Flat}(A)) = \cat{D}(A)$. Moreover,
by \cref{prop:gorDual}, $A$ has a dualizing complex.
(2) $\iff$ (3): this follows from \cite[Theorem]{Jor05},
since over a ring with a dualizing complex, a module has finite flat dimension if and only if it has finite projective dimension.
(2) $\implies$ (4): this is \cref{thm:acyclic}.
(4) $\implies$ (1): according to \cref{thm:FPD},
the assumption implies that $\opn{FPD}(A^{\op}) < \infty$,
and by \cite[Proposition 6.10]{AusRei}, 
this implies that $\injdim_{A^{\op}}(A^{\op}) < \infty$.
(5) $\iff$ (6): this is contained in \cref{prop:flatdimOfDual}.
(5) $\implies$ (3): If every injective $A$-module has finite flat dimension, then every injective $A$-module is contained in $\opn{Coloc}(\opn{Flat}(A))$,
so in particular $\opn{Coloc}(\opn{Inj}(A)) \subseteq \opn{Coloc}(\opn{Flat}(A))$, 
but for any ring it holds that $\opn{Coloc}(\opn{Inj}(A)) = \cat{D}(A)$, 
establishing the claim.
Finally, we show that (1) $\implies$ (5): Let $I$ be a left injective $A$-module,
and let $F = \opn{Hom}_{\mathbb{Z}}(I,\mathbb{Q}/\mathbb{Z})$.
By \cite[Theorem 3.2.16]{EnJen}, the right $A$-module $F$ is flat, so in particular it has finite projective dimension,
and hence, by assumption, also finite injective dimension over $A^{\op}$.
Hence, by \cite[Theorem 3.2.19]{EnJen}, 
it follows that $I$ has finite flat dimension over $A$.
\end{proof}

\begin{rem}
There is some resemblance between the fact that (4)$\iff$(5) and \cite[Corollary 5.12]{InKr},
where it is shown that for a left and right noetherian ring $A$ with a dualizing complex, the dualizing complex has finite projective dimension over $A$ if and only if every acyclic complex of injective $A$-modules is totally acyclic.
There are however several differences between the results.
First, our result is concerned only with bounded above complexes of injectives, while there the condition is on all complexes of injectives.
Secondly, our conclusion that such complexes are null-homotopic is stronger than being totally acyclic.
\end{rem}

\newcommand{\fp}{\frak{p}}
\def\FID{\operatorname{FID}}
\def\FFD{\operatorname{FFD}}
\def\FPD{\operatorname{FPD}}
\newcommand{\ZZ}{\mathbb{Z}}
\newcommand{\Z}{\operatorname{Z}}
\newcommand{\id}{\operatorname{inj dim}}
\newcommand{\Ext}{\operatorname{Ext}}
\newcommand{\fpd}{\opn{fpd}}

\appendix
\section{ \\ by Tsutomu Nakamura and Peder Thompson}

Let $A$ be a ring. The aim of this appendix is to show that if the injective, flat, or projective finitistic dimension of $A$ is finite, then certain acyclic complexes must be null-homotopic.

Let $M$ be an acyclic (cochain) complex of $A$-modules. Set $\Z^n(M)=\ker(M^n\to M^{n+1})$ for each $n\in\ZZ$, and consider the exact sequences 
\[\tag{$\dagger$}\xymatrix{
0\ar[r] & \Z^n(M)\ar[r] & M^n\ar[r] & \Z^{n+1}(M)\ar[r] & 0\;.
}\]
The complex $M$ is \emph{null-homotopic} if the identity $1_M:M\to M$ is homotopic to zero; equivalently, the sequence $(\dagger)$ splits for every $n\in\ZZ$; see, e.g., \cite[Exercise 1.4.3]{Wei94}. The complex $M$ is \emph{pure acyclic} if the exact sequences $(\dagger)$ remain exact upon application of $N\otimes_A-$ for any $A^{\operatorname{op}}$-module $N$.

\begin{thm}\label{FID_fin_nh}
Let $A$ be a ring with $\FID(A)<\infty$. Every acyclic bounded above complex of injective $A$-modules is null-homotopic. 
\end{thm}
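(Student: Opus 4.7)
The plan is to show directly that every cosyzygy $\Z^n(M)$ is itself an injective $A$-module; once this is done, every short exact sequence $(\dagger)$ splits because the inclusion of an injective submodule always admits a retraction, and therefore $M$ is null-homotopic. The key input is that bounded-aboveness and acyclicity of $M$ equip each cosyzygy with a finite injective resolution, which, combined with $\FID(A) < \infty$, will force $\injdim_A \Z^n(M)$ to be uniformly bounded.

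Concretely, I would fix $N$ so that $M^k = 0$ for all $k > N$. For each $n \in \ZZ$, acyclicity of $M$ produces the exact sequence
\[
0 \to \Z^n(M) \to M^n \to M^{n+1} \to \cdots \to M^N \to 0,
\]
which is a finite injective resolution of $\Z^n(M)$ of length $N - n$. In particular $\injdim_A \Z^n(M) < \infty$, and the hypothesis $\FID(A) = d < \infty$ then forces $\injdim_A \Z^n(M) \le d$ for every $n$.

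The last step invokes the standard fact that whenever $\injdim_A X \le d$, the $d$-th cosyzygy of any injective resolution of $X$ is itself injective (by dimension-shifting, since $\Ext^1_A(-,\text{$d$-th cosyzygy}) \cong \Ext^{d+1}_A(-,X) = 0$). Applied to the resolution of $\Z^n(M)$ above, this $d$-th cosyzygy equals $\ker(M^{n+d} \to M^{n+d+1}) = \Z^{n+d}(M)$, which is therefore injective; if $n + d > N$, this cosyzygy is simply $0$, trivially injective. Letting $n$ range over $\ZZ$ then shows that $\Z^m(M)$ is injective for every $m$, and consequently every $(\dagger)$ splits.

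I do not anticipate a serious obstacle: once one observes that bounded-above acyclicity encodes a finite injective resolution of every cosyzygy, the proof is immediate from the definition of $\FID(A)$ and standard truncation of injective resolutions. The only bookkeeping is the edge case where the natural resolution of $\Z^n(M)$ has length less than $d$, but then the would-be cosyzygy is $0$ and there is nothing to check.
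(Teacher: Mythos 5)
Your proof is correct and takes essentially the same approach as the paper: use bounded-aboveness to see that every cocycle module has finite injective dimension (hence at most $\FID(A)$), then apply dimension-shifting along the complex itself to conclude that each cocycle is injective and the short exact sequences split. The only difference is cosmetic re-indexing (you shift forward from $\Z^n$ to $\Z^{n+d}$ while the paper shifts backward from $\Z^{n-s}$ to $\Z^n$).
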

\begin{proof}
Let $I$ be an acyclic complex of injective $A$-modules such that $I^n=0$ for $n\gg0$.  To show $I$ is null-homotopic, it suffices to show that each cocycle module $\Z^n(I)$ is injective, as then $0\to \Z^n(I) \to I^n\to \Z^{n+1}(I)\to 0$ splits for all $n\in \ZZ$. 

Set $s=\FID(A)<\infty$ and fix $n\in \ZZ$. As $I$ is bounded above, $\Z^{n-s}(I)$ has finite injective dimension, which must be at most $s$.  The module $\Z^n(I)$ is an $s$-th injective cosyzygy of $\Z^{n-s}(I)$, and thus must be injective. 
\end{proof}

The next result is essentially contained in \cite[Remark 5.7]{NT20}, but the proof given here, using Theorem \ref{FID_fin_nh}, does not explicitly rely on minimality or support theory.
\begin{cor}\label{cor:acyc-inj-general}
Let $A$ be a commutative noetherian ring. Every acyclic bounded above complex of injective $A$-modules is null-homotopic.
\end{cor}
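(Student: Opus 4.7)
The plan is to reduce the statement to Theorem \ref{FID_fin_nh} by localizing at each prime of $A$.

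First, I would observe that $I$ is null-homotopic if and only if every cocycle module $\Z^n(I)$ is an injective $A$-module: given such injectivity, each short exact sequence $0 \to \Z^n(I) \to I^n \to \Z^{n+1}(I) \to 0$ splits, and assembling the splittings produces a contracting homotopy on $I$. Thus the target is to show that $\Z^n(I)$ is injective for every $n \in \mathbb{Z}$.

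Next, I would use that injectivity over a commutative noetherian ring is a local property. Since minimal injective resolutions are preserved by localization, for any $A$-module $M$ one has the standard equality $\injdim_A(M) = \sup_{\p \in \opn{Spec}(A)} \injdim_{A_\p}(M_\p)$, so $M$ is injective iff $M_\p$ is injective over $A_\p$ for every prime $\p$. Applying this to $M = \Z^n(I)$, and using that localization commutes with taking cocycles, the task becomes: for each prime $\p$, show that the localization $I_\p$ is null-homotopic over $A_\p$.

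Now $I_\p$ is again bounded above and acyclic, and each $I^n_\p$ is injective over $A_\p$, since localization preserves injectivity over a noetherian base. The ring $A_\p$ is commutative noetherian local of Krull dimension $\opn{ht}(\p)$, which is finite by Krull's height theorem (every prime in a noetherian ring has finite height). By the classical theorem of Bass, $\opn{FID}(A_\p) \le \dim A_\p < \infty$, so Theorem \ref{FID_fin_nh} applies to $I_\p$, showing that $I_\p$ is null-homotopic. This forces $\Z^n(I)_\p = \Z^n(I_\p)$ to be injective over $A_\p$, and the local-to-global principle above then gives injectivity of $\Z^n(I)$ over $A$.

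The only substantive input is the local-to-global principle for injectivity, encoded in the Bass-number identity $\injdim_A(M) = \sup_\p \injdim_{A_\p}(M_\p)$; this is precisely what lets us bypass the fact that $\dim A$ may be infinite by passing to $A_\p$, where finite height of primes guarantees finite Krull dimension and hence a finite bound on $\opn{FID}$.
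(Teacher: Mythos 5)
Your proof is correct and follows essentially the same route as the paper: reduce to each localization $A_\p$, invoke finiteness of $\opn{FID}(A_\p)$ for the local noetherian ring $A_\p$, apply Theorem~\ref{FID_fin_nh} to conclude $I_\p$ is null-homotopic, and then use the local nature of injectivity over a noetherian ring to deduce that the cocycle modules $\Z^n(I)$ are injective over $A$. The only cosmetic difference is the citation for $\opn{FID}(A_\p)<\infty$ (you invoke Bass's theorem directly, the paper cites Auslander--Buchsbaum together with Matlis); the mathematical content is identical.
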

\begin{proof}
Let $I$ be an acyclic complex of injective $A$-modules such that $I^n=0$ for $n\gg0$. For every prime ideal $\fp$ of $A$, the localization $I_\fp$ is also an acyclic bounded above complex of injective $A_\fp$-modules. Finiteness of $\FID(A_\fp)$, see \cite[Theorem 2.4]{AB58} and \cite[Theorem 1]{Mat59}, yields that $I_\fp$ is null-homotopic by Theorem \ref{FID_fin_nh}. As remarked above, this means that each cycle module $\Z^n(I)_\fp$ is injective. Thus $\Z^n(I)$ is injective as well.
\end{proof}

We turn to considering finiteness of flat and projective finitistic dimensions.

\begin{thm}\label{FFD_fin_pa}
Let $A$ be a ring with $\FFD(A)<\infty$. Every acyclic bounded below complex of flat $A$-modules is pure acyclic. 
\end{thm}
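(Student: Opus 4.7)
The plan is to mirror the proof of Theorem~\ref{FID_fin_nh} above, exchanging injective modules for flat modules, cosyzygies for syzygies, and injective dimension for flat dimension. First I would reduce the claim to showing that each cocycle $\Z^n(F)$ of an acyclic bounded below complex $F$ of flat $A$-modules is itself flat. Indeed, once every $\Z^{n+1}(F)$ is flat, one has $\opn{Tor}_1^A(N,\Z^{n+1}(F))=0$ for every right $A$-module $N$, so each short exact sequence $0\to \Z^n(F)\to F^n\to \Z^{n+1}(F)\to 0$ remains exact after applying $N\otimes_A -$, which is exactly pure acyclicity of $F$.

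To establish flatness of the cocycles, set $s=\FFD(A)$ and fix $n\in\ZZ$. Since $F$ is bounded below, say $F^k=0$ for $k<k_0$, truncating $F$ above degree $n+s-1$ yields a finite flat resolution
\[
0\to F^{k_0}\to F^{k_0+1}\to\cdots\to F^{n+s-1}\to \Z^{n+s}(F)\to 0
\]
of $\Z^{n+s}(F)$. In particular $\flatdim_A(\Z^{n+s}(F))<\infty$, and the definition of $\FFD(A)$ then forces $\flatdim_A(\Z^{n+s}(F))\le s$. Truncating $F$ between degrees $n$ and $n+s$ instead produces the exact sequence
\[
0 \to \Z^n(F) \to F^n \to F^{n+1} \to \cdots \to F^{n+s-1} \to \Z^{n+s}(F) \to 0,
\]
exhibiting $\Z^n(F)$ as an $s$-th flat syzygy of $\Z^{n+s}(F)$. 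Standard dimension shifting through the long exact sequence of $\opn{Tor}$, obtained by splitting this sequence into short exact sequences with flat middle terms, gives
\[
\opn{Tor}_i^A(N,\Z^n(F))\cong \opn{Tor}_{i+s}^A(N,\Z^{n+s}(F))=0
\]
for every right $A$-module $N$ and every $i\ge 1$. Hence $\Z^n(F)$ is flat, as desired.

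There is no substantial obstacle in this argument; it is a direct translation of the injective cosyzygy argument used for Theorem~\ref{FID_fin_nh}. The one point requiring a moment's attention is verifying that $\flatdim_A(\Z^{n+s}(F))$ is actually finite, which relies on both the boundedness from below of $F$ and the flatness of each $F^k$, so that the truncation really produces a finite flat resolution to which the $\FFD(A)$ bound can be applied.
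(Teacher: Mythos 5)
Your proof is correct and follows exactly the same route as the paper's: reduce to showing each cocycle $\Z^n(F)$ is flat, use bounded-belowness to see $\Z^{n+s}(F)$ has finite flat dimension $\le s = \opn{FFD}(A)$, and then note $\Z^n(F)$ is an $s$-th flat syzygy of it. The only difference is that you spell out the truncation and the $\opn{Tor}$ dimension-shifting in more detail than the paper does.
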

\begin{proof}
Let $F$ be an acyclic complex of flat $A$-modules such that $F^n=0$ for $n\ll0$. To show $F$ is pure acyclic, we need only show that the cycle module $\Z^n(F)$ is flat for every $n\in\ZZ$. 

Set $s=\FFD(A)$ and fix $n\in \ZZ$. As $F$ is bounded below, the module $\Z^{n+s}(F)$ has finite flat dimension, which must be at most $s$. The module $\Z^n(F)$ appears as an $s$-th flat syzygy of $\Z^{n+s}(F)$, and therefore must be flat. 
\end{proof}

\begin{cor}\label{comm_flat_pa}
Let $A$ be a commutative noetherian ring. Every acyclic bounded below complex of flat $A$-modules is pure acyclic.
\end{cor}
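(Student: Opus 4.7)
The plan is to mirror the proof of Corollary \ref{cor:acyc-inj-general} almost verbatim, trading injectivity for flatness and using Theorem \ref{FFD_fin_pa} instead of Theorem \ref{FID_fin_nh}. Specifically, pure acyclicity of a bounded below complex of flat modules is equivalent to the assertion that every cocycle module $\Z^n(F)$ is flat, and flatness over a commutative noetherian ring is a local property on $\opn{Spec}(A)$. So the argument should reduce at each prime $\fp$ to the corresponding local ring $A_\fp$, where finiteness of $\FFD(A_\fp)$ is already known.

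First, I would let $F$ be an acyclic complex of flat $A$-modules with $F^n = 0$ for $n \ll 0$. I would fix $n \in \ZZ$ and a prime $\fp \in \opn{Spec}(A)$, and observe that localization is exact and preserves both flatness and the vanishing degree, so $F_\fp$ is again an acyclic bounded below complex of flat $A_\fp$-modules, and $\Z^n(F)_\fp \cong \Z^n(F_\fp)$. The next step is to invoke that $A_\fp$ is a commutative noetherian local ring, hence of finite Krull dimension, so that $\FFD(A_\fp) < \infty$ by the Gruson--Raynaud / Jensen theorem bounding the flat finitistic dimension by the Krull dimension. Applying Theorem \ref{FFD_fin_pa} to $F_\fp$ then gives that $F_\fp$ is pure acyclic, and in particular that $\Z^n(F_\fp)$ is a flat $A_\fp$-module. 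Since this holds for every $\fp$, flatness being local yields that $\Z^n(F)$ is a flat $A$-module, and this for every $n \in \ZZ$ is exactly the condition for $F$ to be pure acyclic.

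I do not expect any serious obstacles; the only point requiring any care is citing the correct classical reference for $\FFD(A_\fp) < \infty$ (analogous to the role of \cite[Theorem 2.4]{AB58} and \cite[Theorem 1]{Mat59} in Corollary \ref{cor:acyc-inj-general}), and noting that unlike injectivity, pure acyclicity is not defined via splitting of the cycle sequences but via exactness after tensoring with any right module --- a distinction that is cleanly handled since both conditions are equivalent to the cocycle modules being flat for a bounded below complex of flats. No additional noetherian or dualizing hypotheses are needed beyond what is already granted locally.
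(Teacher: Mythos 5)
Your proof is correct and takes essentially the same route as the paper: localize at each prime, note that $\FFD(A_\fp)<\infty$ for the noetherian local ring $A_\fp$, apply Theorem \ref{FFD_fin_pa} to conclude $\Z^n(F)_\fp$ is flat, and then use that flatness is a local property to deduce $\Z^n(F)$ is flat for every $n$. The only cosmetic difference is the reference you give for $\FFD(A_\fp)<\infty$ (Gruson--Raynaud/Jensen versus the paper's citation of Auslander--Buchsbaum); both justify the same fact.
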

\begin{proof}
Let $F$ be an acyclic complex of flat $A$-modules such that $F^n=0$ for $n\ll0$. For every prime ideal $\fp$, the localization $F_\fp$ is an acyclic bounded below complex of flat $A_\fp$-modules. Finiteness of $\FFD(A_\fp)$, by \cite[Theorem 2.4]{AB58}, implies that $\Z^n(F)_\fp$ is flat for every $\fp$ by Theorem \ref{FFD_fin_pa}, and thus each $\Z^n(F)$ is flat. It follows that $F$ is pure acyclic.
\end{proof}

An $A$-module $C$ is called \emph{cotorsion} if $\Ext_A^1(F,C)=0$ holds for every flat $A$-module $F$. One then says that an $A$-module is \emph{flat cotorsion} if it is both flat and cotorsion. Every short exact sequence of flat cotorsion modules splits by definition.

\begin{cor}\label{FFD_fin_nh}
Let $A$ be a ring with $\FFD(A)<\infty$. Every acyclic bounded below complex of flat cotorsion $A$-modules is null-homotopic. 
\end{cor}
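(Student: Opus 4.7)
The plan is to bootstrap Theorem \ref{FFD_fin_pa} using the defining property of cotorsion modules. Let $F$ be an acyclic complex of flat cotorsion $A$-modules with $F^n = 0$ for $n \ll 0$. By Theorem \ref{FFD_fin_pa}, $F$ is pure acyclic; in particular, every cocycle module $\Z^n(F)$ is flat. To show $F$ is null-homotopic, by the criterion recalled at the start of the appendix, it suffices to show that each short exact sequence
\[
0 \to \Z^n(F) \to F^n \to \Z^{n+1}(F) \to 0 \tag{$\dagger$}
\]
splits.

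I would prove, by induction on $n$, that $\Z^n(F)$ is flat cotorsion for every $n \in \ZZ$. The base case is immediate: since $F$ is bounded below and acyclic, there is some $N$ with $\Z^N(F) = 0$, which is trivially flat cotorsion. For the inductive step, suppose $\Z^n(F)$ is flat cotorsion. We already know $\Z^{n+1}(F)$ is flat, and $F^n$ is flat cotorsion by hypothesis. The extension ($\dagger$) is classified by $\Ext^1_A(\Z^{n+1}(F), \Z^n(F))$, which vanishes because $\Z^{n+1}(F)$ is flat and $\Z^n(F)$ is cotorsion. Hence ($\dagger$) splits and $\Z^{n+1}(F)$ is a direct summand of $F^n$, and therefore flat cotorsion as well.

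Since every sequence ($\dagger$) splits, the identity map on $F$ is null-homotopic, which completes the proof. There is no real obstacle here: the whole argument is a direct consequence of Theorem \ref{FFD_fin_pa} (to get flatness of the cocycles) combined with the defining property $\Ext^1_A(\mathrm{flat},\mathrm{cotorsion}) = 0$ (to get splitting), propagated along the complex by the boundedness below assumption.
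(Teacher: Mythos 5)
Your proof is correct and follows essentially the same strategy as the paper's: apply Theorem~\ref{FFD_fin_pa} to get flatness of the cycle modules, show they are also cotorsion, and conclude that the sequences $(\dagger)$ split. The one small variation is in how cotorsionness of $\Z^n(F)$ is established. The paper invokes the fact that cotorsion modules form a coresolving class (so that a bounded below acyclic complex of cotorsion modules automatically has cotorsion cycles, independent of flatness), which implicitly relies on the hereditary property of the flat--cotorsion pair, i.e., $\Ext^i_A(\text{flat},\text{cotorsion})=0$ for all $i\ge 1$. You instead propagate cotorsionness by the inductive splitting argument, using only the defining $\Ext^1$-vanishing together with the flatness of $\Z^{n+1}(F)$ already supplied by Theorem~\ref{FFD_fin_pa}: once $(\dagger)$ splits, $\Z^{n+1}(F)$ is a summand of the flat cotorsion module $F^n$ and hence flat cotorsion. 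Your version is marginally more self-contained, at the cost of being slightly less modular since the cotorsion step is interleaved with the flatness step; both proofs are correct.
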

\begin{proof}
Let $F$ be an acyclic complex of flat cotorsion $A$-modules such that $F^n=0$ for $n\ll0$. Every acyclic bounded below complex of cotorsion $A$-modules has cotorsion cycle modules, so $\Z^n(F)$ is cotorsion for every $n\in\ZZ$; this is simply the coresolving property of cotorsion modules. Theorem \ref{FFD_fin_pa} implies further that $\Z^n(F)$ is flat for every $n\in\ZZ$, since pure acyclic complexes of flat modules have flat cycle modules; see e.g., \cite[Theorem 7.3]{CH15}. Hence every cycle module of $F$ is both flat and cotorsion. This yields that $F$ is null-homotopic.
\end{proof}

In the commutative noetherian case, we have the next result; cf. \cite[Lemma 5.6]{NT20}:
\begin{cor}\label{comm_fc}
Let $A$ be a commutative noetherian ring. Every acyclic bounded below complex of flat cotorsion $A$-modules is null-homotopic.
\end{cor}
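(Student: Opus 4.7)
The plan is to mimic the proof of Corollary \ref{FFD_fin_nh}, replacing the appeal to Theorem \ref{FFD_fin_pa} (which required $\FFD(A)<\infty$) with the unconditional Corollary \ref{comm_flat_pa} that has just been established over commutative noetherian rings. Once pure acyclicity is available for free, the rest of the argument transfers verbatim.

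Concretely, let $F$ be an acyclic bounded below complex of flat cotorsion $A$-modules. First, I would observe that every cycle module $\Z^n(F)$ is cotorsion: indeed, the class of cotorsion modules is closed under kernels of epimorphisms between cotorsion modules, so an inductive argument starting from the vanishing tail ($F^n=0$ for $n\ll 0$) and walking up the short exact sequences $(\dagger)$ shows that $\Z^n(F)$ is cotorsion for every $n$. Second, I would apply Corollary \ref{comm_flat_pa} to conclude that $F$ is pure acyclic, and then invoke \cite[Theorem 7.3]{CH15} (as in the proof of Corollary \ref{FFD_fin_nh}) to deduce that every $\Z^n(F)$ is flat.

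Having combined these two conclusions, each $\Z^n(F)$ is a flat cotorsion $A$-module. Then for every $n$, $\Ext_A^1(\Z^{n+1}(F),\Z^n(F))=0$ by definition of cotorsion, so the short exact sequence $(\dagger)$ splits. Since every sequence $(\dagger)$ splits, $F$ is null-homotopic, as required.

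There is really no serious obstacle: the only delicate point is checking that the two ingredients needed in the proof of Corollary \ref{FFD_fin_nh} --- the coresolving property of cotorsion modules and the fact that pure acyclic complexes of flat modules have flat cycles --- are available without any finitistic dimension hypothesis. The first is formal, and the second is exactly the content of \cite[Theorem 7.3]{CH15}, so replacing Theorem \ref{FFD_fin_pa} with Corollary \ref{comm_flat_pa} is a painless substitution.
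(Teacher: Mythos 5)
Your proof is correct and follows exactly the route the paper takes: replace the appeal to Theorem \ref{FFD_fin_pa} in the proof of Corollary \ref{FFD_fin_nh} with Corollary \ref{comm_flat_pa}, deduce that every cycle module is flat and cotorsion, and conclude that each sequence $(\dagger)$ splits. The paper states this as ``immediate from Corollary \ref{comm_flat_pa} and the proof of Corollary \ref{FFD_fin_nh}''; you have simply spelled out the same argument.
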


\begin{proof}
This is immediate from Corollary \ref{comm_flat_pa} and the proof of Corollary \ref{FFD_fin_nh}.
\end{proof}

\begin{thm}\label{thm_proj}
Let $A$ be a ring with either $\FPD(A)<\infty$ or $\FFD(A)<\infty$. Every acyclic bounded below complex of projective $A$-modules is null-homotopic.
\end{thm}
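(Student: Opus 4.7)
The plan is to reduce null-homotopy of $P$ to the splitting of each short exact sequence $(\dagger)_n : 0 \to \Z^n(P) \to P^n \to \Z^{n+1}(P) \to 0$. Since $P^n$ is projective, this splits precisely when $\Z^{n+1}(P)$ is projective, so it suffices to show that every cocycle module of $P$ is projective. The bounded below assumption, say $P^k = 0$ for $k < m$, together with acyclicity of $P$, yields for each $n$ and each $s \ge 1$ a finite projective resolution
\[
0 \to P^m \to P^{m+1} \to \cdots \to P^{n+s-1} \to \Z^{n+s}(P) \to 0,
\]
which is the common input for both cases.

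For the case $\FPD(A) = s < \infty$, I would mimic the proof of Theorem \ref{FID_fin_nh}. The resolution above gives $\projdim_A(\Z^{n+s}(P)) < \infty$, whence $\projdim_A(\Z^{n+s}(P)) \le s$ by definition of $\FPD$. The module $\Z^n(P)$ then appears as the $s$-th syzygy of $\Z^{n+s}(P)$ in
\[
0 \to \Z^n(P) \to P^n \to P^{n+1} \to \cdots \to P^{n+s-1} \to \Z^{n+s}(P) \to 0,
\]
and is therefore projective, as required.

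For the case $\FFD(A) < \infty$, the same syzygy argument only yields that each $\Z^n(P)$ is \emph{flat}; a flat submodule of a projective need not be a direct summand, so this alone does not split $(\dagger)_n$. Instead, I would invoke Theorem \ref{FFD_fin_pa} to deduce that $P$ is pure acyclic, and then appeal to the result, due essentially to Neeman \cite{Nee08}, that every pure acyclic complex of projective modules is null-homotopic.

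The main obstacle is precisely this pure-acyclic-to-contractible input for complexes of projectives: pure acyclicity forces the cycles to be flat and the sequences $(\dagger)_n$ to be pure exact, but not split on the nose. One could alternatively first quote Jensen's implication $\FPD(A)<\infty \Rightarrow \FFD(A)<\infty$ cited in the main text to subsume Case 1 inside Case 2, after which the whole theorem reduces to a single application of Theorem \ref{FFD_fin_pa} combined with the Neeman result; but the elementary syzygy proof in the $\FPD$ case is worth presenting separately for its transparency and its close parallel with Theorem \ref{FID_fin_nh}.
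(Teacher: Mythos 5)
Your proof is correct and takes essentially the same route as the paper: the $\FPD$ case is the syzygy argument dual to Theorem \ref{FID_fin_nh}, and the $\FFD$ case routes through Theorem \ref{FFD_fin_pa} plus Neeman's result that pure acyclic complexes of projectives are null-homotopic. Your aside about the flat-but-not-split obstruction, and the alternative of subsuming the $\FPD$ case under the $\FFD$ case via Jensen, are accurate and complement the paper's terse exposition, but do not constitute a different method.
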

\begin{proof}
If $\FPD(A)$ is finite, the argument is dual to that of Theorem \ref{FID_fin_nh}. If $\FFD(A)$ is finite, one employs Theorem \ref{FFD_fin_pa} along with the fact that every pure acyclic complex of projective modules (over any ring) is null-homotopic; see \cite[Remark 2.15 and Theorem 8.6]{Nee08}, and also \cite[Proposition 7.6]{CH15}.
\end{proof}

\begin{cor}\label{cor:comm-proj}
Let $A$ be a commutative noetherian ring. Every acyclic bounded below complex of projective $A$-modules is null-homotopic.
\end{cor}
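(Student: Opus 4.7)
The plan is to combine two ingredients that are already at our disposal: Corollary \ref{comm_flat_pa}, which says that over a commutative noetherian ring every acyclic bounded below complex of flat modules is pure acyclic, and the general principle that pure acyclic complexes of projective modules are null-homotopic over any ring, as recorded in the proof of Theorem \ref{thm_proj}.

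Concretely, I would let $P$ be an acyclic bounded below complex of projective $A$-modules. Since projective modules are flat, $P$ is, a fortiori, an acyclic bounded below complex of flat $A$-modules. Applying Corollary \ref{comm_flat_pa} directly, $P$ is pure acyclic; equivalently, each cycle module $\Z^n(P)$ is flat and each short exact sequence
\[
0 \to \Z^n(P) \to P^n \to \Z^{n+1}(P) \to 0
\]
remains exact after tensoring with an arbitrary $A^{\operatorname{op}}$-module. Invoking \cite[Remark 2.15 and Theorem 8.6]{Nee08} (equivalently, \cite[Proposition 7.6]{CH15}), which guarantees that a pure acyclic complex of projective modules over any ring is null-homotopic, the complex $P$ is null-homotopic, which is what we wanted.

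There is essentially no real obstacle, since both ingredients are in place; the one subtle point worth flagging is that one cannot simply apply Theorem \ref{thm_proj} globally, because a commutative noetherian ring of infinite Krull dimension can have infinite $\FPD$ and $\FFD$. The prime-by-prime mechanism is what saves the day: if one prefers to avoid citing Corollary \ref{comm_flat_pa}, then the same conclusion can be reached by localizing $P$ at each prime $\fp$, using that $A_\fp$ has finite Krull dimension, so $\FFD(A_\fp)<\infty$ by \cite[Theorem 2.4]{AB58}, applying Theorem \ref{FFD_fin_pa} to conclude $\Z^n(P)_\fp$ is flat for every $\fp$, invoking the local nature of flatness to obtain that each $\Z^n(P)$ is flat, and finishing once more with Neeman's theorem. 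Either route is short and routine.
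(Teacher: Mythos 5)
Your proof is correct and follows the paper's own argument exactly: apply Corollary~\ref{comm_flat_pa} to conclude the complex of projectives is pure acyclic, then invoke the fact (used in the proof of Theorem~\ref{thm_proj}, via \cite[Remark 2.15 and Theorem 8.6]{Nee08} or \cite[Proposition 7.6]{CH15}) that pure acyclic complexes of projectives are null-homotopic over any ring. The alternative localization route you sketch is also valid but redundant, since it merely unwinds the proof of Corollary~\ref{comm_flat_pa}.
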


\begin{proof}
This follows from Corollary \ref{comm_flat_pa} and the proof of Theorem \ref{thm_proj}.
\end{proof}

\begin{cor}\label{cor:fpd-small}
Let $A$ be a left coherent ring with $\fpd(A)<\infty$. Every acyclic bounded below complex of finitely generated projective $A$-modules is null-homotopic.
\end{cor}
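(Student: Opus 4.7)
The plan is to mimic the arguments of \cref{FID_fin_nh} and \cref{thm_proj}, replacing the big projective finitistic dimension with the small one and using left coherence to stay inside the category of finitely presented modules.

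Let $P$ be an acyclic bounded below complex of finitely generated projective $A$-modules, with $P^n = 0$ for $n \ll 0$. Set $s = \fpd(A) < \infty$, and fix $n \in \ZZ$. The goal is to prove that every cycle module $\Z^n(P)$ is projective, since then the short exact sequences $0 \to \Z^n(P) \to P^n \to \Z^{n+1}(P) \to 0$ split and $P$ is null-homotopic.

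First I would observe that $\Z^{n+s}(P)$ is finitely presented: each $P^k$ is finitely generated projective and hence finitely presented, and since $A$ is left coherent the full subcategory of finitely presented $A$-modules is abelian, so the kernel $\Z^{n+s}(P) = \ker(P^{n+s} \to P^{n+s+1})$ is finitely presented. Next, since $P$ is bounded below, the truncation
\[
0 \to P^N \to P^{N+1} \to \cdots \to P^{n+s-1} \to \Z^{n+s}(P) \to 0
\]
is a finite projective resolution of $\Z^{n+s}(P)$, so $\projdim_A(\Z^{n+s}(P)) < \infty$. By the definition of $\fpd(A)$, this forces $\projdim_A(\Z^{n+s}(P)) \le s$. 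Consequently the module $\Z^n(P)$, which appears as an $s$-th syzygy of $\Z^{n+s}(P)$ in the projective resolution formed by the portion of $P$ in degrees $n, n+1, \dots, n+s-1$, must be projective.

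There is no serious obstacle here; the only minor subtlety is ensuring $\Z^{n+s}(P)$ is finitely presented, which is exactly what left coherence delivers. Once this is in place, the syzygy argument proceeds identically to the proof of \cref{FID_fin_nh} (in its projective-module form contained in \cref{thm_proj}), and the splitting of all the short exact sequences $(\dagger)$ gives the null-homotopy of $P$.
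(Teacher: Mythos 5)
Your proof is correct and follows essentially the same route as the paper's, which simply refers to the dual of the argument in \cref{FID_fin_nh} (via \cref{thm_proj}); you have usefully made explicit the point the paper leaves implicit, namely that left coherence is needed to ensure $\Z^{n+s}(P)$ is finitely presented so that the definition of $\fpd(A)$ applies.
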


\begin{proof}
As in the proof of Theorem \ref{thm_proj}, the argument is dual to that of Theorem \ref{FID_fin_nh}.
\end{proof}

\begin{rem}
If $A$ is a commutative noetherian ring (resp. $\opn{FFD}(A)$ is finite), 
then every acyclic complex of projective $A$-modules with at least one flat cycle module must be null-homotopic; this follows from \cref{comm_flat_pa} (resp. \cref{FFD_fin_pa}), 
the resolving
property of flat modules, and the proof of \cref{thm_proj}. In particular, we conclude that every flat Gorenstein projective $A$-module is projective, answering the commutative noetherian case of \cite[Question 3.8]{BCE20}; cf. the proof of \cite[Proposition 3.9]{BCE20}.    
\end{rem}

\bibliographystyle{abbrv}
\bibliography{main}

\end{document}